\documentclass[11pt,fleqn]{amsart} 

\usepackage[usenames,dvipsnames]{xcolor}

\usepackage{amsthm}
\usepackage{amsfonts}
\usepackage[english]{babel}
\usepackage[usenames]{xcolor}
\usepackage{graphicx}
\usepackage{soul}
\usepackage{stfloats}
\usepackage{morefloats}
\usepackage{cite}
\usepackage{lscape}
\usepackage{epstopdf}
\usepackage{braket}
\usepackage[lite]{amsrefs}
\usepackage{mathbbol}
\usepackage{tikz}
\usepackage{tikz-cd}
\usepackage{mathtools}
\usepackage{shuffle}

\setlength{\textwidth}{6.5in}
\setlength{\topmargin}{-0.2in} 
\setlength{\textheight}{9in}
\setlength{\oddsidemargin}{0in}
\setlength{\evensidemargin}{0in}
\usepackage{amsmath,amstext,amsopn,amsfonts,eucal,amssymb}
\usepackage{graphicx,wrapfig,url}

\newcommand\Z{{\mathbb Z}}

\DeclareMathOperator{\tr}{Tr}

\newtheorem{theorem}{Theorem}[section]
\newtheorem{corollary}[theorem]{Corollary}
\newtheorem{lemma}[theorem]{Lemma}
\newtheorem{proposition}[theorem]{Proposition}
\newtheorem{definition}[theorem]{Definition}

\newtheorem{example}[theorem]{Example}

\newtheorem{remark}[theorem]{Remark}

\newcommand{\cal}{\mathcal}

\usepackage{soul}

\setcounter{tocdepth}{1}

\begin{document}

\title{Quantum Cocycle Invariants of Knots
from Yang-Baxter Cohomology} 

\author{Masahico Saito} 
\address{Department of Mathematics and Statistics, 
	University of South Florida, Tampa, FL 33620} 
\email{saito@usf.edu} 

\author{Emanuele Zappala} 
\address{Department of Mathematics and Statistics, Idaho State University\\
	Physical Science Complex |  921 S. 8th Ave., Stop 8085 | Pocatello, ID 83209} 
\email{emanuelezappala@isu.edu}

\maketitle

\begin{abstract}
Yang-Baxter operators (YBOs) have been employed to construct quantum knot invariants. More recently, cohomology theories for YBOs have been independently developed, drawing inspiration from analogous theories for quandles and other discrete algebraic structures. Quandle cohomology, in particular, gives rise to cocycle invariants for knots via $2$-cocycles, which are closely related to quandle extensions,  and knotted surfaces via $3$-cocycles. These quandle cocycle knot invariants have also been shown to admit interpretations as quantum invariants.

Similarly, $2$-cocycles in Yang-Baxter cohomology can be interpreted in terms of deformations of YBOs. Building on these parallels, we introduce quantum cocycle invariants of knots using $2$-cocycles 
of Yang–Baxter cohomology, from the perspective of deformation theory. In particular, we demonstrate that the quandle cocycle invariant can be interpreted in this framework, while the quantum version yields stronger invariants in certain examples. 

We develop our theory along two primary approaches to quantum invariants: via trace constructions and through (co)pairings. Explicit examples are provided, including those based on the Kauffman bracket. Furthermore, we show that both the Jones and Alexander polynomials can be derived within this framework as invariants arising from higher-order formal Laurent polynomial deformations via Yang-Baxter cohomology.
		
\end{abstract}

\date{\empty}

\tableofcontents

\section{Introduction}

The Jones polynomial was originally introduced using von Neumann algebras~\cite{Jones}, and this was later reformulated using Yang-Baxter operators (YBOs), and 
combinatorial techniques on knot diagrams \cite{Kauffman}. 
This discovery has lead to a new area of quantum topology.

Quandles~\cite{Joyce,Matveev} are  discrete self-distributive  systems with  additional conditions,  
that have been introduced before the Jones polynomial. 
Quandle cocycle invariants~\cite{CJKLS} of knots and knotted surfaces have been defined since then, 
and brought a new light to quandles.
	Quandle cocycle invariants were  defined using 2-cocycles of quandle cohomology theory, in a form of state sum
with 2-cocycles as weights, though the states are limited to  quandle colorings.
Applications to various properties of knots and knotted surfaces followed.
Algebraic aspects of quandle homology theory, such as extensions \cite{CENS}, have also been explored,
in parallel to group cohomology theory. 
It was shown~\cite{Grana} that they can be interpreted as quantum invariants,  i.e. as the trace of suitable operators associated to a braid form of a knot.

Quandle homology theory has been generalized to homology theory of set-theoretic Yang-Baxter operators 
\cite{CES}, and further to general Yang-Baxter operators \cite{Eis,Eis1,Lebed,Jozef}.
Homology and cohomology theories for categorical self-distributivity are also developed \cite{cat_SD,coh_deform}. 
Another cohomology theory for YBOs was defined in \cite{SZ-YBH} in such a way that it is unified with Hochschild cohomology, and that follows deformation theory of  braided algebras. This  point of view from deformation theory parallels the fact that quandle 2-cocycles can be used to define extensions of quandles.

Since quandle structures induce YBOs, it is  a natural question whether quandle cocycle knot invariants can be generalized to a quantum version, in the sense 
that it is defined using YBOs instead of discrete structures. This is the goal of the current paper. We construct knot invariants using deformation 2-cocycles from Yang-Baxter cohomology, which  we call quantum cocycle invariamnts.  

There are two main methods to construct knot invariants from YBOs, (1) by using closed braid form and taking trace,
and (2) fixing a height function and assign maps corresponding to local maxima and minima.
We seek both approaches. For (1), well-definedness requires to verify Markov trace property, that ensures invariance under Markov stabilizations, and for (2), maps assigned to maxima and minima need to satisfy additional Reidemeister moves 
involving them. 

Specifically, for both approaches, let $R$ be a given YBO that defines by braid group representation a knot invariant. Take a YBO 2-cocycle $\phi$ of the YB cohomology for $R$ and the deformation $R+\hbar \phi$ (see below). For (1) we take the trace and show that it gives rise to an invariant (Theorem~\ref{thm:inf_YB_inv}), and for (2) we use height function to show well definedness (Theorem~\ref{thm:cupcap}) and present requirements for the 2-cocycles to satisfy for well definedness.
For both approaches, we provide explicit examples of this construction starting from well known $R$, such as tensorized version of quandle operations and Kauffman bracket. 

	We consider higher order deformations of YBOs, where the YB $2$-cocycle is integrable, and construct associated invariants from YBOs of type $R+ \sum_{i=1}^n\hbar^i \phi_i$, where $\phi_i$ are cochains that clear the deformation obstruction in the third YB cohomology group, as described in Section~\ref{sec:higher_def}. This result is formulated in Theorem~\ref{thm:higher}. 

In addition, the approach of generalizing quandle cocycle invariants by means of YB cohomology allows us to answer another natural question. Namely, whether there is an underlying common structure behind quandle cocycle invariants and the Alexander and Jones polynomials. We show that they are all interpreted as the YB cocycle invariants introduced and studied in this article. 

	In fact, we introduce a suitable generalization of the aforementioned higher order invariants to Laurent polynomial series, and then show that it is possible to find pre-YB operators, along with suitable cohomology classes and higher order deformations such that the Jones and Alexander polynomials are the Laurent quantum cocycle invaraints arising from them. The results are formalized in Theorem~\ref{thm:Jones} and Theorem~\ref{thm:Alex}. 

This paper also adds a new relation between algebraic deformation cohomology theories and knot theory.
Diagrammatic methods in knot theory have been applied to algebraic deformation theories. For example, 
in \cite{cat_SD} diagrams are used to construct cohomology theories for categorical self-distributive structures, and in \cite{SZ-YBH,SZ-BC} Yang-Baxter and Hochschild cohomology theories are unified with extensive use of knot and graph diagrams. In this paper, the opposite direction is proposed, namely applying deformation cohomology theories to the construction of knot invariants. Since a number of  knot invariants are defined using algebraic systems, the idea of using their deformation cohomology for constructions of new knot invariants 
may be applicable to such  invariants as well, therefore providing more general applicability of our current results.

This article is organized as follows. Section~\ref{sec:prelim} reviews the two approaches of defining quantum invariants as mentioned above.
In Section~\ref{sec:YB_coh}, 
we review  the  Yang-Baxter homology theory for  low dimensions with diagrammatic conventions,
and then describe a definition in  all higher dimensions.
The definition of the quantum cocycle invariant using the trace approach is given in Section~\ref{sec:inf_def}, following Turaev's formulation.
Concrete examples, including the tensorized quandle cocycle invariant, are presented in Section~\ref{sec:traces}.
The definition for the second approach using maxima and minima is given in Section~\ref{sec:cupcap}, and developed in Section~\ref{sec:bracket}
 for the bracket polynomial with diagrammatic computations.
Up to this section the invarian uses infinitesimal deformation, that is of degree up to 1 with respect to
the deformation parameter $\hbar$. 
The invariant is extended to higher order invariant in Section~\ref{sec:higher_def}, extending degrees with respect to $\hbar$.
In Section~\ref{sec:Jones_Alex}, the higher order invariant is further extended to negative degrees to allow Laurent polynomial expansions,
which provides an interpretation of Jones and Alexander polynomials as such quantum cocycle invariants.

Throughout the paper, the results apply to either a knot or a link, so that we use the convention that a knot $K$ means either a knot or a link.

\section{Preliminaries}\label{sec:prelim}

In this section we review two approaches for defining quantum knot invariants,
one via traces and the other via height functions and maxima/minima.
We follow these two approaches to construct quantum cocycle invariants in later sections.
 Unless otherwise specified, we use the notation $\mathbb k$ for a unital ring, and $V$ for a module over $\mathbb k$.

Recall that for an invertible map $R: V \otimes V \rightarrow V \otimes V$
of a ${\mathbb k}$-module $V$,
the equation
$$(R \otimes {\mathbb 1}) ({\mathbb 1} \otimes R) (R \otimes {\mathbb 1}) 
=({\mathbb 1} \otimes R) (R \otimes {\mathbb 1}) ({\mathbb 1} \otimes R) $$
is called the {\it Yang-Baxter} equation (YBE),  and an invertible solution $R$ of it is called a Yang-Baxter (YB) operator,
or R-matrix. We will also say YBO for short.
YBE is diagrammatically represented as in Figure~\ref{YBE}, where $R$ is represented by a 
 crossing. Diagrams are read from top to bottom, and the right most term of  maps that acts first, corresponds to the top portion of a diagram. For example, the rightmost term $(R \otimes \mathbb 1)$ in the left-hand side of YBE above represents the top left crossing  with a straight line to its right, in the left-hand side of Figure~\ref{YBE}.

If $R$ satisfies the YBE but is not assumed to be invertible, then $R$ is called {\it pre-Yang-Baxter operator},
or pre-YBO for short.
If $R$ is invertible the inverse is denoted by $R^{-1}$, and to indicate that either $R$ or its inverse are used, we employ the notation $R^{\pm}$
for short. In Figure~\ref{YBE}, when all strings are oriented downwards, crossings are defined to be positive.
If $R$ corresponds to a positive crossing, $R^{-1}$ is represented by a negative crossing.

\begin{figure}[htb]
\begin{center}
\includegraphics[width=1in]{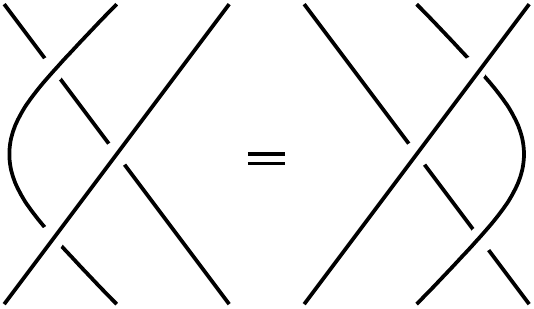}
\end{center}
\caption{}
\label{YBE}
\end{figure}

\subsection{Knot invariants from closed braids and traces}

There are two main constructions of quantum knot invariants from YBOs: one is through braid group representations and their trace maps that are invariant under Markov moves, and the other is by fixing a height function on the plane of projection and incorporate maps corresponding to maxima/minima of knot diagrams.
 Although these two approaches are closely related,  
 we examine these approaches separately to take advantages of computations of known invariants and their deformations. 
In this subsection we review the first approach of trace, and in the next subsection we review the second maxima/minima approach.

Recall \cite{Tur} that an {\it enhanced YBO} (EYBO) is a quadruple $S = (R, \alpha, \beta, \mu)$ consisting of a YBO $R$, two invertible scalars $\alpha, \beta \in \mathbb k^\times$, and a linear map $\mu: V\longrightarrow V$ satisfying the following two compatibility conditions
\begin{eqnarray}
		(\mu\otimes \mu) R &=& R (\mu\otimes \mu),\label{eqn:mu_comm}\\
		\tr_2(R^{\pm}(\mu\otimes \mu)) &=& \alpha^\pm\beta\mu, \label{eqn:partial_tr}
\end{eqnarray}
where $\tr_2$ indicates the partial trace of an operator with respect to its second tensorand. To shorten notation, in this article we also indicate an EYBO simply by $R$, when no risk of confusion arises. Observe that when $\mu$ is invertible, Equation~\eqref{eqn:partial_tr} is equivalent to
\begin{eqnarray}
		\tr_2(R^{\pm}(\mathbb 1\otimes \mu)) = \alpha^{\pm}\beta \mathbb 1,
\end{eqnarray}
 where $\mathbb 1$ denotes the identity mapping. 
Using EYBOs, Turaev \cite{Tur} gave a definition of quantum invariants of knots and links as follows.
Let $\mathbb B_m$ denote the $m$-string braid group, whose elements are represented by vertical string diagrams
with crossings. 
A knot or a link $K$ is represented as a closed braid form (Alexander's theorem), where 
the closure is to connect the top and bottom end points of a braid by parallel nested circular arcs to the right.
Let $b_m  \in \mathbb B_m$ denote a braid whose closure represents $K$. Diagrams in closed braid form are oriented downwards on the braid portion unless otherwise stated. 
Let $w(b_m)$ denote the writhe  of $b_m$, that is the number of positive crossings (positive braid generators) minus the negative ones. For an EYBO $S=(R, \alpha, \beta, \mu)$, a knot invariant $T_S(K)$ is defined by

\begin{eqnarray}
		T_S(K) = \alpha^{w(b_m)} \beta^{-m}\tr(b_m \mu^{\otimes m}). 
\end{eqnarray}

The well definedness is proved by showing its invariance under Markov moves, that are conjugations and (de)stabilizations.

\subsection{Knot invariants using maxima/minima}
In this section we review a construction of knot invariants using maxima/minima of knot diagrams with a fixed height function (see, for example, \cite{K&P}). 
 This approach is used for unoriented knots and their diagrams.

A pairing $\cup : V \otimes V \rightarrow \mathbb{k} $ and 
a copairing $\cap: {\mathbb k} \rightarrow V \otimes V$ in a  module $V$ over a unital ring $\mathbb{ k}$ are said to have (or satisfy) the {\it switchback property}
if they satisfy the equalities
$$(\cup \otimes {\mathbb 1}) ({\mathbb 1} \otimes \cap)={\mathbb 1} 
= ({\mathbb 1} \otimes \cup ) ( \cap  \otimes {\mathbb 1} ). $$

Let $R$ be a YBO. 
A pairing $\cup$ is said to satisfy (or have)  the {\it passcup}  property  with respect to $R$ if it satisfies 
\begin{eqnarray*}
({\mathbb 1} \otimes \cup)(R \otimes  {\mathbb 1}) = ( \cup \otimes  {\mathbb 1} )( {\mathbb 1}  \otimes R^{-1} ), \\
({\mathbb 1} \otimes \cup)(R^{-1} \otimes  {\mathbb 1}) = ( \cup \otimes  {\mathbb 1} )( {\mathbb 1}  \otimes R).
\end{eqnarray*}
Similarly a copairing $\cap$ is said to satisfy (or have) the {\it passcap}  property  with respect to $R$ if it satisfies 
\begin{eqnarray*}
(R \otimes  {\mathbb 1})  ({\mathbb 1} \otimes \cap)= ( {\mathbb 1}  \otimes R^{-1} ) ( \cap \otimes  {\mathbb 1} ), \\
(R^{-1} \otimes  {\mathbb 1})  ({\mathbb 1} \otimes \cap)= ( {\mathbb 1}  \otimes R) ( \cap \otimes  {\mathbb 1} ).
\end{eqnarray*}
In Figure~\ref{moves}, the switchback, passcup and passcap moves are depicted from left to right, respectively.

\begin{figure}[htb]
\begin{center}
\includegraphics[width=6in]{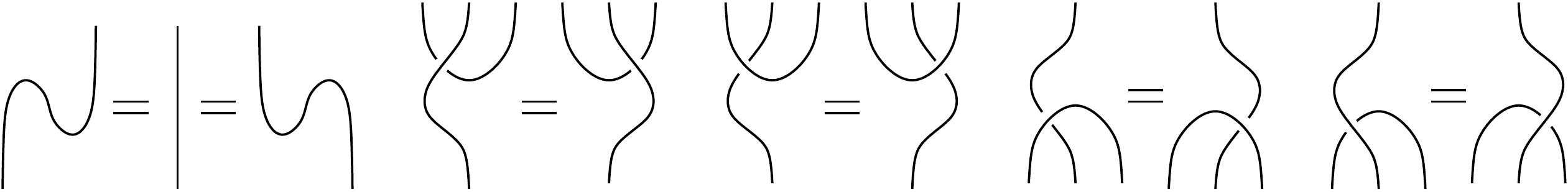}
\end{center}
\caption{}
\label{moves}
\end{figure}

A ${\mathbb k}$-module $V$ with a YBO and a pairing and copairing that satisfy switchback, passcup, and passcap properties provide a knot invariant up to regular isotopy equivalence relation without type I Reidemeister move, see \cite{K&P}, 
by interpreting a given knot diagram as a linear map from 
${\mathbb k}$ to ${\mathbb k}$, corresponding to empty to empty strings, and 
the linear map is a constant multiple, which defines  a desired knot invariant.
 See for example, \cite{K&P}.
If in addition the equation $\cup R=\cup$ and $R \cap =\cap$  
are satisfied, this procedure provides a knot invariant up to isotopy. 
Alternatively, one examines the effect of type I Reidemeister move to the regular isotopy invariant, and makes a normalization by the writhe of a given diagram when oriented.

\section{Yang-Baxter cohomology}\label{sec:YB_coh}

In this section we introduce a YB cohomology theory which differs in defining formulas from the ones studied in \cite{Eis1,Eis,Lebed,Jozef}. 
In particular, the main difference  from \cite{Eis1,Eis} lies in  the fact that the inverse of the YB operator is not used, and it can therefore be defined for pre-YB operators (i.e. not necessarily invertible solutions to the YBE). 

\subsection{YB cohomology in low dimensions}

In this subsection we review YB cohomology theory from \cite{SZ-YBH} in low dimensions, as it is used in the several sections that follow.

The cochain groups are defined by 
$C^0_{\rm YB}(V,V)=0$ and 
$C^n_{\rm YB}(V,V)={\rm Hom} (V^{\otimes n},V^{\otimes n})$ for $n>0$.
We define the differentials for $f \in C^1_{\rm YB}(V,V) $
and $\phi \in C^2_{\rm YB}(V,V)$ by 
\begin{eqnarray*}
	\delta^1_{\rm YB} (f) 
	&=&
	R ( f \otimes {\mathbb 1}) + R ( {\mathbb 1} \otimes  f ) 
	-  ( f \otimes {\mathbb 1}) R - ( {\mathbb 1} \otimes  f )  R ,\\
	\delta^2_{\rm YB} (\phi) &=&
	(R \otimes {\mathbb 1} ) ( {\mathbb 1}  \otimes R ) ( \phi \otimes  {\mathbb 1} )
	+ (R \otimes {\mathbb 1} ) ( {\mathbb 1}  \otimes \phi ) ( R \otimes  {\mathbb 1} )
	+  (\phi \otimes {\mathbb 1} ) ( {\mathbb 1}  \otimes R ) ( R \otimes  {\mathbb 1} ) \\
	& & -  ( {\mathbb 1}  \otimes R ) ( R \otimes  {\mathbb 1} ) ( {\mathbb 1}  \otimes \phi ) 
	- ( {\mathbb 1}  \otimes R ) ( \phi \otimes  {\mathbb 1} ) ( {\mathbb 1}  \otimes  R ) 
	- ( {\mathbb 1}  \otimes \phi ) ( R \otimes  {\mathbb 1} ) ( {\mathbb 1}  \otimes  R ) .
\end{eqnarray*}

 When the YBO $R$ needs to be specified, we also use the notation $\delta^n_{R}$
for $\delta^n_{\rm YB}$. 
The fact that these define homology theory  is proved in the next section.
The corresponding cocycle, coboundary, homology groups are denoted as usual using $Z, B, H$, respectively.

\begin{figure}[htb]
\begin{center}
\includegraphics[width=2in]{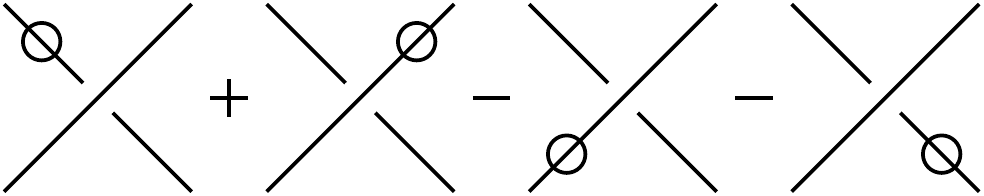}
\end{center}
\caption{}
\label{YBdiff1}
\end{figure}

\begin{figure}[htb]
\begin{center}
\includegraphics[width=3in]{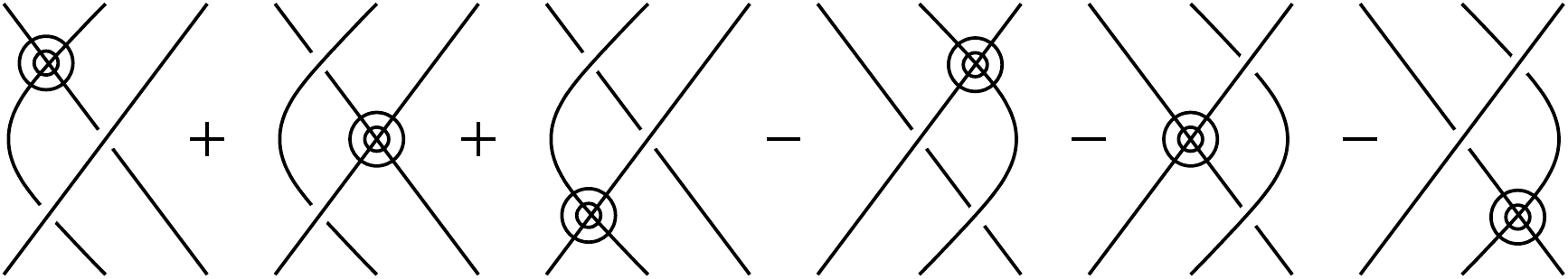}
\end{center}
\caption{}
\label{YBd2}
\end{figure}

The expression for $ \delta^1_{\rm YB} (f) $ is depicted in Figure~\ref{YBdiff1}, where YBOs are represented by 
crossings and  1-cocycles $f$ are represented by  circles on edges.
The expression for $ \delta^2_{\rm YB} (\phi) $ is depicted in Figure~\ref{YBd2}, where 2-cocycles $\phi$ are represented by double circles.
The first positive terms in the figure correspond to the left-hand side of the YBE, while the negative terms to the right-hand side.
The three terms for  each is obtained by replacing a crossing by a double circle at exactly one place.

We cite the following lemma that relates 2-cocycles to deformations.

\begin{lemma}[\cite{SZ-YBH}] \label{lem:YB2toR}
Let $R: V \rightarrow V $ be a YBO. 
Let $\phi \in C^2_{\rm YB} (V,V)$ be a YB 2-cochain for $R$.
Then, $\tilde R = R + \hbar \phi$ is a YBO on $\tilde V =  \mathbb k [[ \hbar ]] / (\hbar^2)  \otimes V $ if and only if  $\phi \in Z^2_{\rm YB} (V,V)$. 
\end{lemma}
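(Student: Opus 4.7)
The plan is to substitute $\tilde R = R + \hbar \phi$ directly into the Yang--Baxter equation on $\tilde V$ and compare terms order by order in $\hbar$, exploiting the fact that $\hbar^2 = 0$ in $\tilde V$ so the expansion truncates.

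First I would dispose of invertibility. Since $R$ is invertible on $V$ and any element of the form $R + \hbar \phi$ on $\tilde V$ has explicit two-sided inverse $R^{-1} - \hbar R^{-1} \phi R^{-1}$ (this is checked by direct multiplication, using $\hbar^2 = 0$), the operator $\tilde R$ is automatically invertible on $\tilde V$. Thus $\tilde R$ is a YBO if and only if it satisfies the YBE.

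Next I would plug $\tilde R = R + \hbar \phi$ into
\[
(\tilde R \otimes {\mathbb 1})({\mathbb 1} \otimes \tilde R)(\tilde R \otimes {\mathbb 1}) = ({\mathbb 1} \otimes \tilde R)(\tilde R \otimes {\mathbb 1})({\mathbb 1} \otimes \tilde R)
\]
and expand both sides, discarding all terms of order $\hbar^2$ or higher. The $\hbar^0$ terms on each side give the YBE for $R$, which holds by hypothesis and hence cancels. The $\hbar^1$ terms on the left-hand side are the three summands obtained by replacing exactly one of the three factors $R$ by $\phi$, and similarly for the right-hand side. Moving everything to one side, the $\hbar^1$ part of LHS $-$ RHS is
\[
(R \otimes {\mathbb 1})({\mathbb 1} \otimes R)(\phi \otimes {\mathbb 1}) + (R \otimes {\mathbb 1})({\mathbb 1} \otimes \phi)(R \otimes {\mathbb 1}) + (\phi \otimes {\mathbb 1})({\mathbb 1} \otimes R)(R \otimes {\mathbb 1})
\]
\[
- ({\mathbb 1} \otimes R)(R \otimes {\mathbb 1})({\mathbb 1} \otimes \phi) - ({\mathbb 1} \otimes R)(\phi \otimes {\mathbb 1})({\mathbb 1} \otimes R) - ({\mathbb 1} \otimes \phi)(R \otimes {\mathbb 1})({\mathbb 1} \otimes R),
\]
which is exactly $\delta^2_{\rm YB}(\phi)$ as defined above. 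Hence the YBE for $\tilde R$ holds modulo $\hbar^2$ if and only if $\delta^2_{\rm YB}(\phi) = 0$, i.e., $\phi \in Z^2_{\rm YB}(V,V)$.

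The only real care needed is bookkeeping: one must check that the three ``replace one $R$ by $\phi$'' terms on each side match the six summands of $\delta^2_{\rm YB}(\phi)$ with the correct signs and the correct ordering of tensor factors. This is essentially the diagrammatic statement behind Figure~\ref{YBd2}: the two Reidemeister III configurations each contribute three terms with one crossing replaced by a double circle, the left-hand configuration positively and the right-hand configuration negatively. Once this correspondence is spelled out, both directions of the ``if and only if'' follow simultaneously, completing the proof.
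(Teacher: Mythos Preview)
Your proposal is correct and follows exactly the approach indicated in the paper's one-line proof: expand the YBE for $\tilde R$ and observe that the $\hbar^1$ terms constitute $\delta^2_{\rm YB}(\phi)$. You have simply filled in the details (including the invertibility check) that the paper leaves implicit.
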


\begin{proof}
One computes the YBE for $\tilde R$, and checks that the terms for $\hbar $ constitutes the YB 2-cocycle condition. 
\end{proof}

 This YBO  $\tilde R =  R + \hbar \phi$ on $\tilde V$ is called an {\it infinitesimal deformation } of $R$.

\subsection{ YB cohomology in higher dimensions}  

  In this subsection we review definitions of YB cohomology theory from \cite{SZ-YBH},
see also \cite{SZ-BC}.

We set $\sigma_{n,i} = \mathbb 1^{\otimes (i-1)}\otimes R \otimes \mathbb 1^{\otimes(n-i-1)}$. Then, for $\phi \in {\rm Hom}(V^{\otimes n}, V^{\otimes n})$, the partial
 differentials for YB cohomology are defined as
\begin{eqnarray*}
	d_{{\rm YB},i}^n(\phi) &=& \sigma_{n+1,n+1-i}\cdots\sigma_{n+1,2}\sigma_{n+1,1}(\mathbb 1\otimes \phi)\sigma_{n+1,1}\cdots\sigma_{n+1,i-1}\sigma_{n+1,i}\\
	&& - \sigma_{n+1,i+1}\cdots\sigma_{n+1,n}\sigma_{n+1,n+1}(\phi \otimes \mathbb 1)\sigma_{n+1,n+1}\cdots\sigma_{n+1,i+2}\sigma_{n+1,i+1}.
\end{eqnarray*}
The partial differentials $d_{{\rm YB},i}^n$ are graphically depicted as in Figure~\ref{fig:partial_YB}.
\begin{figure}[htb]
	\begin{center}
		\includegraphics[width=2.8in]{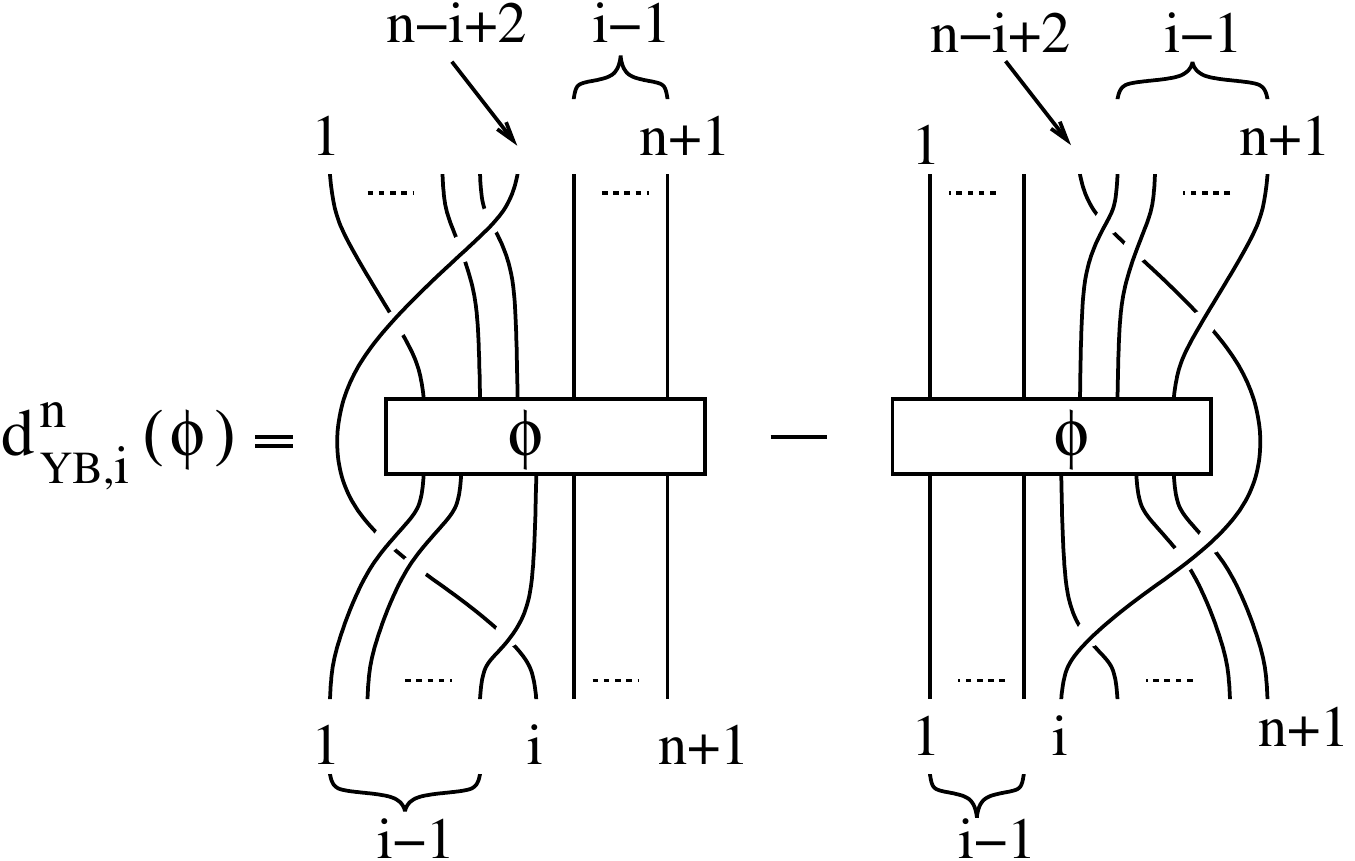}
	\end{center}
	\caption{}
	\label{fig:partial_YB}
\end{figure}

\begin{definition}[\cite{SZ-YBH}] \label{def:YB_coh_def}
		{\rm 
				We define the YB cohomology complex of $V$, with YBO $R$, as the cochain groups $C^n(V,V) := {\rm Hom}_{\mathbb k}(V^{\otimes n},V^{\otimes n})$ and differentials $ \partial^n_{\rm YB} := \sum_{i=1}^n (-1)^id_{{\rm YB},i}^n$ as an alternating sum of partial differentials. The associated cohomology is called {\it Yang-Baxter cohomology} and denoted $H^n_{\rm YB}(V,R)$.
		}
\end{definition}

\begin{lemma}[\cite{SZ-YBH}]  \label{lem:YB_complex}
			The YB cohomology complex of Definition~\ref{def:YB_coh_def} is indeed a cochain complex.
\end{lemma}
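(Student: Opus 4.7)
The plan is to reduce the lemma to pre-cosimplicial identities between the partial differentials and then invoke the standard telescoping argument for alternating sums. Concretely, one shows
$$d^{n+1}_{{\rm YB},j}\,d^n_{{\rm YB},i} \;=\; d^{n+1}_{{\rm YB},i}\,d^n_{{\rm YB},j-1} \qquad \text{for all admissible } i < j,$$
expands
$$\partial^{n+1}_{\rm YB}\,\partial^n_{\rm YB} \;=\; \sum_{i,j}(-1)^{i+j}\, d^{n+1}_{{\rm YB},j}\,d^n_{{\rm YB},i},$$
and splits the double sum into the $i<j$, $i=j$, $i>j$ parts. Substituting the identity in the $i<j$ terms and reindexing $j\mapsto j-1$ produces pairwise cancellation with the $i>j$ terms; the diagonal $i=j$ contribution cancels by inspection. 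This step is formal and mirrors the proof that the alternating sum of cofaces in a cosimplicial object is a differential.

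The actual content is the verification of the pre-cosimplicial identities. Each $d^n_{{\rm YB},i}$ decomposes as a difference of a \emph{left} summand of the shape $(\prod\sigma)\,(\mathbb 1\otimes\phi)\,(\prod\sigma)$ and a \emph{right} summand of the shape $(\prod\sigma)\,(\phi\otimes\mathbb 1)\,(\prod\sigma)$, in which each $\prod\sigma$ is a prescribed run of crossings $\sigma_{N,k}$. Composing $d^{n+1}_{{\rm YB},j}$ with $d^n_{{\rm YB},i}$ therefore expands into four summand-by-summand pieces, each of which must be rearranged to match the corresponding piece of $d^{n+1}_{{\rm YB},i}\,d^n_{{\rm YB},j-1}$. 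Two elementary moves suffice: the far-commutation relation $\sigma_{N,k}\sigma_{N,\ell}=\sigma_{N,\ell}\sigma_{N,k}$ for $|k-\ell|\ge 2$, and the Yang-Baxter equation itself, written in the braid form $\sigma_{N,k}\sigma_{N,k+1}\sigma_{N,k}=\sigma_{N,k+1}\sigma_{N,k}\sigma_{N,k+1}$. Diagrammatically, and in the spirit of Figure~\ref{fig:partial_YB}, a partial differential inserts a doubly-circled strand in a prescribed slot and braids it past the existing strands; the identity says that two such insertions at positions $i$ and $j$ may be performed in either order modulo braid moves, which is visually compelling.

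The main obstacle I expect is the adjacent case $j=i+1$, where the YBE itself is essential; all other index pairs separate far enough that only the far-commutation is needed, and the verification reduces to index bookkeeping. A secondary difficulty lies in the \emph{mixed} sub-cases, in which one picks the left summand on one side of the identity and the right summand on the other: here the required rearrangement is asymmetric, and one must transport $\phi$ across a long run of crossings while matching the $\sigma_{n+1,k}$-indices arising from the inner differential with the $\sigma_{n+2,k}$-indices arising from the outer one. My approach would be to handle each sub-case using the diagrams of Figure~\ref{fig:partial_YB} as a guide, using the picture to see which single braid move is being invoked at each step, and only afterward recording the resulting algebraic identity in compact notation.
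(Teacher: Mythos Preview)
Your approach is exactly the paper's: reduce to the pre-cosimplicial identity $d^{n+1}_{{\rm YB},j}d^n_{{\rm YB},i}=d^{n+1}_{{\rm YB},i}d^n_{{\rm YB},j-1}$ for $i<j$ and verify it diagrammatically via far-commutation and the YBE (the paper simply points to Figure~\ref{fig:YBdjdi}). One small bookkeeping correction: the diagonal terms $d^{n+1}_{{\rm YB},i}d^n_{{\rm YB},i}$ do not vanish by themselves; the correct split is $j>i$ versus $j\le i$, and after applying the identity and reindexing $j\mapsto j-1$ the first block becomes exactly the negative of the second, absorbing the diagonal.
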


		To prove the result it is sufficient  to show that whenever $i<j$ the equality $d_{{\rm YB},j}^{n+1}d_{{\rm YB},i}^n = d_{{\rm YB},i}^{n+1}d_{{\rm YB},j-1}^n$ holds. 
		We include a diagrammatic argument  in Figure~\ref{fig:YBdjdi}  to show this fact.

\begin{figure}[htb]
	\begin{center}
		\includegraphics[width=5in]{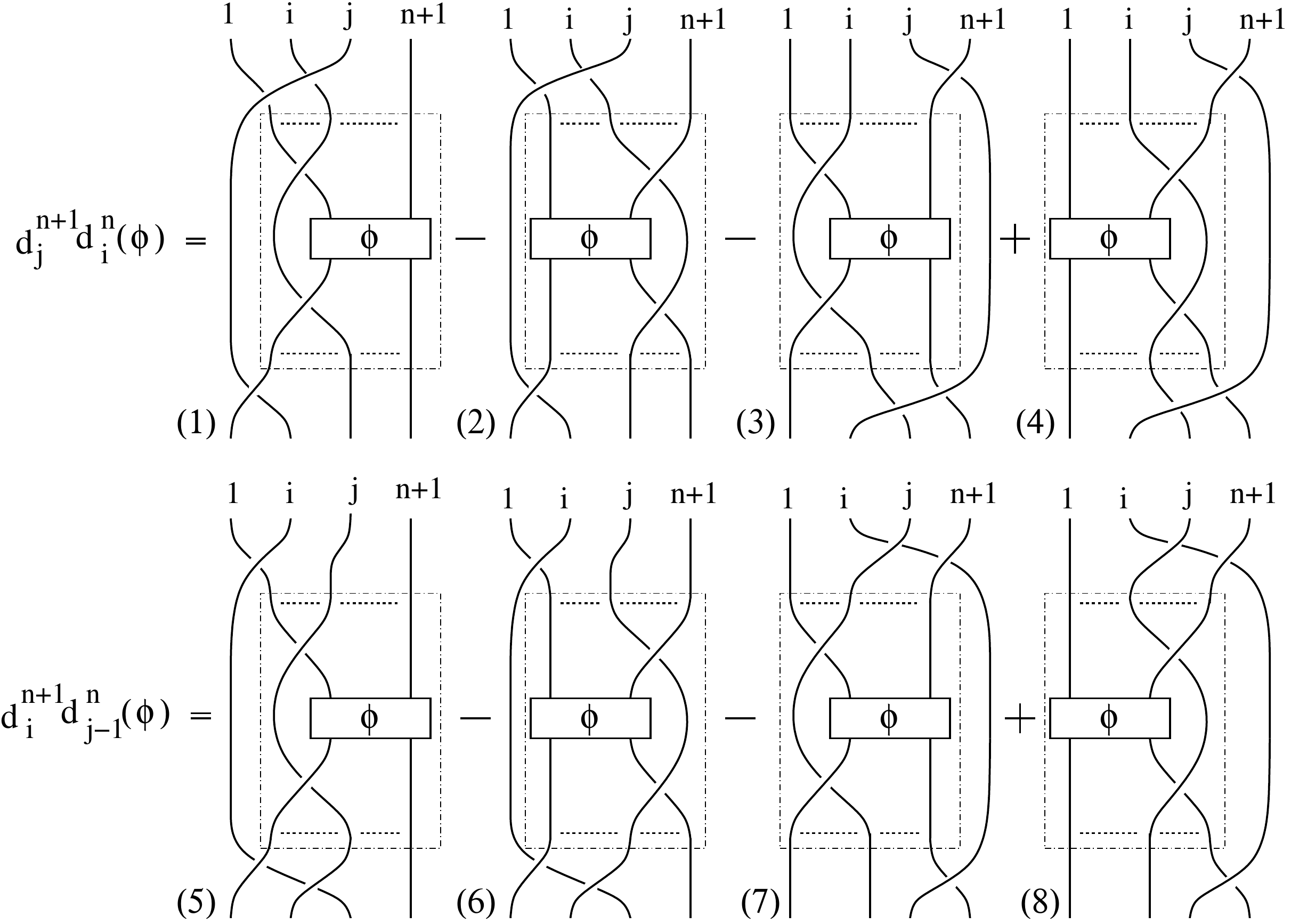}
	\end{center}
	\caption{}
	\label{fig:YBdjdi}
\end{figure}

A natural question to ask at this point is whether the two cohomology theories, from Definition~\ref{def:YB_coh_def} and \cite{Eis}, are related or not. In low degrees, it turns out that when $R$ is invertible (i.e. it is a YBO rather than being just a pre-YBO) the cohomology groups are isomorphic. In the following, we denote by 
$\hat H^n_{\rm YB}(V,R)$ the YB cohomology groups of  \cite{Eis1,Eis}, and  their differentials by $\hat d_{\rm YB}^n$. 
	We recall the following definition, found in \cite{SZ-YBH}. 
		\begin{definition}
			{\rm 
				Let $R : V^{\otimes 2} \longrightarrow V^{\otimes 2}$ be a Yang-Baxter operator over the  $\mathbb k$-module $V$. 
				An operator  $\tilde R: \tilde V^{\otimes 2} \longrightarrow \tilde V^{\otimes 2}$, where  $\tilde V := \mathbb k[[\hbar]]/(\hbar^2)\otimes V$,
				is said to be a {\it Yang-Baxter deformation} of 
				$R$,
				or YB deformation for short, if it satisfies $[R - \tilde R] (V^{\otimes 2}) \subset \hbar V^{\otimes 2}$. 
				Two YB deformations $R_1$ and $R_2$ of $R$ are said to be equivalent if there exists a mapping $f: V \longrightarrow V$ such that 
				\begin{eqnarray*}
						R_2 = R_1 ( f \otimes {\mathbb 1}) + R_1 ( {\mathbb 1} \otimes  f ) 
						-  ( f \otimes {\mathbb 1}) R_1 - ( {\mathbb 1} \otimes  f )  R_1. 
				\end{eqnarray*}
			}
		\end{definition}
		We observe that this definition coincides with the one found in \cite{Eis1}, see Definition~5 therein, upon taking the ideal $\frak m = (\hbar)$. We lastly recall that in \cite{Eis,Eis1} and \cite{SZ-YBH} it has been shown that the group of YB deformations is isomorphic to the respective second cohomology groups.  See for instance Proposition~27 in \cite{Eis1}, and Theorem~3.9 in \cite{SZ-YBH} where the proof is given for the more general Yang-Baxter-Hochschild cohomology.

\begin{proposition}
		Suppose that $R$ is a YBO. Then, we have
		\begin{eqnarray*}
				H^1_{\rm YB}(V,R) &\cong& \hat H^1_{\rm YB}(V,R)\\
				H^2_{\rm YB}(V,R) &\cong& \hat H^2_{\rm YB}(V,R).
		\end{eqnarray*}
\end{proposition}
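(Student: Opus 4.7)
The plan is to exploit the deformation-theoretic interpretation of both cohomology theories in degree~$2$, and to handle degree~$1$ via a direct comparison of cocycle conditions. Lemma~\ref{lem:YB2toR} together with the given formula for $\delta^1_{\rm YB}$ shows that $H^2_{\rm YB}(V,R)$ is canonically isomorphic to the group of equivalence classes of infinitesimal YB deformations of $R$, and Proposition~27 of \cite{Eis1} establishes the analogous statement for $\hat H^2_{\rm YB}(V,R)$. Since the definition of equivalence recorded just before the proposition is the \emph{same} in both theories (namely $R_2 = R_1 + \hbar\,\delta^1_{\rm YB}(f)$ infinitesimally, which matches Eisermann's definition upon specializing $\mathfrak m = (\hbar)$), composing the two classifying isomorphisms yields the degree-$2$ isomorphism.

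For $H^1$, since the cochain group $C^0$ vanishes in both theories one has $H^1 = Z^1$ in each case, and the task reduces to identifying the kernels of $\delta^1_{\rm YB}$ and $\hat\delta^1_{\rm YB}$ as submodules of $\Hom(V,V)$. The condition $\delta^1_{\rm YB}(f) = 0$ is exactly $[R,\, f\otimes\mathbb 1 + \mathbb 1\otimes f] = 0$, whereas Eisermann's cocycle condition has a similar shape but is phrased through $R^{-1}$ (or as a pair of quasi-commutativity relations). I would construct an explicit bijection between the two $1$-cocycle submodules by pre- or post-composing Eisermann's cocycle relations with $R$ and $R^{-1}$ to reassemble them into the combined commutator identity, and conversely; the invertibility of $R$, which is the standing hypothesis of the proposition, is what makes both directions legitimate. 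Diagrammatic translation along the lines of Figure~\ref{YBdiff1} should make the bookkeeping transparent.

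The hard part will be the precise matching of the $1$-cocycle conditions. If Eisermann phrases his cocycle condition as two separate commutativity-type relations while the new complex combines them into a single commutator identity, then invertibility of $R$ must be used to recover both separate relations from the combined one, and not merely to derive one implication. A safer alternative, should the algebraic manipulation become delicate, is to bootstrap from degree~$2$: characterize $1$-cocycles in both theories as $1$-cochains whose induced infinitesimal deformation of $R$ is trivial (i.e., whose image in the appropriate space of deformations vanishes), and transfer this characterization through the degree-$2$ isomorphism already obtained. In either approach, the essential point is that the comparison must be a bijection, and this bijectivity is precisely what the existence of $R^{-1}$ supplies.
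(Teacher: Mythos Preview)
Your proposal is correct and coincides with the paper's argument. For $H^2$ the paper invokes exactly the deformation-theoretic classifications you cite, and for $H^1$ it carries out precisely your composition idea, recording the single relation $\partial^1_{\rm YB}(f) = \hat d^1_{\rm YB}(f)\,R$ so that invertibility of $R$ identifies the two $1$-cocycle submodules (your alternative bootstrap via degree~$2$ is therefore unnecessary).
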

\begin{proof}
			Applying $R$ to the differential $\hat d^1_{\rm YB}$ of \cite{Eis}, we find the relation $\partial^1_{\rm YB}(f) = \hat d^1_{\rm YB}(f)R$, for any $1$-cochain $f$. Since $R$ is invertible, we find a 1-1 correspondence between $1$-cocycles of $\partial^1_{\rm YB}$ and $1$-cocycles of $\hat d^1_{\rm YB}$. Since $\partial^0_{\rm YB}$ and $\hat d^0_{\rm YB}$ are the same, this completes the proof of the first isomorphism. 
			To show the second, we just use the fact that both cohomology groups are isomorphic to the group of YB infinitesimal deformations, as shown in \cite{Eis1} and \cite{SZ-YBH}. 
			The groups of deformations of YBOs are the same between the two by taking $\frak m$ in \cite{Eis1} to be $(\hbar)$ in our notation.
			This completes the proof.
\end{proof}

	The relation between $H^n_{\rm YB}(V,R)$ and $\hat H^n_{\rm YB}(V,R)$ remains unclear
	for $n\geq 3$. This does not affect the current paper.

\section{Quantum cocycle invariants  via traces}\label{sec:inf_def}

Let $S = (R, \alpha, \beta, \mu)$ be an EYBO. 
Suppose that there is an infinitesimal deformation 
$$\tilde R=\phi_0 + \hbar \phi_1=R + \hbar \phi
\quad {\rm on} \quad 
\tilde V={\mathbb k} [[ \hbar ]] / (\hbar^2 )  \otimes V . $$
Suppose further  that there is $\tilde \mu = \mu + \hbar \mu_1 : \tilde V \rightarrow \tilde V$ such that
$\tilde S = (\tilde R, \alpha, \beta, \tilde \mu ) $ is an EYBO on $\tilde V$.
Then we say that $\tilde S$ is  an {\it infinitesimal deformation} of an EYBO $S$, or $S$ is deformed to $\tilde S$.  In this situation we also denote the deformation by the pair $(\phi,\mu_1)$, 
and call it an {\it enhanced} Yang-Baxter $2$-cocycle, or EYB $2$-cocycle for short.
We also  simply say that $\phi$ is an EYB $2$-cocycle when $\mu_1$ is understood from context.

\begin{definition}\label{def:deg1_operator}
	{\rm 
			Let $R$ be a YB operator
			with inverse $R^{-1}$, 
			and let $\phi$ be a YB $2$-cocycle. 
			Let $b_m$ be an $m$-braid. Then, we associate an operator 
			$\Psi_\phi(b_m)$  
			 to the braid 
			 as follows. We consider the operator 
			$\tilde R = \phi_0 + \hbar \phi_1$ with $\phi_0=R$ 
			 on 
			 $\tilde V = {\mathbb k} [[ \hbar ]] / (\hbar^2 ) \otimes V$ 
			 	 as above. 
			Observe that $\tilde R$ is invertible with inverse $\tilde R^{-1} = \hat \phi_0 + \hbar \hat \phi_1 $, 
				 where $\hat \phi_0 = R^{-1}$ and $\hat \phi_1 = -R^{-1}\phi_1R^{-1}$. 
				For $R$ and/or $R^{-1}$ we use the notation $R^{\pm}$, and similarly for $\tilde R^{\pm}$.
				 
				 To each standard  generator (and  its inverse) $\sigma^{\pm}_i$, $i=1,\ldots, m-1$, of the braid group $\mathbb  B_m$,
			 we associate the operator $\sigma_i^{\pm}  \mapsto \mathbb 1^{\otimes ( i-1)}\otimes \tilde R^\pm \otimes \mathbb 1^{\otimes (m-i-1)}$. Then, from a presentation of $b_m$ as a product of $\sigma_i^{\pm}$'s we obtain an operator obtained as composition of terms of type $\mathbb 1^{\otimes (i-1)}\otimes \tilde R^\pm \otimes \mathbb 1^{\otimes (m-i-1)}$, which we denote 
			  $\Psi_\phi(b_m)$. 
			 We let $\Psi^i_\phi(b_m)$, $i=0,1$, be defined by 
			 $$
			 \Psi_\phi(b_m) = \Psi^0_\phi(b_m) + \hbar \Psi^1_\phi(b_m) \in 
			 {\rm Hom}(\tilde{V}^{\otimes m},\tilde{V}^{\otimes m}).
			 $$
			
	}
\end{definition}

We use the following notation.

\begin{definition}\label{def:gamma}
	{\rm 
		For given non-negative integers $k$ and $n$, set 
\begin{eqnarray*}
					\Gamma_n^m &=& \{(j_1,\ldots,j_m )\ |\  j_1+\cdots +j_m = n, \  j_\ell \geq 0 \}, \\
					\hat\Gamma_n^m  &=& \{(j_1,\ldots,j_m )\ |\  j_1+\cdots +j_m= n, \  j_\ell \geq 0\ {\rm and}\  j_\ell \neq n,\ {\rm for\ all}\ \ell=1,\ldots,m \}.
\end{eqnarray*}
When $m$ is apparent, we also use $\Gamma_n=\Gamma_n^m$ and $\hat\Gamma_n=\hat\Gamma_n^m$. For $n=1$ we may also omit the subscript. 
}
\end{definition}

\begin{theorem}\label{thm:inf_YB_inv}
Let $S = (R, \alpha, \beta, \mu)$ be an EYBO on $V$. Let $\Gamma=\Gamma^3_1$. 
Then there exists  an infinitesimal deformation $\tilde R=\phi_0 + \hbar \phi_1=R + \hbar \phi$
and $\tilde \mu = \mu + \hbar \mu_1 $ on $\tilde V= {\mathbb k} [[ \hbar ]] / (\hbar^2 )  \otimes V $ such that
$\tilde S = (\tilde R, \alpha, \beta, \tilde \mu ) $ is an EYBO on $\tilde V$ 
if and only if the following equations hold:
	\begin{eqnarray}
			\sum_{(i,j,k)\in \Gamma} (\mu_i\otimes \mu_j)\phi_k &=& \sum_{(i,j,k)\in \Gamma} \phi_i (\mu_j\otimes \mu_k),\label{eqn:inf_mu_comm}\\
			\sum_{(i,j,k)\in \Gamma}\tr_2(\phi_i(\mu_j\otimes \mu_k)) &=& \alpha\beta \mu_1,\label{eqn:inf_partial_tr+}\\
			\sum_{(i,j,k)\in \Gamma}\tr_2(\hat \phi_i(\mu_j\otimes \mu_k)) &=& \alpha^{-1}\beta \mu_1,\label{eqn:inf_partial_tr-}
	\end{eqnarray}
	where $\hat \phi_0 = \phi_0^{-1}$, and $\hat \phi_1 = -\phi_0^{-1}\phi_1\phi_0^{-1}$. 
	
	Moreover, if $\tilde S$ is an EYBO and 
		$b_m$ is an $m$-braid  representing a knot $K$, the quantity 
	\begin{eqnarray}
		\Psi^{1 }_{\tilde S}(b_m) = 			\sum_{(i,j_1,\ldots,j_m)\in \Gamma^{m+1}} \alpha^{-w(b_m)}\beta^{-n}\tr (\Psi^i_\phi(b_m)( \mu_{j_1}\otimes \cdots \otimes \mu_{j_m})) \label{eqn:inv_deg1}
	\end{eqnarray} 
does not depend on  the choice of $b_m$ used, and it is invariant under Markov moves. It is therefore a knot invariant. 
\end{theorem}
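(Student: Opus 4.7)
The plan is to split the theorem into two essentially independent tasks. First, I would characterize the constraints coming from imposing the EYBO axioms on $\tilde S$ over the base ring $\tilde{\mathbb k} := \mathbb k[[\hbar]]/(\hbar^2)$; this gives the if-and-only-if assertion. Second, I would observe that once this is done, $\tilde S$ is an honest EYBO on $\tilde V$, so Turaev's construction reviewed in Section~\ref{sec:prelim} applies verbatim over $\tilde{\mathbb k}$, yielding a knot invariant in $\tilde{\mathbb k}$ whose $\hbar$-coefficient is precisely $\Psi^1_{\tilde S}(b_m)$.

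For the first task, I would substitute $\tilde R = \phi_0 + \hbar\phi_1$ (with $\phi_0 = R$) and $\tilde\mu = \mu_0 + \hbar\mu_1$ (with $\mu_0 = \mu$) into the compatibility conditions~\eqref{eqn:mu_comm} and~\eqref{eqn:partial_tr}, and expand modulo $\hbar^2$. The $\hbar^0$ coefficients reproduce the EYBO axioms on the undeformed $S$ and are automatic. The $\hbar^1$ coefficient of~\eqref{eqn:mu_comm} collects exactly the terms $(\mu_i\otimes\mu_j)\phi_k$ with $i+j+k=1$ on each side, which is~\eqref{eqn:inf_mu_comm}; the $\hbar^1$ coefficient of~\eqref{eqn:partial_tr} with the positive sign yields~\eqref{eqn:inf_partial_tr+}, and with the negative sign yields~\eqref{eqn:inf_partial_tr-}. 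For the latter I would first verify by direct multiplication that $\tilde R^{-1} = \hat\phi_0 + \hbar\hat\phi_1$ with $\hat\phi_0 = R^{-1}$ and $\hat\phi_1 = -R^{-1}\phi_1 R^{-1}$, which is the truncated Taylor expansion of $(R+\hbar\phi_1)^{-1}$ and is essentially immediate. The underlying YBE on $\tilde R$ is not among the listed equations because, by Lemma~\ref{lem:YB2toR}, it is equivalent to $\phi$ being a YB $2$-cocycle, which is implicit in the hypothesis that $\tilde R$ is a YBO on $\tilde V$.

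For the invariance of~\eqref{eqn:inv_deg1}, I would apply Turaev's theorem directly to the EYBO $\tilde S$ over the $\tilde{\mathbb k}$-module $\tilde V$, producing the Markov-invariant quantity
\begin{equation*}
T_{\tilde S}(K) \;=\; \alpha^{w(b_m)}\, \beta^{-m}\, \tr\bigl(\Psi_\phi(b_m)\, \tilde\mu^{\otimes m}\bigr) \;\in\; \tilde{\mathbb k}.
\end{equation*}
Since $\tilde{\mathbb k} \cong \mathbb k \oplus \hbar\mathbb k$ as $\mathbb k$-modules, writing $T_{\tilde S}(K) = T^0 + \hbar T^1$ shows that $T^0$ and $T^1$ are each separately invariants of $K$. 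Using the decompositions $\Psi_\phi(b_m) = \Psi^0_\phi(b_m) + \hbar\Psi^1_\phi(b_m)$ from Definition~\ref{def:deg1_operator} and $\tilde\mu^{\otimes m} = \sum \hbar^{j_1+\cdots+j_m}\mu_{j_1}\otimes\cdots\otimes\mu_{j_m} \pmod{\hbar^2}$, the degree-one contribution of the trace is precisely the sum over $(i,j_1,\ldots,j_m)\in\Gamma_1^{m+1}$ appearing in~\eqref{eqn:inv_deg1}, so $T^1 = \Psi^1_{\tilde S}(b_m)$ up to the sign convention on $\alpha^{w(b_m)}$. The only nontrivial bookkeeping is to track the inverse expansion $\hat\phi_0 + \hbar\hat\phi_1$ through each negative braid generator of $b_m$, so that the degree-one piece is collected correctly; the Markov invariance itself is then inherited for free from the EYBO structure on $\tilde V$, and is the main reason for routing the argument through Turaev's framework rather than checking Markov moves term by term.
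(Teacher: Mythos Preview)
Your proposal is correct and follows essentially the same approach as the paper: expand the EYBO axioms for $\tilde S$ modulo $\hbar^2$ (after noting the formula for $\tilde R^{-1}$) to obtain \eqref{eqn:inf_mu_comm}--\eqref{eqn:inf_partial_tr-}, and then apply Turaev's theorem over $\tilde{\mathbb k}$ and extract the $\hbar$-coefficient to get the invariant~\eqref{eqn:inv_deg1}. Your write-up is in fact more explicit than the paper's own proof, which is quite terse, and your remark on the sign convention for $\alpha^{w(b_m)}$ is a legitimate observation about an inconsistency in the statement.
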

\begin{proof}
			We prove the first statement. Observe that given 
 			$ \tilde R = \phi_0 + \hbar \phi_1$, its inverse is given by 
			$\tilde R^{-1} =  \hat \phi_0 + \hbar \hat \phi_1$, up to quadratic terms in $\hbar$, as one might verify directly. 
			The characterization of 
			 the conditions on $\tilde S$ being an EYBO module up to degree 2 gives, quotienting out higher degrees in $\hbar$,
			Equation~\eqref{eqn:mu_comm} and Equation~\eqref{eqn:partial_tr}.
	
			For the second statement, since 
			$\tilde S$ 
			is an EYBO on $\tilde V= \mathbb k[[\hbar]]/(\hbar^2) \otimes  V $,  			using classical results of Turaev in \cite{Tur}, it follows that the trace of $\alpha^{-w(D)}\beta^{-n}\Psi_\phi(b_m)$ is a knot invariant. 
			Since the summation $\sum_{(i,j_1,\ldots,j_m)\in \Gamma^{m+1}}$ collects degree 1 terms with respect to $\hbar$, one sees that it gives precisely the term in \eqref{eqn:inv_deg1}.
\end{proof}

\begin{definition}\label{def:qcocy}
	{\rm 
		From the second statement of this theorem, we denote the Expression~(\ref{eqn:inv_deg1})
	by $\Phi^1_{\tilde S}( K)$, or $\Phi^1_\phi (K)$ 
	when other data for $\tilde S$ are understood, 
	without an explicit reference to the braid $b_m$ used to obtain the operator $\Psi^1_{\tilde S} (b_m)$, as the trace is independent of this choice. 
	 We also denote the Turaev's invariant for the undeformed $S=(R,  \alpha, \beta, \mu )$ 
		by $\Phi^0_\phi( K)$, and combine both by the expression
		$\Phi_\phi(K) = \Phi^0_\phi(K) + \hbar \Phi^1_\phi(K)$. 
				}
\end{definition}

		We now consider the effect of deforming an EYBO by a YB coboundary. 
		
		\begin{lemma}\label{lem:cob_EYBO}
					Let $(R,\alpha,\beta,\mu)$ denote an EYBO, and let $\phi = \delta^1_{\rm YB}(f)$ be a cobounded YB $2$-cocycle. Then, $(\tilde R, \alpha, \beta, \tilde \mu)$ is an EYBO, where $\tilde R = R + \hbar \phi$, and $\tilde \mu = \mu + \hbar \mu_1$, with $\mu_1 = \mu f - f\mu$. 
		\end{lemma}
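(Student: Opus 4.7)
The plan is to recognize the coboundary deformation $\tilde R = R + \hbar\, \delta^1_{\rm YB}(f)$ as an infinitesimal conjugation. Set $F := \mathbb 1 - \hbar f$ acting on $\tilde V = \mathbb k[[\hbar]]/(\hbar^2)\otimes V$; then $F$ is invertible with $F^{-1} = \mathbb 1 + \hbar f$ modulo $\hbar^2$. A direct expansion, keeping only terms of degree at most one in $\hbar$, gives
\begin{eqnarray*}
(F\otimes F)\, R\, (F^{-1}\otimes F^{-1})
&=& R + \hbar\bigl[ R(f\otimes\mathbb 1) + R(\mathbb 1\otimes f) - (f\otimes\mathbb 1)R - (\mathbb 1\otimes f)R \bigr] \\
&=& R + \hbar\phi \;=\; \tilde R,
\end{eqnarray*}
and similarly $F\mu F^{-1} = \mu + \hbar(\mu f - f\mu) = \tilde \mu$. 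Thus, modulo $\hbar^2$, the entire deformed datum is obtained from $(R,\mu)$ by the simultaneous similarity transformations $R \mapsto (F\otimes F) R (F\otimes F)^{-1}$ and $\mu \mapsto F\mu F^{-1}$.

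With this reformulation, each of the EYBO axioms for $\tilde S$ reduces to the corresponding axiom for $S$. Since $\phi$ is a coboundary it is automatically a cocycle, so Lemma~\ref{lem:YB2toR} already guarantees that $\tilde R$ is a YBO; one may alternatively observe directly that $\tilde R$ is a conjugate of $R$. The $\mu$-commutation Equation~\eqref{eqn:mu_comm} is preserved by any simultaneous conjugation: a short manipulation using $(\tilde\mu\otimes\tilde\mu) = (F\otimes F)(\mu\otimes\mu)(F^{-1}\otimes F^{-1})$ and $(\mu\otimes\mu)R = R(\mu\otimes\mu)$ gives $(\tilde\mu\otimes\tilde\mu)\tilde R = \tilde R(\tilde\mu\otimes\tilde\mu)$. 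For the partial trace condition, I would write $F\otimes F = (F\otimes\mathbb 1)(\mathbb 1\otimes F)$, pull the outer $F$ and $F^{-1}$ outside $\tr_2$, and use cyclicity of $\tr_2$ in its second tensor factor to cancel the inner $\mathbb 1\otimes F$ against $\mathbb 1\otimes F^{-1}$, yielding
$$\tr_2\bigl(\tilde R^\pm(\tilde\mu\otimes\tilde\mu)\bigr) \;=\; F\cdot \tr_2\bigl(R^\pm(\mu\otimes\mu)\bigr)\cdot F^{-1} \;=\; F(\alpha^\pm\beta\,\mu)F^{-1} \;=\; \alpha^\pm\beta\,\tilde\mu.$$
This is the full EYBO trace identity for $\tilde S$, and its degree-one part in $\hbar$ subsumes both Equations~\eqref{eqn:inf_partial_tr+} and \eqref{eqn:inf_partial_tr-} from Theorem~\ref{thm:inf_YB_inv}.

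The only bookkeeping subtlety is to verify that $\tilde R^{-1} = (F\otimes F)R^{-1}(F^{-1}\otimes F^{-1})$ modulo $\hbar^2$, which is immediate by multiplying with $\tilde R$ and reducing mod $(\hbar^2)$, and to ensure all manipulations respect the quotient ring. I do not foresee a substantive obstacle: the conjugation viewpoint reduces the lemma essentially to the tautology that EYBO data is preserved under simultaneous similarity. A more computational alternative would be to unpack Equations~\eqref{eqn:inf_mu_comm}--\eqref{eqn:inf_partial_tr-} with $\phi_0=R$, $\phi_1 = \delta^1_{\rm YB}(f)$, and $\mu_1 = \mu f - f\mu$, and verify them by direct expansion using cyclicity of $\tr_2$ and $(\mu\otimes\mu)R = R(\mu\otimes\mu)$ to collapse the resulting sums term by term; this is markedly less transparent, so I would present the conjugation argument and relegate the direct verification to a remark.
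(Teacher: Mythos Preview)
Your proof is correct, and the conjugation viewpoint is clean: setting $F=\mathbb 1-\hbar f$ and observing $\tilde R=(F\otimes F)R(F\otimes F)^{-1}$, $\tilde\mu=F\mu F^{-1}$ reduces all three EYBO axioms to their undeformed counterparts in one stroke. The only facts used are the ones the paper already records, namely that $\tr_2$ commutes with left/right multiplication by $g\otimes\mathbb 1$ and is cyclic with respect to operators of the form $\mathbb 1\otimes g$.

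This is a genuinely different route from the paper's. There the three conditions \eqref{eqn:inf_mu_comm}--\eqref{eqn:inf_partial_tr-} are verified by direct expansion: the $\mu$-commutation is checked term by term, and for the trace conditions the paper first computes $-R^{-1}\phi R^{-1}=\delta^1_{R^{-1}}(f)$ to reduce the negative case to the positive one for the EYBO $(R^{-1},\alpha^{-1},\beta,\mu)$, and then carries out an explicit trace computation using cyclicity and $(\mu\otimes\mu)R=R(\mu\otimes\mu)$. Your argument packages all of this into a single similarity transformation and handles the $\pm$ cases simultaneously; the paper's computation is longer but has the virtue of making the identity $\hat\phi_1=\delta^1_{R^{-1}}(f)$ explicit, which is reused in the proof of Proposition~\ref{prop:cob}. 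You may want to record that identity separately if you adopt the conjugation proof.
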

		\begin{proof}
					We first show \eqref{eqn:inf_mu_comm}, i.e. $\tilde R(\tilde \mu\otimes \tilde \mu) = (\tilde \mu\otimes \tilde \mu)\tilde R$. We need to show that the following equation holds
					\begin{eqnarray*}
							\phi(\mu\otimes \mu) + R(\mu_1\otimes \mu) + R(\mu\otimes \mu_1) &=& (\mu\otimes \mu)\phi + (\mu_1\otimes \mu)R + (\mu\otimes \mu_1)R. 
					\end{eqnarray*}
		For the left-hand side, using
		 the defining formula of  $\phi = \delta^1_{\rm YB}(f)$ and 
		 the fact that $R$ and $(\mu\otimes \mu)$ commute by assumption, we compute
		\begin{eqnarray*}
				\phi(\mu\otimes \mu) + R(\mu_1\otimes \mu) + R(\mu\otimes \mu_1) &=& R(f\mu\otimes \mu) + R(\mu\otimes f\mu) - (f\mu\otimes \mu)R - (\mu\otimes f\mu)R \\
				&& + R(\mu f\otimes \mu)  - R(f\mu \otimes \mu) + R(\mu\otimes \mu f) - R(\mu\otimes f\mu).
		\end{eqnarray*}
		Similarly, for the right-hand side we obtain
		\begin{eqnarray*}
				(\mu\otimes \mu)\phi + (\mu_1\otimes \mu)R + (\mu\otimes \mu_1)R &=& R(\mu f\otimes \mu) + R(\mu\otimes \mu f) - (\mu f\otimes \mu)R - (\mu\otimes \mu f)R\\
				&& + (\mu f\otimes \mu)R - (f\mu \otimes \mu)R + (\mu \otimes \mu f)R - (\mu\otimes f\mu)R. 
		\end{eqnarray*}
		The required equality is therefore seen to hold.

		We now consider \eqref{eqn:inf_partial_tr+} and \eqref{eqn:inf_partial_tr-}, i.e. we need to show the equalities
		\begin{eqnarray*}
				\tr_2(\phi(\mu\otimes \mu)) + \tr_2(R(\mu_1\otimes \mu)) + \tr_2(R(\mu\otimes \mu_1)) &=& \alpha\beta \mu_1 , \\
				\tr_2(\hat\phi(\mu\otimes \mu)) + \tr_2(R^{-1}(\mu_1\otimes \mu)) + \tr_2(R^{-1}(\mu\otimes \mu_1)) &=& \alpha^{-1}\beta \mu_1,
		\end{eqnarray*}
		where $\hat\phi := -R^{-1}\phi R^{-1}$ is the infinitesimal component of the inverse of $\tilde R$. Observe that
		\begin{eqnarray*}
				R^{-1}\phi R^{-1} 
				&=& R^{-1}\delta^1_{\rm YB}(f) R^{-1}   \\
				&=& R^{-1}R(f\otimes \mathbb 1)R^{-1} + R^{-1}R(\mathbb 1\otimes f)R^{-1} - R^{-1}(f\otimes \mathbb 1)RR^{-1} + R^{-1}R(\mathbb 1\otimes f)RR^{-1}\\
				&=& (f\otimes \mathbb 1)R^{-1} + (\mathbb 1\otimes f)R^{-1} - R^{-1}(f\otimes \mathbb 1)+ R^{-1}R(\mathbb 1\otimes f)\\
				&=& - \delta^1_{  R^{-1} } (f),  
		\end{eqnarray*}
		where $\delta^1_{ R^{-1}}(f)$ is the first 
		 YB differential of the operator $R^{-1}$ applied to $f$. 

		Therefore,  if $(R,\alpha, \beta,\mu)$ is an EYBO, then  we have that $(R^{-1},\alpha^{-1},\beta,\mu)$ is also an EYBO. 
		Hence it is sufficient to prove \eqref{eqn:inf_partial_tr+}, 
		since \eqref{eqn:inf_partial_tr-} is equivalent to \eqref{eqn:inf_partial_tr+} for the EYBO $(R^{-1},\alpha^{-1},\beta,\mu)$, with  $\hat \phi =  \hbar \delta^1_{ R^{-1}}(f)$ 
		and $\tilde \mu= \mu +\hbar\mu_1$.		
		For the left-hand side of \eqref{eqn:inf_partial_tr+} we have 
		\begin{eqnarray*}
				\lefteqn{\tr_2(\phi(\mu\otimes \mu)) + \tr_2(R(\mu_1\otimes \mu)) + \tr_2(R(\mu\otimes \mu_1))}\\ 
				&=& \tr_2[R(f\otimes \mathbb 1)(\mu\otimes \mu) + R(\mathbb 1\otimes f)(\mu\otimes \mu)
				 - (f\otimes \mathbb 1)R(\mu\otimes \mu) - (\mathbb 1\otimes f)R(\mu\otimes \mu)\\
				 && \hspace{1cm} + R(\mu f\otimes \mu) - R(f\mu \otimes \mu) + R(\mu\otimes \mu f) - R(\mu\otimes f\mu)]\\
				 &=& \tr_2[R(f\otimes \mathbb 1)(\mu\otimes \mu) + R(\mathbb 1\otimes f)(\mu\otimes \mu)
				 - (f\otimes \mathbb 1)(\mu\otimes \mu)R - (\mathbb 1\otimes f)(\mu\otimes \mu)R\\
				 && \hspace{1cm} + R(\mu f\otimes \mu) - R(f\mu \otimes \mu) + R(\mu\otimes \mu f) - R(\mu\otimes f\mu)]\\
				 &=& -\tr_2[(f\otimes \mathbb 1)(\mu\otimes \mu)R] 
				 + \tr_2[R(\mu\otimes \mu)(f\otimes \mathbb 1)]\\
				 && - \tr_2[(\mathbb 1\otimes f)(\mu\otimes \mu)R] 
				 + \tr_2[R(\mu\otimes \mu)(\mathbb 1\otimes f)]\\
				 &=& -f\tr_2[(\mu\otimes \mu)R] 
					 + \tr_2[R(\mu\otimes \mu)]f - \tr_2[(\mathbb 1\otimes f)(\mu\otimes \mu)R] 
			+ \tr_2[R(\mu\otimes \mu)(\mathbb 1\otimes f)]\\
				&=& -f\alpha\beta \mu
				+ \alpha\beta\mu f\\
				 &=& \alpha\beta[\mu f-f\mu]\\
				 &=& \alpha\beta\mu_1,   
		\end{eqnarray*}
		where we have used \eqref{eqn:mu_comm} and \eqref{eqn:partial_tr} for $R$ and $\mu$, 
		along with 
		\begin{eqnarray*}
				- \tr_2[(\mathbb 1\otimes f)(\mu\otimes \mu)R] + \tr_2[R(\mu\otimes \mu)(\mathbb 1\otimes f)]=0,
		\end{eqnarray*}
		 by the cyclic property of the partial trace with respect to maps that factor as $\mathbb 1\otimes f$, and 
		 \begin{eqnarray*}
		 		\tr_2[(f\otimes 1)(\mu\otimes \mu)R] &=& f\tr_2((\mu\otimes \mu)R), \\
		 		\tr_2[R(\mu\otimes \mu)(f\otimes 1)] &=& \tr_2[R(\mu\otimes \mu)]f.
		 \end{eqnarray*}
		 This completes the proof. 
	\end{proof}

	The following is an analogue, in our setting, of the result for quandle invariants that states that the invariant of a coboundary is the number of quandle colorings, see \cite{CJKLS}.

	 \begin{proposition} \label{prop:cob}
	 			Let $K$ be a knot. Let $(R,\alpha,\beta,\mu)$ be an EYBO, let $\phi := \delta^2_{\rm YB}(f)$ be a coboundary for some $1$-cochain $f$, and set $\mu_1 = \mu f - f\mu$. Then, we have $\Phi_\phi(K) = \Phi_\phi^0(K)$. In other words, the infinitesimal part of the quantum cocycle invariant vanishes, $\Phi_\phi^1(K)=0$.  
	 \end{proposition}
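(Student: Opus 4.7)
The plan is to identify the deformed data $(\tilde R, \tilde \mu)$ as a gauge conjugate of the undeformed $(R,\mu)$ modulo $\hbar^2$, and then invoke cyclicity of the trace. Concretely, I set $F := \mathbb 1 + \hbar f \in \mathrm{End}(\tilde V)$; this is invertible with $F^{-1} = \mathbb 1 - \hbar f$ since $\hbar^2 = 0$ in $\tilde V$. A direct expansion then yields
$$F^{-1}\mu F \;\equiv\; \mu + \hbar(\mu f - f\mu) \;=\; \tilde \mu, \qquad (F\otimes F)^{-1} R (F\otimes F) \;\equiv\; R + \hbar\, \delta^1_{\rm YB}(f) \;=\; \tilde R \pmod{\hbar^2},$$
and taking inverses of the second identity gives $(F\otimes F)^{-1} R^{-1}(F\otimes F) \equiv \tilde R^{-1} \pmod{\hbar^2}$ as well, in agreement with Definition~\ref{def:deg1_operator}.

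I would next lift this gauge interpretation to the braid operator. For each generator $\sigma_i^{\pm}$ of $\mathbb B_m$, the associated operator $\mathbb 1^{\otimes(i-1)} \otimes \tilde R^{\pm} \otimes \mathbb 1^{\otimes(m-i-1)}$ coincides, modulo $\hbar^2$, with $(F^{\otimes m})^{-1}\bigl(\mathbb 1^{\otimes(i-1)} \otimes R^{\pm} \otimes \mathbb 1^{\otimes(m-i-1)}\bigr) F^{\otimes m}$: one factors $F^{\otimes m}$ as the product of an operator supported on slots $(i,i+1)$, which realizes the gauge transformation of $R^{\pm}$, and an operator supported on the remaining slots, which commutes with the generator. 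Composing over the generators of $b_m$, the intermediate $F^{\otimes m}$ and $(F^{\otimes m})^{-1}$ cancel telescopically, so
$$\Psi_\phi(b_m) \;\equiv\; (F^{\otimes m})^{-1}\, \Psi^0_\phi(b_m)\, F^{\otimes m} \pmod{\hbar^2},$$
where $\Psi^0_\phi(b_m)$ is the undeformed braid operator built from $R^{\pm}$. Separately, $\tilde \mu^{\otimes m} = (F^{-1}\mu F)^{\otimes m} = (F^{\otimes m})^{-1}\, \mu^{\otimes m}\, F^{\otimes m}$ modulo $\hbar^2$, since tensor powers distribute over composition of operators in disjoint slots.

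Combining the two displays and using cyclicity of the ordinary trace,
$$\tr\bigl(\Psi_\phi(b_m)\, \tilde \mu^{\otimes m}\bigr) \;\equiv\; \tr\bigl((F^{\otimes m})^{-1}\, \Psi^0_\phi(b_m)\, \mu^{\otimes m}\, F^{\otimes m}\bigr) \;=\; \tr\bigl(\Psi^0_\phi(b_m)\, \mu^{\otimes m}\bigr) \pmod{\hbar^2}.$$
Multiplying by $\alpha^{-w(b_m)} \beta^{-m}$ yields $\Phi_\phi(K) \equiv \Phi^0_\phi(K) \pmod{\hbar^2}$, and therefore the coefficient of $\hbar$, namely $\Phi^1_\phi(K)$, vanishes. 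The only real bookkeeping obstacle is verifying that the generator-level gauge identity composes correctly over the whole braid; once that is done, the conclusion follows by cyclicity alone, with no appeal needed to the trace identities of EYBOs beyond what Lemma~\ref{lem:cob_EYBO} already guarantees.
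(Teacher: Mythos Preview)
Your argument is correct and notably cleaner than the paper's own proof. The paper proceeds combinatorially: it expands $\Psi^1_\phi(b_m)$ as a sum over crossings, observes that on each \emph{internal} arc of the braid the two adjacent contributions of $f$ (one from each bounding crossing via the expansion of $\delta^1_{\rm YB}(f)$) cancel in pairs, and is then left with $f$-terms on the \emph{external} (top and bottom) arcs only. Those remaining terms are matched against the $\mu_1$-terms using the cyclic property of the trace and the specific identity $\mu_1 = \mu f - f\mu$, giving a telescoping cancellation in the final display.

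Your approach packages all of this into a single gauge transformation: the identification $\tilde R^{\pm} = (F\otimes F)^{-1} R^{\pm}(F\otimes F)$ and $\tilde\mu = F^{-1}\mu F$ modulo $\hbar^2$ globalizes immediately to $\Psi_\phi(b_m)\,\tilde\mu^{\otimes m} = (F^{\otimes m})^{-1}\,\Psi^0_\phi(b_m)\,\mu^{\otimes m}\,F^{\otimes m}$, and one application of trace cyclicity finishes. The paper's internal-arc cancellations are exactly the telescoping of $F^{\otimes m}(F^{\otimes m})^{-1}$ between consecutive generators, and its external-arc computation is the cyclicity step applied to the outermost $F^{\otimes m}$ together with the $\tilde\mu$ factor. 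What your version buys is conceptual clarity (coboundaries are precisely infinitesimal gauge equivalences, and traces are gauge-invariant) and brevity; what the paper's diagrammatic version buys is that the same arc-by-arc cancellation picture is reused verbatim in the maxima/minima setting (Proposition~\ref{prop:cupcapcob}), where the analogue of your $F^{\otimes m}$-conjugation is less immediate because there are no external arcs and the (co)pairings must be deformed as well.
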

 	\begin{proof}
 			Applying Lemma~\ref{lem:cob_EYBO}, it follows that $\Phi_\phi(K)$ is well defined.
 			Let $b_m$ denote a braid on $m$-string whose closure is $K$. We 
			show that 
			$$\alpha^{w(b_m)}\beta^m\Phi_\phi^1(K) = \sum_{(i,j_1,\ldots, j_m)\in \Gamma^{m+1}} \tr(\Psi_\phi^i(b_m)(\mu_{j_1}\otimes \cdots \otimes \mu_{j_m})) =0. $$

			 In this proof, we will refer to the arcs of $b_m$ 
			 that are bounded by crossings (regardless of whether the arc in question is an over or under arc at the end crossings), and {\it external} otherwise. External arcs have at least one end at the top or bottom of the braid, and  internal arcs do not.			 
			We claim that the only terms in the operator $\Psi_\phi^1(b_m)$ that do not vanish are those where $\phi$ appears in a position such that it has either external incoming arcs, or external outgoing arcs. In fact, if an arc is internal, then it takes part in two consecutive crossings. 
			The term $\Psi_\phi^1$ consists of the braid $b_m$ where each crossing is replaced by $R^\pm$, except for one of the crossings where $\phi$ appears. Moreover, $\phi$ appears exactly once in each position. If we consider an internal arc, then there are two terms in the sum defining $\Psi_\phi^1$ where $\phi$ appears, once in the top crossing and once in the bottom crossing. 
			 There are three different types of configurations associated to this situation. Namely, we can have $b_m = b'_m\sigma_i^\pm\sigma_i^\pm b''_m$, or $b_m = b'_m \sigma_i^\pm\sigma_{i+1}^\pm b''_m$, or $b_m = b'_m\sigma_{i+1}^\pm\sigma^\pm_ib''_m$.

	  \begin{figure}[htb]
	\begin{center}
		\includegraphics[width=3.8in]{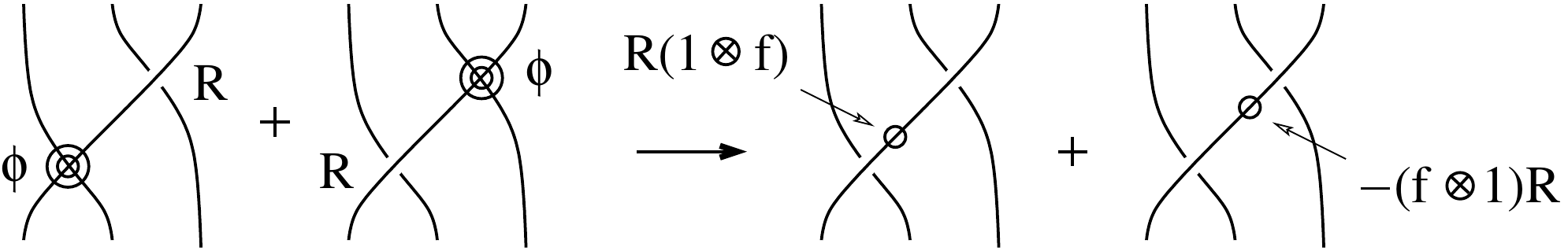}
	\end{center}
	\caption{}
	\label{cancel}
\end{figure}

			 First, we consider the case where all crossings are positive.
			 We consider only the case $b_m = b'_m\sigma_i\sigma_{i+1}b''_m$, as the other two are substantially the same, and we show the fact that the contribution to $\Psi^1_\phi(b_m)$ vanishes for the terms where $\phi$ appears in the position of $\sigma_i$, or $\sigma_{i+1}$. 
 			These terms correspond to the sum 
 			\begin{eqnarray*}
 					\Psi_\phi^0(b'_m)\circ (\phi\otimes \mathbb 1)(\mathbb 1\otimes R)\circ  \Psi_\phi^0(b''_m) + \Psi_\phi^0(b'_m)\circ (R\otimes \mathbb 1)(\mathbb 1\otimes \phi)\circ  \Psi_\phi^0(b''_m),
 			\end{eqnarray*}
 			which is part of the sum defining $\Psi_\phi^1(b_m)$.
			In the left-hand side of Figure~\ref{cancel}, these two terms are depicted. 
				Since 
			$$\phi = \delta^2_{\rm YB}(f) = R(f\otimes \mathbb 1) + R(\mathbb 1\otimes f) - (f\otimes \mathbb 1)R - (\mathbb 1\otimes f)R, $$ 
			 we can pair the elements  corresponding to $R(f\otimes \mathbb 1)$ associated to 
			 $\Psi_\phi^0(b'_m)\circ (\phi\otimes \mathbb 1)(\mathbb 1\otimes R)\circ  \Psi_\phi^0(b''_m), $
			  and  $- (\mathbb 1\otimes f)R$ in 
			  $\Psi_\phi^0(b'_m)\circ (R\otimes \mathbb 1)(\mathbb 1\otimes \phi)\circ  \Psi_\phi^0(b''_m). $
			  These terms are seen to cancel. 
			  In the right-hand side of Figure~\ref{cancel} these 1-cochains are depicted as circles on the corresponding edges. The two terms represented by circles on the middle edge
			  are the canceling pair. 
			 The cancelations of 1-cocycles on arcs are also seen in diagrams of Figure~\ref{YBdiff1}, where circles from a single YB 2-cocycle are distributed to each adjacent edge, and two circles coming from crossings that bound each internal arc represent canceling terms.
             This diagrammatic interpretation was also used in \cite{CJKLS} for quandle cocycle invariants.
			
			 Since, as shown in Lemma~\ref{lem:cob_EYBO}, $\phi^{-1} = \delta^2_{R^{-1}}(f)$, it follows that the computation is substantially the same when $\sigma_i$ is replaced by $\sigma_i^{-1}$, upon changing $R$ with $R^{-1}$. So, the same calculation as above can be applied to the cases where a negative crossing appears, and we see that the terms all cancel.
			  We 
			  proceed in the same manner for each internal arc, therefore pairing all the terms arising from each instance of $\phi$ appearing in the positions of the crossings of $b_m$, and cancel all the terms where $f$ appears on internal arcs. The only terms that are potentially nonzero in $\Psi^1_\phi(b_m)$ are therefore those in which $f$ appears on external arcs.
	
						By adding canceling pairs $\sigma_i \sigma_i^{-1}$ if necessary, we may assume that all strings cross at least another string of the braid. 
			Further, observe that all instances where $f$ appears as a top arc have positive sign, while all instances where $f$ appears on a bottom arc have negative sign. Therefore, we have
 			\begin{eqnarray*}
 					\lefteqn{\sum_{(i,j_1,\ldots, j_m)\in \Gamma^{m+1}} \tr(\Psi_\phi(b_m)(\mu_{j_1}\otimes \cdots \otimes \mu_{j_m}))}\\
 					&=& \tr(\Psi^1_\phi(b_m)(\mu_0\otimes \cdots \otimes \mu_0)) + \sum_{(j_1,\ldots, j_m)\in \Gamma^m} \tr(\Psi^0_\phi(b_m)(\mu_{j_1}\otimes \cdots \otimes \mu_{j_m}))\\
 					&=& \sum_{i=1}^m \tr[\Psi^0_\phi(b_m)(\mu_0^{\otimes (i-1)  }\otimes f\mu_0\otimes \mu_0^{\otimes (m-i-1) }) - (\mathbb 1^{\otimes (i-1) }\otimes f\otimes \mathbb 1^{\otimes ( m-i-1) })\Psi^0_\phi(b_m)(\mu_0 \otimes \cdots \otimes \mu_0)]\\
 					&& + \sum_{i=1}^m \tr(\Psi^0_\phi(b_m)(\mu_0^{\otimes (i-1) }\otimes \mu_1\otimes \mu_0^{\otimes (m-i-1) })) 
					\end{eqnarray*}
					\begin{eqnarray*}
 					&\stackrel{(A)}{=} & \sum_{i=1}^m \tr[\Psi^0_\phi(b_m)(\mu_0^{\otimes (i-1)}\otimes f\mu_0\otimes \mu_0^{\otimes (m-i-1)}) - \Psi^0_\phi(b_m)(\mu_0^{\otimes (i-1) }\otimes \mu_0f\otimes \mu_0^{\otimes (m-i-1) })]\\
 					&& + \sum_{i=1}^m \tr(\Psi^0_\phi(b_m)(\mu_0^{\otimes (i-1)}\otimes \mu_1\otimes \mu_0^{\otimes (m-i-1)  }))\\
 					&=& - \sum_{i=1}^m \tr(\Psi^0_\phi(b_m)(\mu_0^{\otimes (i-1)}\otimes \mu_1\otimes \mu_0^{\otimes (m-i-1) })) + \sum_{i=1}^m \tr(\Psi^0_\phi(b_m)(\mu_0^{\otimes (i-1)}\otimes \mu_1\otimes \mu_0^{\otimes (m-i-1)}))\\
 					&=& 0
 			\end{eqnarray*}
 where the equality $(A)$ follows from the cyclic property of the partial trace.
 \end{proof}

\section{Examples   via traces  }\label{sec:traces}

In this section we provide examples that yield nontrivial values of the quantum cocycle invariant defined in the preceding section via traces.

\subsection{Invariants from deformations of the transposition}
			Let us consider a finite dimensional vector space $V$ along with the transposition map $R=\tau : V\otimes V \rightarrow V\otimes V$, which is a YBO on $V$. 
						
\begin{lemma} 		For $R=\tau$, we have 	
$H^2_{\rm YB}(V, V) = C^2_{\rm YB}(V, V) = {\rm Hom} (V^{\otimes 2}, V^{\otimes 2})$.
\end{lemma}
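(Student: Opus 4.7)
The plan is to establish two equalities separately: that every $2$-cochain is a cocycle (so $Z^2_{\rm YB}(V,V)=C^2_{\rm YB}(V,V)$), and that the only $2$-coboundary is zero (so $B^2_{\rm YB}(V,V)=0$). Combined, these give $H^2_{\rm YB}(V,V)=Z^2/B^2=C^2_{\rm YB}(V,V)={\rm Hom}(V^{\otimes 2},V^{\otimes 2})$. The underlying reason both statements work is the naturality of the transposition: for any linear $g\from V\rar V$ one has $\tau(g\otimes\mathbb 1)=(\mathbb 1\otimes g)\tau$ and $\tau(\mathbb 1\otimes g)=(g\otimes\mathbb 1)\tau$. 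Diagrammatically, $\tau$ is a crossing whose effect is merely to permute tensor positions, so conjugation by $\tau_{12}:=\tau\otimes\mathbb 1$ or $\tau_{23}:=\mathbb 1\otimes\tau$ carries an operator on $V^{\otimes 2}$ to a copy of itself acting on a different pair of tensorands in $V^{\otimes 3}$.

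First I would handle $B^2_{\rm YB}=0$. Unpacking the definition of $\delta^1_{\rm YB}$ with $R=\tau$ and applying the two naturality identities above gives $\tau(f\otimes\mathbb 1)=(\mathbb 1\otimes f)\tau$ and $\tau(\mathbb 1\otimes f)=(f\otimes\mathbb 1)\tau$, so the four terms cancel in pairs and $\delta^1_{\rm YB}(f)=0$ for every $f\in C^1_{\rm YB}(V,V)$.

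Next I would show that $\delta^2_\tau(\phi)=0$ for every $\phi\in C^2_{\rm YB}(V,V)$. Writing $\phi_{12}:=\phi\otimes\mathbb 1$ and $\phi_{23}:=\mathbb 1\otimes\phi$, the six terms of $\delta^2_{\rm YB}(\phi)$ will be organized into three canceling positive-negative pairs by chasing elementary tensors through $V^{\otimes 3}$:
\begin{eqnarray*}
\tau_{12}\tau_{23}\phi_{12} &=& \phi_{23}\tau_{12}\tau_{23} \qquad (\text{both send } a\otimes b\otimes c \text{ to } c\otimes \phi(a\otimes b)),\\
\tau_{12}\phi_{23}\tau_{12} &=& \tau_{23}\phi_{12}\tau_{23} \qquad (\text{both apply } \phi \text{ to positions } 1 \text{ and } 3),\\
\phi_{12}\tau_{23}\tau_{12} &=& \tau_{23}\tau_{12}\phi_{23} \qquad (\text{both send } a\otimes b\otimes c \text{ to } \phi(b\otimes c)\otimes a).
\end{eqnarray*}
Each of these identities is an immediate consequence of the naturality of $\tau$: conjugating $\phi_{ij}$ by a transposition simply relocates the pair of positions on which $\phi$ acts. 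Since the positive and negative terms in $\delta^2_{\rm YB}(\phi)$ are exactly the left- and right-hand sides of these three equations, we conclude $\delta^2_\tau(\phi)=0$, hence $Z^2_{\rm YB}(V,V)=C^2_{\rm YB}(V,V)$. No step is genuinely hard; the only real obstacle is the bookkeeping of tensor positions, which is completely mechanical once the naturality of $\tau$ is in hand.
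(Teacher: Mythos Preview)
Your proof is correct and follows essentially the same approach as the paper: both arguments show that the six terms of $\delta^2_{\rm YB}(\phi)$ cancel in the same three positive--negative pairs, and that $\delta^1_{\rm YB}(f)=0$ identically. The only cosmetic difference is that the paper works with structure constants $\phi_{ij}^{k\ell}$ on basis vectors while you phrase the same cancellations coordinate-free via the naturality of $\tau$.
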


\begin{proof}			For a $2$-cochain 
			$\phi$, 
			 we use the structure constants $\phi(e_i\otimes e_j) = \phi_{ij}^{k\ell}e_k\otimes e_\ell$, where we employ the usual Einstein notation for the sums of repeated indices, and we have denoted by $e_1, \ldots, e_n$ a basis for $V$. 
			The first term in the 2-cocycle condition is 
			\begin{eqnarray*}
				(R \otimes \mathbb 1) (\mathbb 1 \otimes R)(\phi \otimes \mathbb 1)(e_i \otimes e_j \otimes e_k)
				= (\tau  \otimes \mathbb 1) (\mathbb 1 \otimes \tau) ( \phi_{ij}^{tr} e_t \otimes e_r \otimes e_k) ) = 
				\phi_{ij}^{tr}  e_k \otimes e_t \otimes e_r.
			\end{eqnarray*}
					 Similarly the 2-cocycle condition is computed as 
			\begin{eqnarray*}
		\lefteqn{	\delta^2_{\rm YB}(\phi)(e_i \otimes e_j \otimes e_k ) }\\
		&=& 			\phi_{ij}^{tr} e_k\otimes e_t\otimes e_r + \phi_{ik}^{tr} e_t\otimes e_j\otimes e_r 
					+ \phi_{jk}^{tr} e_t\otimes e_r\otimes e_i \\ 
					& & - \phi_{jk}^{tr} e_t\otimes e_r\otimes e_i -  \phi_{ik}^{tr} e_t\otimes e_j\otimes e_r  - \phi_{ij}^{tr} e_k\otimes e_t\otimes e_r =0 , 
			\end{eqnarray*} 
			which is 
			satisfied. It follows that any $2$-cochain is a $2$-cocycle. Similarly, using the notation $f(e_i) = f_i^je_j$ for a $1$-cochain $f:V\rightarrow V$, we find directly that $\delta^1_{\rm YB}(f) = 0$. 
			Hence we have $H^2_{\rm YB}(V, V) = C^2_{\rm YB}(V, V)$. 
\end{proof}

For the transposition $\tau$, Equations~\eqref{eqn:mu_comm} and~\eqref{eqn:partial_tr}  can be checked 
			to see that $S=(R,\alpha, \beta, \mu_0)= (\tau, 1, 1, \mathbb 1)$ is an EYBO.
		We consider the problem of finding enhanced $2$-cocycles of $\tau$.

\begin{lemma} \label{lem:cocy_tau}
Let $V = \mathbb R\langle e_0,e_1\rangle$ be a 2-dimensional real
vector space on basis $e_0, e_1$. 
Let $S=(\tau, 1, 1, \mathbb 1)$ be an EYBO as above. 
Let $\tilde S=(\tilde R, 1, 1, \tilde \mu)$ where $\tilde R = \tau + 	\hbar \phi$ and $\tilde \mu= \mathbb 1 + \hbar \mu_1$.
Consider the following choices for $\phi$ and $\mu_1$:
\begin{eqnarray} \quad
\phi_{i0}^{k0} = - \phi_{0i}^{0k}, \ 
\phi_{i1}^{k1} = - \phi_{1i}^{1k}, \
\phi_{01}^{10} \ {\rm and} \  \phi_{10}^{01} \ {\rm arbitrary,\  with} \ 
 \mu_1 \ {\rm defined \ by}\  \mu_i^k = -\phi_{i0}^{k0} - \phi_{i1}^{k1}.
\label{eq:phi}
\end{eqnarray}
Then 
we have that 
$\tilde S =(\tau + \hbar \phi, 1, 1, \mathbb 1 + \hbar \mu_1)$ is an EYBO. 

\end{lemma}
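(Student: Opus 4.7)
The plan is to apply Theorem~\ref{thm:inf_YB_inv} with $R = \tau$, $\alpha = \beta = 1$, and $\mu_0 = \mathbb{1}$. By the preceding lemma, every $2$-cochain is automatically a YB $2$-cocycle for $\tau$, and $\tilde R = \tau + \hbar\phi$ is invertible modulo $\hbar^2$ since $\tau$ is. So it remains to verify the three equations \eqref{eqn:inf_mu_comm}, \eqref{eqn:inf_partial_tr+}, \eqref{eqn:inf_partial_tr-}.

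For \eqref{eqn:inf_mu_comm}, after cancelling $\phi$ from both sides, what needs to be shown is
$$(\mu_1 \otimes \mathbb{1})\tau + (\mathbb{1} \otimes \mu_1)\tau = \tau(\mu_1 \otimes \mathbb{1}) + \tau(\mathbb{1} \otimes \mu_1).$$
This is automatic from the general identity $\tau(A \otimes B) = (B \otimes A)\tau$, a direct check on basis vectors; each summand on one side matches a summand on the other with no hypothesis on $\phi$ or $\mu_1$.

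For \eqref{eqn:inf_partial_tr+} and \eqref{eqn:inf_partial_tr-}, the key coordinate computation is
$$\tr_2(\tau(A \otimes B)) = BA,$$
obtained by writing $(A \otimes B)(e_i \otimes e_j) = A^p_i B^q_j e_p \otimes e_q$, applying $\tau$, and contracting the second tensor factor. In particular $\tr_2(\tau(\mu_1 \otimes \mathbb{1})) = \tr_2(\tau(\mathbb{1} \otimes \mu_1)) = \mu_1$, so the two middle terms of \eqref{eqn:inf_partial_tr+} contribute $2\mu_1$ and the equation reduces to $\tr_2(\phi) = -\mu_1$. Writing this in components gives $\sum_j \phi^{kj}_{ij} = -\mu^k_i$, i.e.\ $\mu^k_i = -\phi^{k0}_{i0} - \phi^{k1}_{i1}$, which is exactly the defining formula for $\mu_1$ in~\eqref{eq:phi}. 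For \eqref{eqn:inf_partial_tr-}, we use $\hat\phi_0 = \tau^{-1} = \tau$ and $\hat\phi_1 = -\tau\phi\tau$, so the condition reduces to $\tr_2(\tau\phi\tau) = \mu_1$. Computing components, $(\tau\phi\tau)^{k\ell}_{ij} = \phi^{\ell k}_{ji}$, so $\tr_2(\tau\phi\tau)(e_i) = \sum_{j,k} \phi^{jk}_{ji} e_k$, and the required equality $\phi^{0k}_{0i} + \phi^{1k}_{1i} = -\phi^{k0}_{i0} - \phi^{k1}_{i1}$ is exactly the pair of antisymmetry relations $\phi^{k0}_{i0} = -\phi^{0k}_{0i}$ and $\phi^{k1}_{i1} = -\phi^{1k}_{1i}$ imposed in~\eqref{eq:phi}.

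The main obstacle is purely bookkeeping: tracking precisely which indices are contracted in each partial trace and confirming that the constraints of~\eqref{eq:phi} land exactly on the components appearing in the diagonal sums. Once the identities $\tr_2(\tau(A \otimes B)) = BA$ and $(\tau\phi\tau)^{k\ell}_{ij} = \phi^{\ell k}_{ji}$ are in hand, the off-diagonal structure constants $\phi^{01}_{10}$ and $\phi^{10}_{01}$ never enter the trace conditions, which explains why they may be chosen arbitrarily.
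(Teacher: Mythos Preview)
Your proof is correct and follows essentially the same route as the paper's: verify \eqref{eqn:inf_mu_comm} holds identically for $R=\tau$, then reduce \eqref{eqn:inf_partial_tr+} to $\tr_2(\phi)=-\mu_1$ (giving $\mu_i^k=-\phi_{i0}^{k0}-\phi_{i1}^{k1}$) and \eqref{eqn:inf_partial_tr-} to $\tr_2(\tau\phi\tau)=\mu_1$ (giving $\phi_{ji}^{jk}=\mu_i^k$), the two conditions being compatible precisely under the antisymmetry in \eqref{eq:phi}. Your packaging via $\tr_2(\tau(A\otimes B))=BA$ and $(\tau\phi\tau)^{k\ell}_{ij}=\phi^{\ell k}_{ji}$ is a slightly cleaner bookkeeping device than the paper's direct basis computations, but the argument is the same.
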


\begin{proof}
	 Let $\tilde \mu = \mu_0 + \hbar \mu_1 $ be a deformation of $\mu_0$,
			 where $\mu_0=\mathbb 1$, and write $\mu_1(e_i) = \mu_i^je_j$. 
			We need  to obtain the constraints on $\phi$ and $\mu_1$ in order to have Equation~\eqref{eqn:inf_mu_comm} to hold. 
			The first term in the left-hand side of Equation~\eqref{eqn:inf_mu_comm}   
			for $(0,0,1) \in \Gamma_2$ is 
		$\phi_0( \mu_0 \otimes \mu_1)(e_i \otimes e_j )$, and substituting $\phi_0=\tau$, $\mu_0=\mathbb 1$
		and $\mu_1 (e_j ) = \mu_j^k e_k $, we obtain $\mu_j^k e_k\otimes e_i$ as the first term. 
					Similar computations for  Equation~\eqref{eqn:inf_mu_comm}  give  
			\begin{eqnarray*}
					\mu_j^k e_k\otimes e_i + \mu_i^k e_j\otimes e_k + \phi_{ij}^{k\ell}e_k\otimes e_{\ell} = \phi_{ij}^{k\ell}e_k\otimes e_{\ell} + \mu_i^ke_j\otimes e_k + \mu_j^ke_k\otimes e_i,
			\end{eqnarray*}
			which once again holds identically. 
			
			For the deformation
			$\tilde S=(\tilde R, \alpha, \beta, \phi)=(\tau+\hbar \phi, \mathbb 1, \mathbb 1, \mu_1)$ to give 
			an EYBE, it is sufficient to verify Equations~\eqref{eqn:inf_partial_tr+} and \eqref{eqn:inf_partial_tr-}.
			To evaluate $\tr_2$, we calculate as follows.
			With basis $\{ e_i \}$,  let an operator $A$ defined via the tensor notation   
			$A(e_i\otimes e_j) = \sum_{kl} a_{ij}^{kl}e_k\otimes e_l$,  
			and let $e_j^*$ be  dual basis vectors. 
			Then the  computation for $\tr_2(A)$ is performed as  
			$$ e_i \mapsto \sum_j e_i\otimes e_j\otimes e_j^* 
			\mapsto \sum_j A(e_i\otimes e_j)\otimes e_j^* 
			\mapsto \sum_j a_{ij}^{kl} e_k\otimes e_l \otimes e^*_j 
			\mapsto \sum_j a_{ij}^{kl}\delta_{jl} e_k 
			=  \sum_j a_{ij}^{kj} e_k.  $$
			Hence the entry $ik$ of the matrix for $\tr_2(A)$ is $ \sum_j a_{ij}^{kj} $, 
		    or simply $a_{ij}^{kj}$ in Einstein convention.

			We compute 
			$\tr_2 \tau(\mu_0\otimes \mu_1)(e_i) =  \mu_i^ke_k = \mu_1(e_i)$, 
			$\tr_2 \tau(\mu_1\otimes \mu_0)(e_i) =   \mu_i^je_j = \mu_1(e_i)$, 
and 
			 $\hat \phi_1 = -\phi_0^{-1}\phi_1\phi_0^{-1}$  as in Theorem~\ref{thm:inf_YB_inv}. 
			For Equations~\eqref{eqn:inf_partial_tr+} we compute
			$$\tr_2 \tau(\mu_0\otimes \mu_1)(e_i) + \tr_2 \tau(\mu_1\otimes \mu_0)(e_i) + \tr_2 \phi(\mu_0\otimes \mu_0)(e_i) =\mu_1 (e_i), $$
			 hence we obtain $ \phi_{ij}^{kj} = - \mu_i^k$.
			Similarly for  Equations~\eqref{eqn:inf_partial_tr-} we obtain $ \phi_{ji}^{jk} = \mu_i^k$.
			Then it is checked that the 
			 stated Solution~\eqref{eq:phi} 
			indeed satisfies these equations, due to the symmetry $\phi_{ik}^{i\ell} = - \phi_{ki}^{\ell i}$, which uniquely fixes $\mu_1$, and the fact that $\phi_{01}^{10}$ and $\phi_{10}^{01}$ do not appear in the equations for the partial trace.
\end{proof}

		Let $\sigma_1$ denote the standard generator of the 2-strand braid group ${\mathbb B}_2$,
and let $T_n$ be the closure of $\sigma_1^n$ for s positive integer $n$.
For $n=2,3$, $T_n$ is a Hopf link and trefoil knot, respectively.

\begin{proposition}\label{prop:Tn}
Consider the special case of  Solution~\eqref{eq:phi} in Lemma~\ref{lem:cocy_tau}:
 $$\phi_{01}^{01} = -\phi_{10}^{10} = \phi_{01}^{11} = - \phi_{10}^{11} = 1, \
 \phi_{01}^{10} = \phi_{10}^{01} = q, $$
  and $\phi_{ij}^{k\ell} = 0$ otherwise, where $q$ is an arbitrary real number, along with $\mu_0^0 = \mu_0^1 = -\mu_1^1 = 1$, $\mu_1^0 = 0$.

For the knots and links $T_n$ and the YB 2-cocycle $\phi$ in 
this solution, 
we have the following values. 
\begin{eqnarray*}
				\Phi_{\phi} (T_n) =\begin{cases}
					4+2nq\hbar \ \ n = 2k  \\
					2  \ \ n = 2k+1
				\end{cases}
\end{eqnarray*}
\end{proposition}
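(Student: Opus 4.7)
The plan is to apply Theorem~\ref{thm:inf_YB_inv} directly to the 2-braid $b_2 = \sigma_1^n$ (so $m=2$, $w(b_2) = n$, and $\alpha = \beta = 1$) and reduce everything to a handful of small traces on $V^{\otimes 2}$. Expanding $(\tilde R)^n = (\tau + \hbar\phi)^n$ modulo $\hbar^2$ gives $\Psi^0_\phi(b_2) = \tau^n$ and $\Psi^1_\phi(b_2) = \sum_{i=0}^{n-1} \tau^i \phi \tau^{n-1-i}$. Multiplying by $\tilde \mu \otimes \tilde\mu = \mathbb 1^{\otimes 2} + \hbar(\mu_1 \otimes \mathbb 1 + \mathbb 1 \otimes \mu_1) + O(\hbar^2)$, taking the full trace, and using cyclicity of $\tr$ to collapse each summand $\tr(\tau^i \phi \tau^{n-1-i})$ to $\tr(\phi\tau^{n-1})$, I obtain
\begin{eqnarray*}
\Phi_\phi^0(T_n) &=& \tr(\tau^n), \\
\Phi_\phi^1(T_n) &=& n\,\tr(\phi\tau^{n-1}) + \tr(\tau^n(\mu_1 \otimes \mathbb 1)) + \tr(\tau^n(\mathbb 1 \otimes \mu_1)).
\end{eqnarray*}

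The next step is to record a small identity used throughout: for operators $A, B$ on $V$, the component formula $\tau(A\otimes B)(e_i \otimes e_j) = Be_j \otimes Ae_i$ yields $\tr(\tau(A\otimes B)) = \tr(AB)$. Together with $\tr(A\otimes B) = \tr(A)\tr(B)$ in the untwisted case, this reduces both $\mu_1$-contributions to scalar multiples of $\tr(\mu_1) = \mu_0^0 + \mu_1^1 = 1 - 1 = 0$, regardless of the parity of $n$. Hence the $\mu_1$-terms vanish and $\Phi_\phi^1(T_n) = n\,\tr(\phi\tau^{n-1})$.

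I then split by parity using $\tau^2 = \mathbb 1$. For $n = 2k$, $\tau^n = \mathbb 1$ gives $\Phi_\phi^0(T_n) = \tr(\mathbb 1_{V^{\otimes 2}}) = 4$, while $\tau^{n-1} = \tau$ reduces the remaining trace to $\tr(\phi\tau) = \sum_{i,j}\phi_{ji}^{ij}$, which picks up only the off-diagonal contributions $\phi_{01}^{10} + \phi_{10}^{01} = 2q$. Thus the invariant is $4 + 2nq\hbar$. For $n = 2k+1$, $\tau^n = \tau$ gives $\Phi_\phi^0(T_n) = \tr(\tau) = \dim V = 2$, and $\tau^{n-1} = \mathbb 1$ gives $\tr(\phi) = \sum_{i,j}\phi_{ij}^{ij} = \phi_{01}^{01} + \phi_{10}^{10} = 1 + (-1) = 0$, so the $\hbar$-part vanishes and the invariant equals $2$.

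The main obstacle is essentially bookkeeping: one must track the sign conventions of the structure constants $\phi_{ij}^{k\ell}$ (with their antisymmetry $\phi_{ik}^{i\ell} = -\phi_{ki}^{\ell i}$) carefully through the cyclic rearrangements. The identity $\tr(\tau(A\otimes B)) = \tr(AB)$ is the one nonobvious compactification; without it, each $\mu_1$-term would expand into four separate component sums that would have to be cancelled by hand.
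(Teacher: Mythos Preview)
Your proof is correct. The main difference from the paper's argument is that you exploit cyclicity of the trace at the outset to collapse $\sum_{i=0}^{n-1}\tr(\tau^i\phi\tau^{n-1-i})$ immediately to $n\,\tr(\phi\tau^{n-1})$, after which a single parity split using $\tau^2=\mathbb 1$ finishes the computation. The paper instead keeps the sum $\sum_{i=0}^{n-1}\tr(\tau^{n-1-i}\phi\tau^i)$ intact and proceeds by induction on $k$ (separately for $n=2k$ and $n=2k+1$), using $\tau^2=\mathbb 1$ to peel off two terms at each step and accumulate the contributions $\tr(\phi\tau)+\tr(\tau\phi)=4q$ (even case) or $\tr(\phi)+\tr(\tau\phi\tau)=0$ (odd case). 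Your route is shorter and avoids the induction entirely; the paper's approach computes four elementary traces rather than two but never invokes full cyclicity. Both arguments dispose of the $\mu_1$-contributions in the same way, via $\tr(\mu_1)=0$; your identity $\tr(\tau(A\otimes B))=\tr(AB)$ is a clean way to package the odd-$n$ case of that reduction, which the paper handles by direct inspection.
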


\begin{proof}
		We consider $\Phi_\phi^i(T_n)$ for $i=0,1$ separately. For $i=0$, $\Phi_\phi^i(T_n)$ is the trace of $\tau^n$, which takes values $4$  or  
		$2$ depending on the parity of $n$. For the degree $i=1$ we first compute the traces 
		$$\tr (\phi) = \tr (\tau\phi\tau) =0, \quad {\rm and} \quad \tr (\phi\tau) = \tr (\tau\phi) = 2q, $$
		which are obtained by direct inspection of the action of the operators on the four basis vectors $e_i\otimes e_j$. 
	Moreover, a direct computation shows that 
	$$\tr (\tau[\mu_1\otimes \mathbb 1 + \mathbb 1\otimes \mu_1]) = \tr (\mu_1\otimes \mathbb 1 + \mathbb 1\otimes \mu_1) = 0, $$
	 which in turn implies that $\tr (\tau^n[\mu_1\otimes \mathbb 1 + \mathbb 1\otimes \mu_1])=0$ for all $n$. 
     Therefore we do not include $\mu_1$ terms in the computations below. 
			
		We now proceed by induction. 
		We need to compute 
		\begin{eqnarray*}
				\Phi^1_\phi(T_n) &=& \sum_{i=0}^{n-1} \tr (\tau^{n-i-1}\phi\tau^i), 
		\end{eqnarray*} 
			and proceed by induction on $k$, where we distinguish the two cases $n=2k$ and $n=2k+1$, with $k\geq 0$. The base case $k=0$ of $n=2k+1$ is clearly satisfied, since $\Phi^1_\phi(T_n) = \tr (\phi) = 0$. Suppose that $\Phi^1_\phi(T_n) = 0$ for $n = 2k-1$. We compute
		\begin{eqnarray*}
					\Phi^1_\phi(T_{2k+1}) 
					&=&  \sum_{i=0}^{2k} \tr (\tau^{2k-i}\phi\tau^i)\\
					&=& \sum_{i=0}^{2k-2} \tr (\tau^2\tau^{2k-i}\phi\tau^i) + \tr (\tau \phi \tau^{2k-1}) + \tr (\tau^2 \phi \tau^{2k})\\
					&=& \sum_{i=0}^{2k-2} \tr (\tau^{2k-i}\phi\tau^i) + \tr (\tau \phi \tau) + \tr (\phi)\\
					&=& 0. 
		\end{eqnarray*}
		For the even case, $n = 2k$, we have the base case $\Phi^1_\phi(T_2) =   \tr (\phi\tau) +  \tr (\tau\phi) = 4q$. Suppose now the result holds for $n=2k$. We 
		show it for $n=2k+2$. As in the previous case we have 
		\begin{eqnarray*}
			\Phi^1_\phi(T_{2k+2}) 
			&=&  \sum_{i=0}^{2k+1} \tr (\tau^{2k-i+1}\phi\tau^i)\\
			&=& \sum_{i=0}^{2k} \tr (\tau^2\tau^{2k-i+1}\phi\tau^i) + \tr (\tau \phi \tau^{2k}) + \tr (\tau^2 \phi \tau^{2k+1})\\
			&=& \sum_{i=0}^{2k} \tr (\tau^{2k-i+1}\phi\tau^i) + \tr (\tau \phi) + \tr (\phi\tau)\\
			&=& 4kq+4q.  
		\end{eqnarray*}
		This completes the proof. 
\end{proof}

Since the invariant of the unknot and unlink are $d$ and $d^2$ where $d$ is  the dimension of $V$, we find that the quantum cocycle invariant is nontrivial for this choice of enhanced $2$-cocycle. 

\subsection{Invariants from the tensor version of quandle 2-cocycle invariants} \label{subsec:Q}

We now discuss a class of example of importance, which motivates the name of quantum cocycle invariants.
	
	A {\it quandle} $(Q, *)$ is a set $Q$ with an idempotent operation  $*$ such that 
	the right translation $R_y: x \mapsto x*y$, $x\in Q$,  is a $*$-automorphism for all $y$. 
	A group with the conjugation operation is a typical example of a quandle.
	A quandle homology theory was developed in \cite{CJKLS} and quandle cocycle invariant was defined with 
	applications to knots and knotted surfaces. 
	
	The quandle 2-cocycle invariant for a knot diagram is defined as follows. Let  $Q$ be a finite quandle and $A$ be a finite abelian group.
			A quandle 2-cocycle with the coefficient group $A$ is a function $\psi: Q \times Q \rightarrow  A$ with an
			equation called quandle 2-cocycle condition
			$$ \psi (x,y) + \psi (x*y) - \psi(x,z) - \psi (x * z, y * z ) =0 $$
			for all $x,y,z \in Q$. 
						
Let $D$ be an oriented  diagram of an oriented  knot $K$.
A {\it coloring} of $D$ by $Q$ is a map ${\cal C} : {\cal A} \rightarrow Q$
from the set ${\cal A}$ of arcs of $D$ (broken at under crossings) to $Q$ such that at each crossing 
the coloring rule is satisfied as depicted in  Figure~\ref{coloring}, where the left is a positive crossing and the right is negative.
Let ${\rm Col}_Q(D)$ be the set of colorings of $D$ by $Q$. 
Let $\psi: Q \times Q \rightarrow A$ be a quandle 2-cocycle with a coefficient abelian group $A$ with multiplicative notation. 
For a given coloring ${\cal C}$,  the pair of colors $(x,y)$ 
as indicated in Figure~\ref{coloring} at a crossing, is denoted for each crossing $u$ by $(x_u, y_u)$.
A weight $\psi (x_u, y_u)^{\epsilon (u) }$ is assigned at each crossing $u$, where $\epsilon(u)$ is a sign $\pm 1$ of a crossing, for a  positive or negative crossing $u$, respectively.
Then the {\it quandle 2-cocycle invariant} is defined by 
$$\Phi_\psi^{\rm Q} (D)=  \sum_{{\cal C} \in {\rm Col}_Q(D)} \prod_{u} \psi (x_u, y_u)^{\epsilon (u) } , $$
where the colors $(x_u, y_u)$ at each crossing depends on a given coloring ${\mathcal C}$, and 
the summation is taken over all colorings.
This is known to be independent of choice of a diagram $D$ of a knot $K$,
where the quandle 2-cocycle condition is used to show invariance under Reidemeister move III,  and is denoted by $\Phi_\psi^{\rm Q} (K)$.
We refer to \cite{CJKLS} for details.

\begin{figure}[htb]
\begin{center}
\includegraphics[width=2.5in]{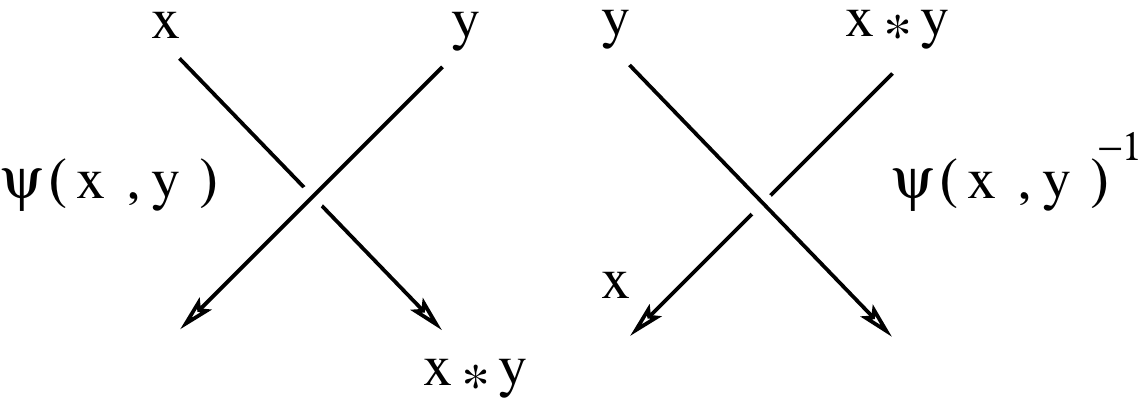}
\end{center}
\caption{}
\label{coloring}
\end{figure}

			Let $(Q,*)$ be a quandle, and let $\mathbb k$ denote a ring. Let $V$ be the free module generated by $Q$ over $\mathbb k$. In this example, the tensor product refers to $\otimes_{\mathbb k}$.
			 We define the YBO on $V$ as $R(x\otimes y) = y\otimes  (x*y)$, and extending it by linearity. 
						 We now show that $R$ along with the choice of $\alpha = \beta =1$ and $\mu_0 = \mathbb 1$ gives an EYBO
			 $S=(R, \alpha, \beta, \mu_0)=(R, 1, 1, \mathbb 1 )$.

\begin{lemma}		\label{lem:Q2toYB2}
	Set $\phi(x\otimes y) := \psi (x\otimes y)\cdot R(x\otimes y)$.
Then  $\phi$ is a YB 2-cocycle.
	\end{lemma}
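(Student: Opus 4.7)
The plan is to verify the cocycle condition $\delta^2_{\rm YB}(\phi)=0$ by applying both sides of the defining formula to a basis element $x\otimes y\otimes z\in V^{\otimes 3}$ and reducing the resulting identity to the quandle $2$-cocycle condition displayed earlier in this subsection.

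First I would record the underlying fact that $R(x\otimes y)=y\otimes(x*y)$ satisfies the Yang-Baxter equation precisely because the self-distributive axiom $(x*y)*z=(x*z)*(y*z)$ holds in $Q$: direct computation shows that both $(R\otimes\mathbb 1)(\mathbb 1\otimes R)(R\otimes\mathbb 1)$ and $(\mathbb 1\otimes R)(R\otimes\mathbb 1)(\mathbb 1\otimes R)$ send $x\otimes y\otimes z$ to $z\otimes (y*z)\otimes((x*y)*z)$. Then the key observation is that because $\phi$ is defined by rescaling $R$ by a coloring-dependent scalar, each of the six terms in $\delta^2_{\rm YB}(\phi)$ equals the corresponding term in the YBE for $R$ times a single scalar $\psi(-,-)$, evaluated on the pair of inputs entering the crossing where $R$ has been replaced by $\phi$. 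In particular all six terms produce the same basis vector $z\otimes(y*z)\otimes((x*y)*z)$, so the cocycle identity reduces to a scalar equation.

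Next I would track the scalars crossing by crossing. Reading the diagram top to bottom, on the left-hand side of the YBE the three crossings are entered with the pairs $(x,y)$, $(x*y,z)$, $(y,z)$; replacing the corresponding $R$ by $\phi$ in the three positive terms of $\delta^2_{\rm YB}(\phi)$ yields weights $\psi(x,y)$, $\psi(x*y,z)$, $\psi(y,z)$ respectively. On the right-hand side of the YBE the three crossings are entered with $(y,z)$, $(x,z)$, $(x*z,y*z)$, contributing weights $\psi(y,z)$, $\psi(x,z)$, $\psi(x*z,y*z)$ for the three negative terms. Setting $\delta^2_{\rm YB}(\phi)(x\otimes y\otimes z)=0$ therefore amounts to
\begin{eqnarray*}
\psi(x,y)+\psi(x*y,z)+\psi(y,z)=\psi(y,z)+\psi(x,z)+\psi(x*z,y*z),
\end{eqnarray*}
and cancelling the common term $\psi(y,z)$ reproduces exactly the quandle $2$-cocycle condition $\psi(x,y)+\psi(x*y,z)-\psi(x,z)-\psi(x*z,y*z)=0$ assumed on $\psi$.

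The proof is then complete: since the quandle cocycle identity holds by hypothesis, the scalar equation on both sides matches, and $\delta^2_{\rm YB}(\phi)(x\otimes y\otimes z)=0$ on every basis element, so $\phi\in Z^2_{\rm YB}(V,V)$. No step presents a real obstacle; the only point requiring care is the bookkeeping of which pair $(u,v)$ of labels enters each of the three crossings in each of the two YBE diagrams, which is straightforward by inspection of the diagrammatic description of $\delta^2_{\rm YB}$ given in Figure~\ref{YBd2}.
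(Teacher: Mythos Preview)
Your proof is correct and follows essentially the same approach as the paper: the paper computes the first term $(R\otimes\mathbb 1)(\mathbb 1\otimes R)(\phi\otimes\mathbb 1)(x\otimes y\otimes z)=\psi(x,y)\,z\otimes(y*z)\otimes((x*y)*z)$ and then simply remarks that the remaining computation is similar to that for quandle $2$-cocycles, whereas you carry out the full bookkeeping explicitly. Your more detailed tracking of all six scalar weights and the explicit cancellation of $\psi(y,z)$ is exactly what the paper's phrase ``it is seen that $\phi$ satisfies the YB $2$-cocycle condition, from the quandle $2$-cocycle condition of $\psi$'' is gesturing at.
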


\begin{proof}		The first term of the YB 2-differential $ \delta^2_{\rm YB} (\phi) (x \otimes y \otimes z) $
			for basis elements is 
\begin{eqnarray*}
\lefteqn{  (R \otimes {\mathbb 1} ) ( {\mathbb 1}  \otimes R ) ( \phi \otimes  {\mathbb 1} ) (x \otimes y \otimes z) }\\
&=&  (R \otimes {\mathbb 1} ) ( {\mathbb 1}  \otimes R ) (\psi (x \otimes y) [  y \otimes (x*y) ] \otimes z)  \\
&=&   (R \otimes {\mathbb 1} ) (  \psi (x \otimes y) ( y \otimes z)  \otimes [ ( x*y)*z ] ) \\
&=&  \psi (x \otimes y ) (z \otimes ( y * z ) \otimes [ ( x*y)*z ] ) . 
\end{eqnarray*} 
This computation is similar to the one for quandle 2-cocycles, and it is seen that 
$\phi$ satisfies the YB 2-cocycle condition, from the quandle 2-cocycle condition of $\psi$. 
\end{proof}
	
Lemmas~\ref{lem:YB2toR} and \ref{lem:Q2toYB2} imply the following.

	\begin{lemma}\label{lem:YBcocyR}
	Set $\mathbb k = \Z [A]$, where $A$ is a coefficient abelian group of a quandle 2-cocycle $\psi$
	of a quandle $Q$, 
	and $V$ be a free $\mathbb k$-module on which $R$ is defined.
	Define the operator 
	$$\tilde R(x\otimes y) = R(x\otimes y) + \hbar \phi(x \otimes y) \quad 
	{\rm on } \quad \tilde V = \mathbb k [[\hbar] / (\hbar^2) \otimes V . $$
	Then $\tilde R$ is a YBO on $\tilde V$.
\end{lemma}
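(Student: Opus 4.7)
The plan is to deduce this lemma as an immediate consequence of Lemmas~\ref{lem:YB2toR} and \ref{lem:Q2toYB2}, supplemented by the standard fact that a quandle operation induces a YBO. First, I would verify that $R(x \otimes y) = y \otimes (x*y)$, extended $\mathbb k$-linearly, is a YBO on $V$: the set-theoretic YBE reduces on basis triples to the right self-distributive identity $(x*y)*z = (x*z)*(y*z)$, and invertibility is guaranteed by the quandle axiom that each right translation $R_y : Q \to Q$ is a bijection.

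Next I would check that $\phi(x\otimes y) = \psi(x,y) \cdot R(x\otimes y)$, extended by linearity, is a well-defined $\mathbb k$-linear endomorphism of $V^{\otimes 2}$. The choice $\mathbb k = \Z[A]$ is precisely what makes this legitimate: the multiplicative weights $\psi(x,y) \in A$ become units in the group ring, hence act as scalars on $V$. With this in hand, Lemma~\ref{lem:Q2toYB2} shows that $\phi$ satisfies the YB 2-cocycle condition, i.e.\ $\phi \in Z^2_{\rm YB}(V,V)$; the quandle 2-cocycle identity for $\psi$ is exactly what makes the six terms of $\delta^2_{\rm YB}(\phi)$ cancel in pairs, in parallel with the classical verification for $\Phi_\psi^{\rm Q}$ under Reidemeister move III.

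Finally, applying Lemma~\ref{lem:YB2toR} to the YBO $R$ and the YB 2-cocycle $\phi$ immediately yields that $\tilde R = R + \hbar\phi$ is a YBO on $\tilde V = \mathbb k[[\hbar]]/(\hbar^2) \otimes V$, which is exactly the claim. The main, and essentially only, obstacle is the bookkeeping needed to ensure $\phi$ is genuinely a $\mathbb k$-linear map rather than a set-theoretic decoration of basis pairs; this is precisely the role of the choice $\mathbb k = \Z[A]$, after which the lemma becomes a two-step corollary of the preceding results. No direct manipulation of the YBE for $\tilde R$ is needed, since that work has already been performed in Lemma~\ref{lem:YB2toR}.
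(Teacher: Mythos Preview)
Your proposal is correct and matches the paper's approach exactly: the paper states this lemma as an immediate consequence of Lemmas~\ref{lem:YB2toR} and \ref{lem:Q2toYB2}, without a separate proof. Your additional remarks about $R$ being a YBO via self-distributivity and about $\mathbb k = \Z[A]$ making $\phi$ genuinely $\mathbb k$-linear are reasonable elaborations of what the paper leaves implicit.
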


\begin{lemma}\label{lem:Rinverse}
Let $Q$ be a quandle, and for  $x, y \in Q$, 
let $z \in Q$ be a unique element such that $z*x=y$. 
Set  $\hat \phi (x \otimes y) := - \psi (z\otimes x ) R^{-1} (x \otimes y) $.
Then $R^{-1} + \hbar \hat \phi$ is the inverse of $\tilde R$.  
\end{lemma}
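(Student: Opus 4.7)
The plan is to verify directly that $\tilde{R}(R^{-1} + \hbar \hat\phi) \equiv \mathbb{1} \pmod{\hbar^2}$, and symmetrically for the reverse composition. Expanding the product gives
$$
(R + \hbar\phi)(R^{-1} + \hbar\hat\phi) = \mathbb{1} + \hbar(R\hat\phi + \phi R^{-1}) + \hbar^2 \phi\hat\phi \equiv \mathbb{1} + \hbar(R\hat\phi + \phi R^{-1}) \pmod{\hbar^2},
$$
so it suffices to prove $R\hat\phi + \phi R^{-1} = 0$, equivalently $\hat\phi = -R^{-1}\phi R^{-1}$. This is exactly the general form of the first-order coefficient of the inverse of an infinitesimal deformation, as already recorded in Definition~\ref{def:deg1_operator}. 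Hence the entire content of the lemma is to check that the specific formula $\hat\phi(x\otimes y) = -\psi(z,x) R^{-1}(x\otimes y)$ coincides with $-R^{-1}\phi R^{-1}(x\otimes y)$ on basis elements.

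For this, I first unwind $R^{-1}$: since $R(a\otimes b) = b\otimes (a*b)$, inverting gives $R^{-1}(x\otimes y) = z\otimes x$, where $z$ is the unique element of $Q$ satisfying $z*x = y$. Uniqueness follows from the quandle axiom that right translation by $x$ is a bijection; this is precisely the $z$ appearing in the statement of the lemma. Now apply $\phi$ to $z\otimes x$ using $\phi(a\otimes b) = \psi(a,b)\cdot b\otimes (a*b)$ from Lemma~\ref{lem:Q2toYB2}:
$$
\phi(z\otimes x) = \psi(z,x)\cdot x\otimes (z*x) = \psi(z,x)\cdot x\otimes y.
$$
Finally, apply $R^{-1}$ once more to $x\otimes y$, which returns $z\otimes x$, yielding
$$
-R^{-1}\phi R^{-1}(x\otimes y) = -\psi(z,x)\, z\otimes x = -\psi(z,x)\, R^{-1}(x\otimes y) = \hat\phi(x\otimes y).
$$

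The reverse composition $(R^{-1} + \hbar\hat\phi)(R + \hbar\phi) \equiv \mathbb{1} \pmod{\hbar^2}$ is automatic from the same identity: $\hat\phi R = -R^{-1}\phi R^{-1} R = -R^{-1}\phi$, so $R^{-1}\phi + \hat\phi R = 0$. There is no genuine obstacle here; the only subtle point is keeping track of the implicitly defined element $z$ satisfying $z*x = y$, which controls simultaneously the action of $R^{-1}$ and the coefficient $\psi(z,x)$ appearing in $\hat\phi$. Once this bookkeeping is set up, the verification is a one-line computation.
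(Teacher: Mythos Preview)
Your proof is correct and follows essentially the same approach as the paper: both compute $(R+\hbar\phi)(R^{-1}+\hbar\hat\phi)$ on basis elements using $R^{-1}(x\otimes y)=z\otimes x$ and $\phi(z\otimes x)=\psi(z,x)\,(x\otimes y)$. Your version makes the reduction to the general identity $\hat\phi=-R^{-1}\phi R^{-1}$ from Definition~\ref{def:deg1_operator} explicit before verifying it, whereas the paper carries out the same cancellation directly, but the underlying computation is identical.
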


\begin{proof}
We note that $R (z \otimes x)=(x \otimes y)$ and $R^{-1} (x \otimes y)=(z \otimes x)$.
Then one computes 
$$ (R + \hbar \phi ) (R^{-1} + \hbar \hat \phi) ( x\otimes y) 
= (R + \hbar \phi )[  (z \otimes x) + \hbar (- \psi (z \otimes x) (z \otimes x) ]
= x \otimes y .
$$
The other case is similar.
\end{proof}

\begin{lemma}		\label{lem:QEYBO}
Let $\tilde R= R+ \hbar \phi $
as in Lemma~\ref{lem:YB2toR}.	Define  $\tilde S=( \tilde R, 1, 1, \tilde \mu)$,
			where $\tilde \mu = \mu_0 + \hbar \mu_1$ with $\mu_0=\mathbb 1$ and $\mu_1=0$.
			Then $\tilde S$ is an EYBO.
\end{lemma}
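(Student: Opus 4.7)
The plan is to verify the hypotheses of Theorem~\ref{thm:inf_YB_inv} directly for the pair $(\phi, \mu_1) = (\phi, 0)$. Since $\tilde R = R + \hbar \phi$ is already known to be a YBO by Lemmas~\ref{lem:Q2toYB2} and~\ref{lem:YBcocyR}, and its inverse has the form described in Lemma~\ref{lem:Rinverse}, the work reduces to checking Equations~\eqref{eqn:inf_mu_comm}, \eqref{eqn:inf_partial_tr+}, and \eqref{eqn:inf_partial_tr-} with $\alpha = \beta = 1$, $\mu_0 = \mathbb 1$, and $\mu_1 = 0$.

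For Equation~\eqref{eqn:inf_mu_comm}, the hypothesis $\mu_1 = 0$ kills every summand in $\Gamma_1^3$ except the two involving only $\mu_0 = \mathbb 1$ paired with $\phi$, which appear identically on both sides. Hence the equation reduces to $\phi = \phi$. This disposes of the commutativity condition with no real computation.

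The substantive step is to check Equations~\eqref{eqn:inf_partial_tr+} and~\eqref{eqn:inf_partial_tr-}. With $\mu_1 = 0$, the right-hand sides vanish and the left-hand sides collapse to $\tr_2(\phi) = 0$ and $\tr_2(\hat\phi) = 0$ respectively. I would compute these on basis elements: from $\phi(e_x \otimes e_y) = \psi(x,y)\, e_y \otimes e_{x*y}$, the entries of $\tr_2(\phi)$ read
\begin{eqnarray*}
\tr_2(\phi)(e_x) = \sum_{k\, :\, x*k = k} \psi(x,k)\, e_k,
\end{eqnarray*}
so the trace is supported on fixed points of the right translation $R_x$. Under the quandle hypotheses implicit in the assertion that $(R,1,1,\mathbb 1)$ is already an EYBO---namely that $R_x$ has a unique fixed point $k = x$ (so that $\tr_2(R) = \mathbb 1$ holds in degree zero)---this sum collapses to $\psi(x,x)\, e_x$, which vanishes by the standard normalization $\psi(x,x) = 0$ for a quandle 2-cocycle. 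An analogous computation using the explicit formula $\hat\phi(e_x \otimes e_y) = -\psi(z,x)\, e_z \otimes e_x$ with $z*x = y$ from Lemma~\ref{lem:Rinverse} forces $y = x$ and hence $z = x$ when taking the partial trace, giving $\tr_2(\hat\phi)(e_x) = -\psi(x,x)\, e_x = 0$.

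The main obstacle is the bookkeeping in the partial-trace calculation: one must track which basis indices survive and then invoke both the idempotent/fixed-point structure of the quandle action and the normalization of $\psi$. Once these two ingredients are in place, the two trace identities are one-line verifications, and the three conditions of Theorem~\ref{thm:inf_YB_inv} assemble into the EYBO property for $\tilde S$.
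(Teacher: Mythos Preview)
Your proof is correct and follows essentially the same approach as the paper: both reduce the partial-trace conditions to the quandle fact that $x*k=k$ forces $x=k$ (by injectivity of $R_k$, since $R_k(x)=k=R_k(k)$) together with the normalization $\psi(x,x)=0$. One small terminological slip: the condition $x*k=k$ is not literally that $k$ is a fixed point of $R_x$ (which would read $k*x=k$), but your actual computation uses the correct implication and is fine.
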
			

\begin{proof}			 			This follows from $\psi(x\otimes x) = 0$.
	Since Lemma~\ref{lem:YBcocyR} shows that $\tilde R$ 
	  is a YBO, we need only to show that the condition on partial traces \eqref{eqn:partial_tr} is satisfied. For notational clarity, 
	  denote by $e_x$ the elements of $V$ corresponding to $Q$, instead of using the letter $x$ directly. We denote by $e_x^*$ the dual element, defined by $\langle e_x| e_y^*\rangle := e_y^*(e_x) = \delta_{xy}$. Then, for each basis vector $e_x$, to compute the partial trace ${\rm Tr}_2(\tilde R\circ\tilde \mu\otimes \tilde \mu)$ we have
	 	\begin{eqnarray*}
	 			e_x
	 			&\mapsto& \sum_{y\in Q} e_x \otimes e_y \otimes e_y^*\\
	 			&\mapsto& \sum_{y\in Q} [e_y \otimes e_{x*y} \otimes e_y^*+\hbar \phi(e_x\otimes e_y) \otimes e_y^*]\\
	 			&=& \sum_{y\in Q} [e_y \otimes e_{x*y} \otimes e_y^*+\hbar \psi(x,y)\cdot  e_y \otimes e_{x*y} \otimes e_y^*]\\
	 			&\mapsto& \sum_{y\in Q}[ \langle e_{x*y}|e_y^*\rangle e_y + \hbar \psi(x,y) \langle e_{x*y}|e_y^*\rangle e_y]\\
	 			&=& e_x + \psi(x,x)\cdot e_x\\
	 			&=& e_x, 
	 	\end{eqnarray*}
 		where we have used the fact that $x*y=y$ in a quandle implies $x=y$. A similar computation holds for ${\rm Tr}_2(\tilde R^{-1}\circ\tilde \mu\otimes \tilde \mu)$. 
\end{proof}

Lemma~\ref{lem:QEYBO} and Theorem~\ref{thm:inf_YB_inv} imply the following.

\begin{proposition} \label{prop:QtoYB}
The EYBO $\tilde S$ defined
from $\tilde R=R + \hbar \phi$  in Lemma~\ref{lem:QEYBO}
			 defines  via trace a YB 2-cocycle invariant 
			$\Phi_\phi (K) = \Phi_\phi^0 (K) + \hbar \Phi_\phi^1 (K)$ for knots $K$. 
			\end{proposition}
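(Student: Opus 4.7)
The plan is to observe that Proposition~\ref{prop:QtoYB} is essentially a direct corollary of the preceding lemmas combined with Theorem~\ref{thm:inf_YB_inv}. All the technical content has already been unpacked, and what remains is to assemble these pieces in order to invoke the trace construction in the deformation setting.

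First, I would recall the setup: given the quandle $(Q,\ast)$ and a quandle 2-cocycle $\psi \colon Q\times Q \to A$, we form the free $\mathbb k$-module $V$ with $\mathbb k = \Z[A]$, the YBO $R(x\otimes y) = y\otimes (x\ast y)$, and the 2-cochain $\phi(x\otimes y)=\psi(x,y)\cdot R(x\otimes y)$. Then Lemma~\ref{lem:Q2toYB2} gives that $\phi$ is a YB 2-cocycle for $R$, while Lemma~\ref{lem:YBcocyR} (via Lemma~\ref{lem:YB2toR}) guarantees that $\tilde R = R + \hbar \phi$ is a genuine YBO on $\tilde V = \mathbb k[[\hbar]]/(\hbar^2)\otimes V$, with inverse explicitly described in Lemma~\ref{lem:Rinverse}. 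At this point, what remains to be checked for the trace construction is the enhancement data.

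Next, I would appeal to Lemma~\ref{lem:QEYBO}, which precisely asserts that $\tilde S = (\tilde R, 1, 1, \tilde\mu)$ with $\tilde \mu = \mathbb 1$ (so $\mu_1 = 0$) is an EYBO on $\tilde V$. With this in hand, the hypothesis of the second statement of Theorem~\ref{thm:inf_YB_inv} is satisfied: we have an EYBO $(R,1,1,\mathbb 1)$ over $V$, an infinitesimal deformation parameterized by the EYB 2-cocycle $(\phi, 0)$, and an associated EYBO $\tilde S$ over $\tilde V$. Therefore the theorem produces, from any braid $b_m$ whose closure is $K$, a well-defined quantity
\begin{equation*}
\Psi^1_{\tilde S}(b_m) \;=\; \sum_{(i,j_1,\ldots,j_m)\in \Gamma^{m+1}} \alpha^{-w(b_m)}\beta^{-m}\,\tr\bigl(\Psi^i_\phi(b_m)(\mu_{j_1}\otimes \cdots \otimes \mu_{j_m})\bigr),
\end{equation*}
which is independent of $b_m$ and invariant under Markov moves.

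Finally, I would note that decomposing the trace of $\alpha^{-w(b_m)}\beta^{-m}\Psi_\phi(b_m)\tilde\mu^{\otimes m}$ into its degree-$0$ and degree-$1$ parts in $\hbar$ yields exactly the pair $(\Phi^0_\phi(K),\Phi^1_\phi(K))$ of Definition~\ref{def:qcocy}, so that $\Phi_\phi(K) = \Phi^0_\phi(K) + \hbar\,\Phi^1_\phi(K)$ is a knot invariant. Since this is purely a matter of sorting terms by $\hbar$-degree in an expression already known to be well defined and Markov-invariant, there is no real obstacle beyond verifying the hypotheses of Theorem~\ref{thm:inf_YB_inv}, all of which are provided by the chain of lemmas. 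The only point requiring a word of care is the simplification afforded by $\mu_1 = 0$: many terms in the summation~\eqref{eqn:inv_deg1} that would otherwise appear drop out, and the remaining contributions are precisely those picking out a single insertion of $\phi$ in place of a crossing of $b_m$.
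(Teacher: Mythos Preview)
Your proposal is correct and follows exactly the paper's approach: the paper simply states that Lemma~\ref{lem:QEYBO} and Theorem~\ref{thm:inf_YB_inv} imply the proposition, and your argument is precisely an unpacking of that implication. Your additional remarks about the simplification from $\mu_1=0$ are consistent with the subsequent computation in Proposition~\ref{pro:quandle_quantum}.
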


We now show a relation between quandle 2-cocycle invariants and the YB 2-cocycle invariants of knots. Then we have the following result.

\begin{proposition}\label{pro:quandle_quantum}
We have 
$ \Phi^1_\phi (K) = \sum_{\mathcal C  \in {\rm Col}_Q(D)} \sum_u \epsilon(u) \psi(x_u, y_u) $,
where both summations are  taken in $\mathbb k=\Z [A]$. Thus we have
$$ \Phi_\phi (K) = \Phi_\phi^0 (K) + \hbar \Phi_\phi^1 (K)
= | {\rm Col}_Q(D) | + \hbar  \sum_{\mathcal C  \in {\rm Col}_Q(D)} \sum_u \epsilon(u) \psi(x_u, y_u) , $$
where $ | {\rm Col}_Q(D) | $ denotes the number of coloring of $D$ by $Q$.
\end{proposition}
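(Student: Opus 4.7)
The plan is to exploit the quandle basis $\{e_x : x \in Q\}$ of $V$, in which both $R$ and $\phi$ act as scalar multiples of basis-permuting operators, thereby reducing the trace computation to a state sum over quandle colorings of the closure of $b_m$. First I would simplify the invariant formula: since $\mu_0 = \mathbb{1}$ and $\mu_1 = 0$ by construction (Lemma~\ref{lem:QEYBO}), and $\alpha = \beta = 1$, the sum in Equation~\eqref{eqn:inv_deg1} collapses to a single term, giving $\Phi^1_\phi(K) = \tr(\Psi^1_\phi(b_m))$ and $\Phi^0_\phi(K) = \tr(\Psi^0_\phi(b_m))$ for any braid $b_m$ whose closure is $K$.

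Next, I would unpack $\Psi^1_\phi(b_m)$ using Definition~\ref{def:deg1_operator}: writing $b_m = \sigma_{i_1}^{\epsilon_1}\cdots \sigma_{i_n}^{\epsilon_n}$ and expanding each factor $\tilde R^{\epsilon_k}$ to linear order in $\hbar$ yields
$$
\Psi^1_\phi(b_m) \;=\; \sum_{u=1}^n \Bigl( \prod_{k<u} R_{i_k}^{\epsilon_k}\Bigr) \, \Phi_{i_u}^{\epsilon_u} \, \Bigl(\prod_{k>u} R_{i_k}^{\epsilon_k}\Bigr),
$$
where $R_i^\epsilon$ denotes $R^\epsilon$ acting between positions $i$ and $i+1$, $\Phi_{i}^{+} := \phi$, and $\Phi_{i}^{-} := \hat\phi_1 = -R^{-1}\phi R^{-1}$. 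By Lemma~\ref{lem:Rinverse}, these insertions differ from $R^\pm$ only by a scalar read off at each crossing: $\phi(x\otimes y) = \psi(x,y)\, R(x\otimes y)$ at a positive crossing, while $\hat\phi_1(x\otimes y) = -\psi(z,x)\, R^{-1}(x\otimes y)$ at a negative crossing, where $z$ is the unique quandle element with $z*x = y$.

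The core step is to identify the trace as a colored state sum. Since $R^{\pm}$ permutes basis tensors of $V^{\otimes m}$ according to the quandle coloring rules at positive/negative crossings, the trace $\tr(\Psi^0_\phi(b_m))$ is a sum over tuples $(x_1,\ldots,x_m)$ such that propagating colors through the braid returns the initial tuple at the closure; this set is in bijection with ${\rm Col}_Q(D)$, giving $\Phi^0_\phi(K) = |{\rm Col}_Q(D)|$. For the degree-one term, replacing a single $R_{i_u}^{\epsilon_u}$ by $\Phi_{i_u}^{\epsilon_u}$ preserves the state-sum mechanism but contributes an extra scalar $\epsilon(u)\psi(x_u,y_u)$ at that crossing, where $(x_u, y_u)$ are the under/over colors dictated by the coloring. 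Summing over $u$ and the colorings $\mathcal C$, then exchanging the order of summation, yields $\Phi^1_\phi(K) = \sum_{\mathcal C}\sum_u \epsilon(u)\psi(x_u, y_u)$, and combining with the degree-zero part gives the proposition.

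The main obstacle I anticipate is keeping the sign and orientation conventions straight at negative crossings: verifying that the scalar $-\psi(z,x)$ arising from $\hat\phi_1$ genuinely corresponds to $\epsilon(u)\psi(x_u, y_u)$ under the $(x_u, y_u)$ convention of Figure~\ref{coloring} requires carefully matching the element $z$ with the diagrammatic under-arc at a negative crossing. Once this dictionary is in place, the remainder of the argument parallels the classical derivation of the quandle 2-cocycle invariant as a state sum on a closed braid, and no further subtleties arise.
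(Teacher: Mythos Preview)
Your proposal is correct and follows essentially the same route as the paper's proof: both reduce the invariant to $\tr(\Psi^1_\phi(b_m))$ via the collapse of the $\mu$-terms, expand $\Psi^1_\phi$ as a sum over crossings with a single $\phi$ (or $\hat\phi_1$) inserted, and use the quandle basis to turn the trace into a state sum over colorings of the closure. The only point the paper handles slightly more explicitly is the dual-basis pairing computation, but your identification of the negative-crossing scalar via Lemma~\ref{lem:Rinverse} covers the same ground.
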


\begin{proof}
Let ${\cal C}$ be a coloring of a diagram $D$ of a knot $K$ by a quandle $Q$. 
 Let $D$ be a diagram in a closed braid form of an $n$-braid $b_m$ of $K$. 
As in the proof of Lemma~\ref{lem:QEYBO}, we use a basis of $V$ denoted by $e_x$, where $x\in Q$, dual basis
vectors  $e_x^*$ and the pairing 
$$\langle e_{x_1}\otimes \cdots \otimes e_{x_k}|e^*_{y_k} \otimes \cdots \otimes e^*_{y_1} \rangle = \prod_i \delta_{x_i y_i}. $$

At a crossing with a coloring pair $(x,y)$ as in Figure~\ref{coloring}, the map 
$$\tilde R (x \otimes y) = R (x \otimes y) + \hbar \psi (x \otimes y)  R (x \otimes y)$$
 is assigned, and the composition is taken 
over all crossings, as in Definition~\ref{def:deg1_operator}.
 For the inverse, 
 $$\tilde R^{-1} (x \otimes y) = R^{-1}  (x \otimes y) - \hbar \psi(y*x, y) R^{-1}   (x \otimes y) $$
  is assigned by Lemma~\ref{lem:Rinverse}. 
More specifically, suppose that $K$ has a braid presentation as the closure of 
 $b_m = \sigma_{i_1}^{\epsilon(i_1)}\cdots \sigma_{i_k}^{\epsilon(i_k)}$. 
Recall that we make  the assignments $\sigma_i^{\pm}  \mapsto \mathbb 1^{\otimes ( i-1 )}\otimes \tilde R^\pm \otimes \mathbb 1^{\otimes (m-i-1)}$. 
We consider the terms of $\hbar$-degree at most one, so that
we take the terms where at most  one of the braid generators receive the assignment 
$\sigma_i^{\pm}  \mapsto \mathbb 1^{\otimes ( i-1)}\otimes \tilde R^\pm \otimes \mathbb 1^{\otimes (m-i-1)}$
and others $\sigma_i^{\pm}  \mapsto \mathbb 1^{\otimes (i-1)}\otimes  R^\pm \otimes \mathbb 1^{\otimes (m-i-1)}$.
	Then, $\Phi^1_\phi(K)$ 
	is written as 
	\begin{eqnarray*}
			\Phi^1_\phi(K) 
			&=& {\rm Tr}(\Psi^1_\phi(b_m))\\
			&=& 
						\sum_{x_1, \ldots, x_m\in Q} \langle \Psi^1_\phi(b_m)(e_{x_1}\otimes \cdots \otimes e_{x_m})|e^*_{x_m}\otimes \cdots \otimes e^*_{x_1}\rangle\\
			&=& 
			\sum_{x_1, \ldots, x_m \in Q}\sum_{r=1}^k \epsilon(i_r)
			 \langle \sigma_{i_k}^{\epsilon(i_k)} \cdots  \mathbb (1^{\otimes (i_r-1) }\otimes \phi \otimes \mathbb 1^{\otimes (m-i_r-1) })  \cdots \sigma_{i_1}^{\epsilon(i_1)}(e_{x_1}\otimes \cdots \otimes e_{x_m})\\
			&&\hspace{3cm} |e^*_{x_m}\otimes \cdots \otimes e^*_{x_1}\rangle\\
	&=&
	\sum_{x_1, \ldots, x_m\in Q}\sum_{r=1}^k \epsilon(i_r)\psi(y^{(r)}_{i_r},y^{(r)}_{i_{r+1}}) \langle e_{y^{(k)}_1}\otimes \cdots \otimes e_{y^{(k)}_m}|e^*_{x_m}\otimes \cdots \otimes e^*_{x_1}\rangle\\
			&=& 
			 \sum_{\mathcal C\in {\rm Col}_Q(D)} \sum_{r=1}^k \epsilon(i_r)\psi(y^{(r)}_{i_r},y^{(r)}_{i_{r+1}}), 
	\end{eqnarray*}
	where we have indicated by $y^{(r)}_h$ 
	 the coloring of the $h^{\rm th}$ string after applying the first $r$ 
	 operators in the braid presentation of $b_m$, so that $y^{(k)}_{i_r}$ 
	 is the the color of the output of $\Psi_\phi(b_m)$. 
			The last expression represents  $\sum_{\mathcal C  \in {\rm Col}_Q(D)}  \sum_{x \in {\rm Cv} } x \pmod{\hbar^2} $.
	Since  $\Phi_\phi^0 (K) = | {\rm Col}_Q(D) | $ without cocycle contributions in the above computations, we obtain the formula in the statement.
			\end{proof}

\begin{remark} 
{\rm 
Recall that the quandle cocycle invariant is defined by  
$$\Phi_\psi^{\rm Q} (D)=  \sum_{{\cal C} \in {\rm Col}_Q(D)} \prod_{u} \psi (x_u, y_u)^{\epsilon (u) } . $$
The difference to $\Phi^1_\phi$ is that in the quandle invariant the Boltzmann weights associated to each coloring $\mathcal C$ are multiplied together in $A$, and then the corresponding elements in $\mathbb Z[A]$ for each coloring are summed together using the additive operation of the group ring; however, in the quantum cocycle invariant the Boltzmann weights for each coloring are summed in $\mathbb Z[A]$ using the additive operation of the group ring. If for each fixed coloring we multiply the elements of $\mathbb Z[A]$, we obtain the corresponding summand of the quandle cocycle invariant. 

In the example that follows, the quantum cocycle  invariant turns out to be stronger than the quandle cocycle  invariant.
It is desirable to know more explicit and detailed relations between the two. 
For example, the following questions arise: 
 Are there examples of knots with the same quantum invariant but distinct quandle cocycle invariant?
 If the quantum version is always stronger, can we characterize the possible values  of quantum cocycle invariants with a given quandle cocycle invariant? 
 }
\end{remark}

	\begin{figure}[htb]
		\begin{center}
			\includegraphics[width=2.2in]{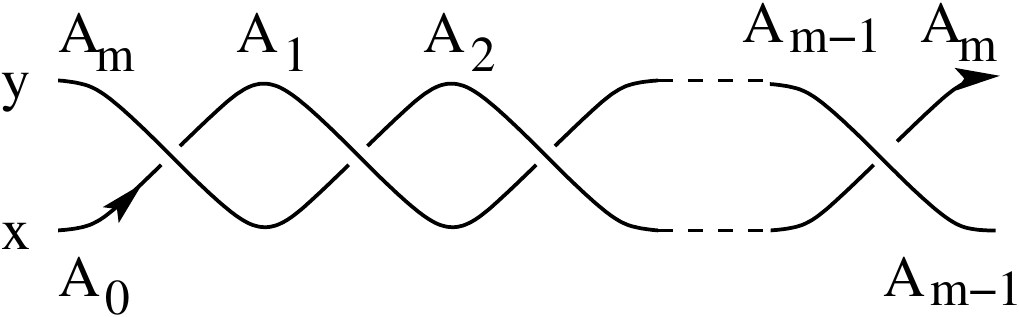}
		\end{center}
		\caption{}
		\label{Tm}
	\end{figure}

	\begin{example}\label{ex:quandleinv}
		{\rm 
			We compare the values of quandle cocycle invariant and the corresponding quantum cocycle invariant values
			for $(2,m)$-torus knots $T_m$ 
			with the quandle $Q=\Z_2[t]/(t^2 + t + 1)$.
			We represent elements of $Q$ by $\{0,1,t, 1+t\}$. 
			 In \cite{CJKS}, it was shown that the following gives a non-trivial 2-cocycle  with the coefficient group $\Z_2$: 
			$$\psi(x,y)=\sum_{(a,b)}  \chi_{(a,b)},$$
			where $\chi$ is Kronecker's delta and the sum ranges over 
			all $a,b \in Q$ such that $a\neq b$ and $a,b \in \{ 0,1,1+t\}$. 
			Let $T_m$ be a torus knot (or a link) with $m$ crossings that is the closure of the diagrams  depicted in Figure~\ref{Tm}. The arcs are labeled by $A_i$ for $i=0, \ldots, m$ in the figure, and let $u_j$ be the crossings from left to right of the figure.
			Although the cocycle invariant is known for $T_m$ with this $Q$, we present computations here to observe the contrast of computations to the quantum cocycle  invariant.
			
			Let $x, y$ be a pair of colors assigned to the left arcs labeled by $A_0$ and $A_m$
			as indicated in the figure.
			Then one computes that the colors  are successively
			$$ \mathcal C (A_1)= tx + (1+t)y, \quad  \mathcal C (A_2)=x, \quad \mathcal C (A_3)= y, \quad 
			\mathcal C (A_4)= tx + (1+t)y, $$
			and this pattern repeats with period $3$. Hence $T_m$ is 
			nontrivially  colored  if and only if $m$ is a multiple of 3, $m=3\ell$. 
			If $m$ is not divisible by 3, then the quandle cocycle invariant is $\Phi^{\rm Q}_\psi (T_m)=4$.

			A coloring of $T_m$  for $m=3 \ell$ is trivial if and only if $x=y$. 
			We first examine the cocycle invariant for $T_3$ (trefoil).
			For distinct $(x,y)$, set $z=tx + (1+t)y$, which is the color of $A_2$. 
			The cocycle values for $u_i$ for $i=1,2,3$ are $\psi(x,y)$, $\psi(y,z)$ and $\psi(z,x)$, respectively, 
			and this pattern is repeated for more crossings. 
			
			By direct computations for 12 pairs $(x,y)$, $x \neq y$, for every ordered triple $(x,y,z)$, the number of $t \in Q$ 
			among $(x,y,z)$ is either 0 or 1. 
			If the number of $t$ among $\{ x,y,z \}$ is 0, then 
			from the definition $\psi(x,y)=\sum_{(a,b)}  \chi_{(a,b)}$ summed over $a,b \in \{ 0,1,1+t\}$, all of 
			$\psi(x,y)$, $\psi(y,z)$, and  $\psi(z,x)$ take the value 1, hence 
			$$\psi(x,y)+\psi(y,z)+ \psi(z,x) = 1 \in \Z_2  $$
			in additive notation.
			If the number of $t$ is 1, then exactly one of $\psi(x,y)$, $\psi(y,z)$, and  $\psi(z,x)$ is 1 and the other two take the value  0. Hence again we have $\psi(x,y)+\psi(y,z)+ \psi(z,x) = 1 \in \Z_2  $. 
			
			With a multiplicative generator $\zeta $ of $\Z_2$, $\psi(x,y)\psi(y,z) \psi(z,x) =\zeta$,
			so that every nontrivial coloring contributes $\zeta$ to $\prod_{u} \psi(x_u, y_u)^{\epsilon (u) } $.
			Hence  we obtain the cocycle invariant value  $\Phi^{\rm Q}_\psi (T_3)= 4 + 12 \zeta$. Since the colorings and cocycle values repeat, we obtain 
			$$\Phi^{\rm Q}_\psi (T_{3 \ell} ) = 4 + 12 \zeta$$
			for all positive integer $\ell$. 
					
			Next we compute the corresponding quantum cocycle invariant.
			Let $\phi(x \otimes y)=\psi (x \otimes  y) R(x \otimes y)$ as in Lemma~\ref{lem:Q2toYB2}.
			In the above analysis of colorings, among the 12 pairs $(x,y)$, there are two cases: 
			\begin{eqnarray*}
				{\rm (Case \ 1)} & & 
				\psi(x,y)=\psi(y,z)= \psi(z,x) =1 , \\
				{\rm (Case \ 2)} & &  \{ \psi(x,y), \psi(y,z),  \psi(z,x) \} = \{ 0,0,1\} .
			\end{eqnarray*}
			where the numbers of colorings are 3 and 9, respectively.
			In (Case 1), the contribution to $\Phi^1_\phi (T_3)$ is $3 \hbar$, and in (Case 2), it is $\hbar$. 
			Hence in total the contribution
			to $\sum_u \epsilon(u) \psi(x_u, y_u)$  is $3 (3 \hbar) + 9 (\hbar) = 18 \hbar$.
			The trivial colorings give the contribution $4$.
			Hence by Proposition~\ref{pro:quandle_quantum} we obtain 
			$\Phi_\phi (T_3)= 4 + 18 \hbar$.
			
			For general $m=3 \ell$, the contributions increase according to the number of crossings, and 
			(Case 1) gives contribution $3 \ell \hbar$ and (Case 2) gives $\ell \hbar$.
			In total we obtain $3 (3\ell  \hbar) + 9 (\ell \hbar) = 18 \ell \hbar$ and 
			we obtain 
			$$\Phi_\phi (T_{3 \ell})= 4 + 18 \ell  \hbar . $$
					We note that for this particular example the quantum cocycle version gives a stronger invariant
			than the quandle cocycle invariant. Further studies on relations between these invariants are desirable.
			
		} 
	\end{example}

\section{Quantum cocycle invariants  via maxima/minima 
 } \label{sec:cupcap}

In this section we describe a construction of quantum cocycle invariants by deforming maxima and minima
for knot diagrams with a height function specified.
 Throughout this section, a height function is fixed on the plane where knot diagrams lie,
and diagrams are in general position, so that they have isolated maxima, minima, and double crossings.
Diagrams are unoriented.

Let $R$ be a YBO on $V$.
Let $\tilde R = \phi_0 + \hbar \phi_1$, where $\phi_0 := R$ and $\phi_1 := \phi$, be a deformed YBO. 
Denote by $\tilde R^{-1} = \hat \phi_0 + \hbar \hat \phi_1$ as before.
Let 
$\tilde \cup = \cup_0 + \hbar \cup_1$, where $\cup_0 := \cup$ and $\cup_1: V \otimes V \rightarrow V$ is a pairing.
Similarly let  $\tilde \cap = \cap_0 + \hbar \cap_1$, where $\cup_0 := \cap$ and $\cap_1:  V  \rightarrow V \otimes V $ is a copairing.
Suppose that  $\cup$ and $\cap$ satisfy the switchback, passcup, and passcap properties.
Below we use the notation $\Gamma=\Gamma_1^2$ from Definition~\ref{def:gamma}.

\begin{theorem} \label{thm:cupcap}
The deformed YBO $\tilde R$,  pairing  $\tilde \cup$ and copairing $\tilde \cap$ on $\tilde V =  \mathbb k[[\hbar]]/(\hbar^2) \otimes V $
satisfy the switchback, passcup, and passcap properties if 
and only if the following equations hold.
\begin{eqnarray}
\sum_{(i,j) \in \Gamma} (\cup_i \otimes {\mathbb 1}) ( {\mathbb 1} \otimes \cap_j) 
&=&  {\mathbb 1} \label{switch1} \\
\sum_{(i,j) \in \Gamma} ( {\mathbb 1} \otimes \cup_i)  (\cap_j \otimes {\mathbb 1}) 
&=&  {\mathbb 1} \label{switch2} \\
\sum_{(i,j) \in \Gamma} ( {\mathbb 1} \otimes \cup_i) (\phi_j \otimes  {\mathbb 1} )
&=& \sum_{(i,j) \in \Gamma} (\cup_i \otimes {\mathbb 1}) ( {\mathbb 1} \otimes \hat \phi_j ) \label{passcup1} \\
\sum_{(i,j) \in \Gamma} ( {\mathbb 1} \otimes \cup_i) ( \hat \phi_j \otimes  {\mathbb 1} )
&=& \sum_{(i,j) \in \Gamma} (\cup_i \otimes {\mathbb 1}) ( {\mathbb 1} \otimes  \phi_j ) \label{passcup2} \\
\sum_{(i,j) \in \Gamma} (\phi_i \otimes {\mathbb 1} ) ( {\mathbb 1} \otimes \cap_j) 
&=& \sum_{(i,j) \in \Gamma}( {\mathbb 1} \otimes \hat \phi_i) (\cap_j \otimes {\mathbb 1} )  \label{passcap1} \\
\sum_{(i,j) \in \Gamma} (\hat \phi_i \otimes {\mathbb 1} ) ( {\mathbb 1} \otimes \cap_j) 
&=& \sum_{(i,j) \in \Gamma}( {\mathbb 1} \otimes  \phi_i) (\cap_j \otimes {\mathbb 1} )  \label{passcap2}
\end{eqnarray}

Moreover, if 
$\tilde \phi$, $\tilde \cup$ and $\tilde \cap$ 
satisfy the switchback, passcup and passcap identities, then they 
define  a knot invariant 
 $$\Phi^{\rm M}_{(\tilde R, \tilde \cup, \tilde \cap)}(K) = \Phi^{{\rm M} 0}_{(\tilde R, \tilde \cup, \tilde \cap)}(K) + \Phi^{{\rm M} 1}_{(\tilde R, \tilde \cup, \tilde \cap)}(K)$$ up to regular isotopy
(equivalence without type I Reidemeister moves).

If in addition 
$$ \sum_{(i,j) \in \Gamma}\cup_j  \phi_i =  \cup_1, \quad  \sum_{(i,j) \in \Gamma} \phi_j \cap_i =  \cap_1 $$
are satisfied, it provides a knot invariant up to isotopy.
\end{theorem}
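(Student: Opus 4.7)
The plan is to handle all three assertions by a common device: work modulo $\hbar^2$ in $\tilde{\mathbb k}=\mathbb k[[\hbar]]/(\hbar^2)$, expand each defining identity for the deformed data $(\tilde R,\tilde\cup,\tilde\cap,\tilde R^{-1})$, and separate by $\hbar$-degree. For the equivalence, I would substitute $\tilde\cup=\cup_0+\hbar\cup_1$, $\tilde\cap=\cap_0+\hbar\cap_1$, $\tilde R=\phi_0+\hbar\phi_1$ and $\tilde R^{-1}=\hat\phi_0+\hbar\hat\phi_1$ with $\hat\phi_0=R^{-1}$, $\hat\phi_1=-R^{-1}\phi R^{-1}$, into each of the switchback, passcup and passcap identities for the deformed data. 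The constant term in $\hbar$ reproduces the corresponding undeformed identity, which holds by hypothesis, while the linear coefficient in $\hbar$ consists of exactly the cross terms in which a single operator is replaced by its deformation component; indexing these cross terms by $\Gamma=\Gamma_1^2$ gives precisely Equations~\eqref{switch1}--\eqref{passcap2}. Conversely, if these degree-$1$ equations hold, the full identities for the tilded data follow since all higher-order terms vanish modulo $\hbar^2$.

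For regular isotopy invariance, I would invoke the general construction recalled in Section~\ref{sec:prelim} (following \cite{K&P}): a module equipped with a YBO together with a pairing and copairing satisfying switchback, passcup, and passcap yields a regular isotopy invariant of unoriented diagrams by reading each diagram as a linear scalar map. Applied to the module $\tilde V$ over $\tilde{\mathbb k}$, the operator $\tilde R$ is a YBO on $\tilde V$ by Lemma~\ref{lem:YB2toR}, and $\tilde\cup,\tilde\cap$ satisfy the three diagrammatic identities by the first part of the theorem. Hence any diagram of $K$ determines an invariant element of $\tilde{\mathbb k}$, which we expand as $\Phi^{{\rm M}0}_{(\tilde R,\tilde\cup,\tilde\cap)}(K)+\hbar\,\Phi^{{\rm M}1}_{(\tilde R,\tilde\cup,\tilde\cap)}(K)$; since the regular isotopy invariance holds as an identity in $\tilde{\mathbb k}$, both components are individually regular-isotopy invariants.

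For the additional conditions yielding full isotopy invariance, the standard argument shows that a type~I Reidemeister move at a local maximum or minimum introduces the composition $\cup R$ or $R\cap$ into the diagrammatic evaluation, so invariance under Reidemeister~I in the undeformed theory amounts to $\cup R=\cup$ and $R\cap=\cap$. Applied to the deformed data, we need $\tilde\cup\,\tilde R=\tilde\cup$ and $\tilde R\,\tilde\cap=\tilde\cap$ on $\tilde V$. Computing $(\cup_0+\hbar\cup_1)(\phi_0+\hbar\phi_1)\equiv \cup_0\phi_0+\hbar(\cup_0\phi_1+\cup_1\phi_0)\pmod{\hbar^2}$, the degree-$0$ component reduces to the assumed $\cup R=\cup$, and the degree-$1$ component is exactly the hypothesis $\sum_{(i,j)\in\Gamma}\cup_j\phi_i=\cup_1$; the copairing equation is symmetric. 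The main technical care is required in the first step, where the passcup and passcap identities involve both $\tilde R$ and $\tilde R^{-1}$ and the explicit formula for $\hat\phi_1$ must be correctly tracked so that Equations~\eqref{passcup1}--\eqref{passcap2} emerge with the right pairings of $\phi$ against $\hat\phi$; once this bookkeeping is fixed, each of the remaining verifications is a direct expansion.
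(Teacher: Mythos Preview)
Your proposal is correct and follows essentially the same approach as the paper's own proof: expand the deformed identities modulo $\hbar^2$, observe that the degree-$0$ terms reproduce the undeformed hypotheses, read off the degree-$1$ coefficients as Equations~\eqref{switch1}--\eqref{passcap2}, and then invoke the standard maxima/minima construction over $\tilde V$ (together with Lemma~\ref{lem:YB2toR}) for the invariance statements. The paper's proof is extremely terse, saying only that the properties on $\tilde V$ are equivalent to ``the corresponding conditions on infinitesimal part'' and that the type~I condition ``translates to the stated condition on the degree one term of $\hbar$''; your write-up simply fills in this bookkeeping explicitly, including the explicit form of $\hat\phi_1$ and the degree-by-degree computation of $\tilde\cup\,\tilde R=\tilde\cup$.
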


\begin{proof}
The YBO $\hat \phi$ 
on $\tilde V \otimes \tilde V$, where $\tilde V = V\otimes \mathbb k[[\hbar]]//(\hbar^2)$, 
together with $\tilde \cup$ and $\tilde \cap$ satisfy the switchback, passcup, and passcap properties 
if they satisfy the corresponding conditions on infinitesimal part (degree one part with respect top $\hbar$),
which are computed  in Equations (\ref{switch1}) through (\ref{passcap2}).
Then the composition corresponding to a given knot diagram with a height function defines a knot invariant
up to regular isotopy with these operators.
For isotopy, the condition for type I move translates to the stated condition on the degree one term of $\hbar$.
\end{proof}

\begin{figure}[htb]
\begin{center}
\includegraphics[width=3.5in]{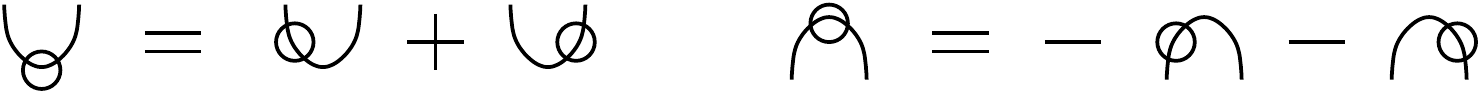}
\end{center}
\caption{}
\label{swd1}
\end{figure}

For the rest of the section, we show that the cobounded cocycles give trivial infinitesimal part, in parallel to
Lemma~\ref{lem:cob_EYBO} and Proposition~\ref{prop:cob}.

\begin{lemma}\label{lem:cupcapcob}
Let $\phi=\delta^1(f)$  be a cobounded YB 2-cocycle
and $\tilde R=R + \hat \phi$ be the deformed YBO on $\tilde V$.
Let $\tilde \cup = \cup + \hbar \bar \cup$ and $\tilde \cap = \cap  + \hbar \bar \cap$ be a deformed 
pairing and copairing on $\tilde V$, respectively, where
$$\bar \cup := \cup (f \otimes \mathbb 1 + \mathbb 1 \otimes f ) \quad 
{\rm and } \quad \bar \cap :=  (- f \otimes \mathbb 1 - \mathbb 1 \otimes f) \cap $$
as represented by diagrams in Figure~\ref{swd1} left and right.
Then $(\tilde R, \tilde \cup, \tilde \cap)$ satisfy the conditions in Theorem~\ref{thm:cupcap}.
\end{lemma}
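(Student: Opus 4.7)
The plan is to verify each of the six Equations~\eqref{switch1}--\eqref{passcap2} directly by expanding the deformed operators and tracking the $\hbar^1$ contributions, in the same spirit as the proofs of Lemma~\ref{lem:cob_EYBO} and Proposition~\ref{prop:cob}. First I would record the four-term expansion of $\phi=\delta^1(f)=R(f\otimes\mathbb 1)+R(\mathbb 1\otimes f)-(f\otimes\mathbb 1)R-(\mathbb 1\otimes f)R$, together with the identity $\hat\phi=-R^{-1}\phi R^{-1}=\delta^1_{R^{-1}}(f)$ already computed inside the proof of Lemma~\ref{lem:cob_EYBO}, and the two-term expansions $\bar\cup=\cup(f\otimes\mathbb 1)+\cup(\mathbb 1\otimes f)$ and $\bar\cap=-(f\otimes\mathbb 1)\cap-(\mathbb 1\otimes f)\cap$. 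Diagrammatically each of these summands has a clean interpretation: insertion of $f$ on one of the arcs meeting the crossing/cup/cap, with a positive sign on incoming (upper) arcs and a negative sign on outgoing (lower) arcs.

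Next, for each of the six identities I would draw the composite diagram on each side and isolate the \emph{internal} arc where two diagrammatic features (a crossing and a cup, or a cup and a cap) share an edge. The two summands of the deformation that place $f$ on this common arc---one coming from $\phi$ (or $\hat\phi$) and one coming from $\bar\cup$ (or $\bar\cap$)---carry opposite signs and therefore cancel. This is the exact cancellation mechanism already exploited in the proof of Proposition~\ref{prop:cob}, and it is visually encoded by the diagrams of Figure~\ref{swd1}. After this cancellation, the surviving $\hbar^1$ terms on both sides are precisely insertions of $f$ on external arcs of the composite diagram.

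To conclude each identity, I would then match the external terms on the left- and right-hand sides by applying the undeformed switchback, passcup, or passcap identity with $f$ pre- or post-composed on the appropriate strand. For instance, in \eqref{passcup1} the left-hand side $(\mathbb 1\otimes\cup)(\phi\otimes\mathbb 1)+(\mathbb 1\otimes\bar\cup)(R\otimes\mathbb 1)$ and the right-hand side $(\cup\otimes\mathbb 1)(\mathbb 1\otimes\hat\phi)+(\bar\cup\otimes\mathbb 1)(\mathbb 1\otimes R^{-1})$ each reduce, after the internal cancellation, to four terms indexed by the three external incoming strands (with a $+$) and the single external outgoing strand (with a $-$); these are pairwise identified by applying the undeformed passcup identity to $(f\otimes\mathbb 1\otimes\mathbb 1)$, $(\mathbb 1\otimes f\otimes\mathbb 1)$, and $(\mathbb 1\otimes\mathbb 1\otimes f)$ insertions on the inputs and to the post-composition by $f$ on the output. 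The switchback identities \eqref{switch1}, \eqref{switch2} follow from the same template applied to the zigzag composites $(\cup\otimes\mathbb 1)(\mathbb 1\otimes\cap)$ and $(\mathbb 1\otimes\cup)(\cap\otimes\mathbb 1)$, and the passcap identities \eqref{passcap1}, \eqref{passcap2} from the cap-crossing composites, mutatis mutandis.

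The main work is book-keeping rather than any conceptual obstacle: one must carefully identify, for each of the six identities, which arc of the left-hand diagram corresponds to which arc of the right-hand diagram under the undeformed move, and verify signs so that the internal contributions cancel and the external contributions are correctly paired. The sign conventions in the definition of $\bar\cap$ (a global minus as opposed to the plus in $\bar\cup$) are precisely what is needed to make the cancellation on the internal arc go through in the switchback and passcap cases; confirming that this single sign choice simultaneously resolves all three families of identities is the only genuinely delicate point of the argument.
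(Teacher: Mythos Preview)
Your proposal is correct and follows essentially the same approach as the paper. The paper's proof is given largely diagrammatically (Figures~\ref{swd2} and~\ref{cancelcap}), but the underlying mechanism is exactly what you describe: expand $\phi=\delta^1(f)$ and $\hat\phi=\delta^1_{R^{-1}}(f)$ into insertions of $f$ on the four arcs of a crossing, expand $\bar\cup$ and $\bar\cap$ similarly, cancel the pair of terms on the shared internal arc, and then identify the surviving external-arc terms on the two sides via the undeformed switchback/passcup/passcap identities.
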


\begin{proof}
The switchback property Equation~\eqref{switch2} is checked directly as diagrammatically represented in Figure~\ref{swd2}. 
Equation~\eqref{switch1} is similarly checked by their mirror diagrams.

A direct computation as in the proof of Lemma~\ref{lem:cob_EYBO} shows that $\tilde R^{-1} = R^{-1} + \hbar \hat \phi$, where $\hat \phi = \delta^1_{R^{-1}}(f)$ is the image of the first YB coboundary of the inverse YBO $R^{-1}$. A direct computation for the terms of degree $1$ in $\hbar$ for the passcap and passcup moves using $\tilde R$ and $\tilde R^{-1}$ shows that the equations hold. 
A part of a passcap move is depicted in Figure~\ref{cancelcap}. 
In the figure, one of the terms $R(\mathbb 1 \otimes f)$ of the coboundary from $\phi$ is depicted 
as a circle in the middle edge of the right-hand side, and another term $-(f \otimes \mathbb 1 )\cap $ from
$\bar \cap$ is depicted as another circle on the same edge, and seen to cancel.
The other terms involving $f$ are assigned on the end edges.
The passcap equality of $R$ and $\cap$ imply the corresponding  equality for $\tilde R$ and $\tilde \cap$.
\end{proof}

\begin{figure}[htb]
\begin{center}
\includegraphics[width=3.5in]{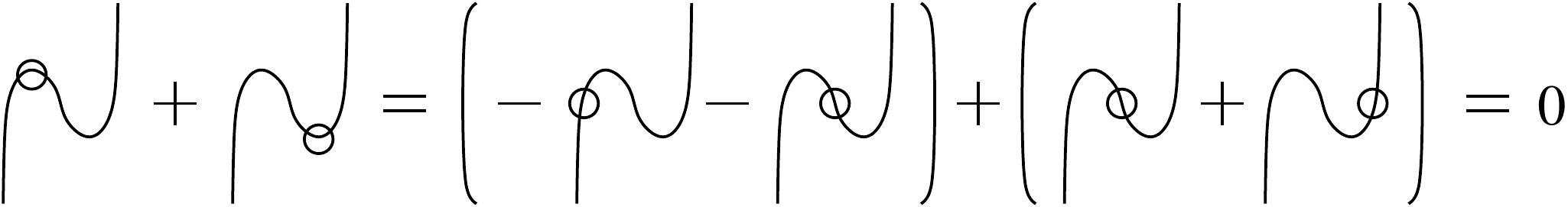}
\end{center}
\caption{}
\label{swd2}
\end{figure}

By  Theorem~\ref{thm:cupcap}, the deformed 
YBO and (co)pairings $(\tilde R, \tilde \cup, \tilde \cap)$ of Lemma~\ref{lem:cupcapcob}
defines a regular isotopy invariant 
$\Phi_{(\tilde R, \tilde \cup, \tilde \cap)}^{\rm M} =\Phi_{(\tilde R, \tilde \cup, \tilde \cap)}^{\rm M 0} + \hbar \Phi_{(\tilde R, \tilde \cup, \tilde \cap)}^{\rm M1}  $.

\begin{figure}[htb]
\begin{center}
\includegraphics[width=3.5in]{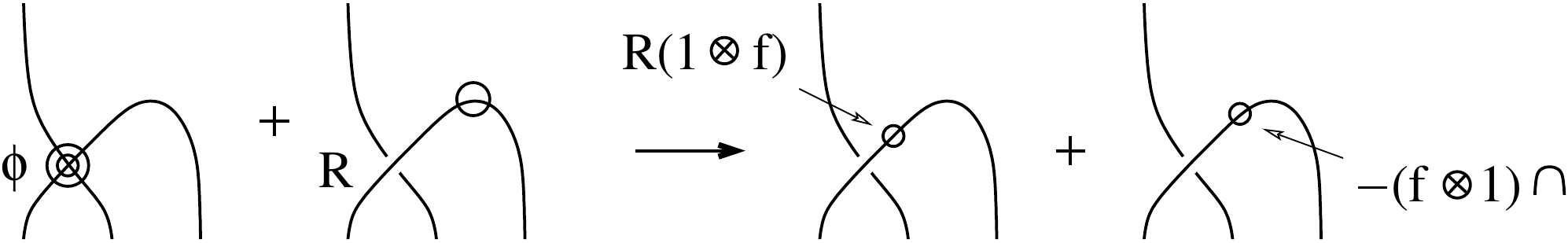}
\end{center}
\caption{}
\label{cancelcap}
\end{figure}

\begin{proposition}\label{prop:cupcapcob}
The infinitesimal part of the  invariant $\Phi_{(\tilde R, \tilde \cup, \tilde \cap)}^{\rm M1}  $
is trivial for a cobounded YB 2-cocycle $\phi=\delta^1(f)$. 
In other words, it holds that $\Phi_{(\tilde R, \tilde \cup, \tilde \cap)}^{\rm M} =\Phi_{(\tilde R, \tilde \cup, \tilde \cap)}^{\rm M 0}$.
\end{proposition}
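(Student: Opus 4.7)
The plan is to adapt the cancellation argument used in the proof of Proposition~\ref{prop:cob} to the current maxima/minima setting. I fix a generic diagram $D$ of $K$ with respect to the height function, so that $D$ is cut by horizontal slices into a composition of elementary pieces: crossings (assigned $\tilde R$ or $\tilde R^{-1}$), minima (assigned $\tilde \cup$), and maxima (assigned $\tilde \cap$). Expanding the resulting operator modulo $\hbar^{2}$, the degree-one part $\Phi^{\rm M 1}_{(\tilde R,\tilde\cup,\tilde\cap)}(K)$ is a sum, over all elementary pieces of $D$, of the operator obtained by replacing exactly that piece by its infinitesimal component ($\phi$ at a positive crossing, $\hat \phi = \delta^{1}_{R^{-1}}(f)$ at a negative crossing, $\bar\cup$ at a minimum, $\bar\cap$ at a maximum) and keeping the other pieces undeformed.

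Next, I would expand each infinitesimal piece via its explicit formula in terms of $f$: the coboundary gives $\phi=R(f\otimes\mathbb 1)+R(\mathbb 1\otimes f)-(f\otimes\mathbb 1)R-(\mathbb 1\otimes f)R$ (and the analogous formula for $\hat\phi$ using $R^{-1}$), while the deformed (co)pairings satisfy $\bar\cup=\cup(f\otimes\mathbb 1)+\cup(\mathbb 1\otimes f)$ and $\bar\cap=-(f\otimes\mathbb 1)\cap-(\mathbb 1\otimes f)\cap$. After this expansion, each summand in $\Phi^{\rm M 1}$ is the undeformed operator of $D$ with a single copy of $f$ inserted on one arc of $D$, carrying a sign determined as follows: $+$ whenever $f$ sits on an arc whose bottom endpoint is the substituted piece (that is, a top/incoming arc of a crossing, or an incoming arc of a minimum), and $-$ whenever it sits on an arc whose top endpoint is the substituted piece (a bottom/outgoing arc of a crossing, or an outgoing arc of a maximum). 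This is the content made diagrammatically explicit by Figures~\ref{YBdiff1}, \ref{swd1}, and~\ref{cancelcap}.

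Finally, I organize the sum according to the arc on which $f$ is placed. Each arc $a$ of $D$ has exactly one top endpoint and one bottom endpoint, and each endpoint is either a crossing or an extremum. By the sign rule above, the contribution from the top endpoint places $f$ on $a$ with sign $-$, while the contribution from the bottom endpoint places $f$ on $a$ with sign $+$; both produce the same diagrammatic operator, so the pair cancels. Summing over all arcs gives $\Phi^{\rm M 1}_{(\tilde R,\tilde\cup,\tilde\cap)}(K)=0$, which combined with Theorem~\ref{thm:cupcap} and Lemma~\ref{lem:cupcapcob} yields the claimed equality $\Phi^{\rm M}_{(\tilde R,\tilde\cup,\tilde\cap)}=\Phi^{\rm M 0}_{(\tilde R,\tilde\cup,\tilde\cap)}$.

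The main obstacle is the sign bookkeeping, in particular checking that $\hat\phi$ at a negative crossing obeys the same top$\,=\,+$, bottom$\,=\,-$ rule as $\phi$; this is guaranteed by the identity $\hat\phi=\delta^{1}_{R^{-1}}(f)$ established inside the proof of Lemma~\ref{lem:cob_EYBO}, which makes the four-term structure of $\hat\phi$ structurally identical to that of $\phi$. A secondary point to verify is that the over/under role of $a$ at a crossing is irrelevant to the sign (only the horizontal level of the endpoint matters), which is immediate from the coboundary formula since $\delta^{1}(f)$ treats the two input strands symmetrically.
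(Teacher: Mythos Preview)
Your proposal is correct and follows essentially the same approach as the paper: both arguments expand the degree-one part as a sum of diagrams with a single $f$ placed on one arc, and cancel contributions in pairs coming from the two endpoints of each arc. The paper organizes the cancellation by case-splitting on whether an arc is bounded by two crossings, by a crossing and an extremum, or by two extrema (referring to Figures~\ref{cancel}, \ref{cancelcap}, \ref{swd2} respectively), whereas you treat all arcs uniformly via the single sign rule; your presentation is in fact slightly cleaner, but the underlying mechanism is identical.
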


\begin{proof}
This is proved in a manner similar to Proposition~\ref{prop:cob}. 
There are three types of arcs in a given diagram in this case: (1) internal edge bounded by crossings,
(2) edges with one end point on a crossing, other on an extremum, and (3) both ends on extrema. 
For the internal edge (1), cancelation happens as in Figure~\ref{cancel}.
Edges of case (2) happens where diagrams are as in passcup or passcap moves, and cancelation happens as in Figure~\ref{cancelcap}. Case (3) happens as in figures of the switchback move, and cancelations happen as in Figure~\ref{swd2}. Hence the same argument as in Proposition~\ref{prop:cob} applies. 
\end{proof}

\section{Examples via maxima/minima } \label{sec:bracket}

The Kauffman bracket polynomial \cite{K&P}
for knots up to regular isotopy is defined by the skein relation 
in Figure~\ref{skeinbracket}.
 The bracket value for the unknot diagram with no crossing is set to be $1$.
In this section we examine  invariants defined from YB $2$-cocycles and the bracket following the definition in Section~\ref{sec:cupcap}.
In this section we assume that $\mathbb k$ contains  $\sqrt{-1}$  and an invertible element $A$
that serves as a variable in the Kauffman bracket.

\begin{figure}[htb]
\begin{center}
\includegraphics[width=2.4in]{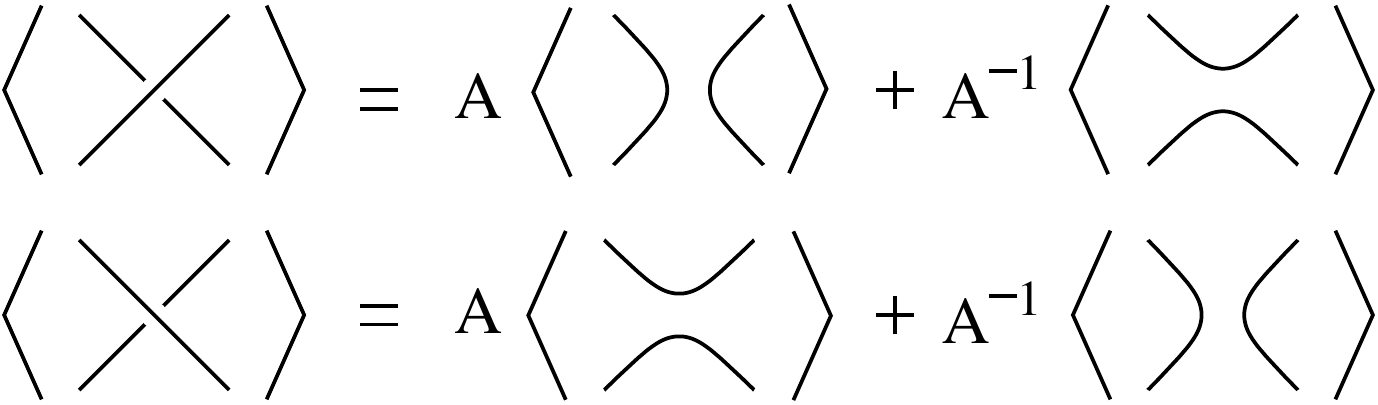}
\end{center}
\caption{}
\label{skeinbracket}
\end{figure}

Let $\cup$ and $\cap$ be pairing used in the bracket skein relation defined in \cite{K&P}
for basis elements $e_1$, $e_2$ of $V={\mathbb k}^{\otimes 2}$ by $\cup(e_i\otimes e_i)=0$ for $i=1,2$ and
$$\cup(e_1 \otimes e_2)=\sqrt{-1} A , \quad \cup(e_2 \otimes e_1)=- \sqrt{-1} A^{-1} . $$
The copairing is defined by 
$$\cap (1)=\sqrt{-1} A e_1 \otimes e_2 - \sqrt{-1} A^{-1} e_2 \otimes e_1. $$
 It is known \cite{K&P} that these satisfy the switchback, passcup, and passcap identities. 
The pairing is represented by a cup (a local minimum of a diagram), and 
the copairing is represented by a cap (a local maximum of a diagram) in Figure~\ref{skeinbracket}.
As a first approach we examine the case where the pairing and copairing are not deformed,
so that $\bar \cup = 0 = \bar \cap$.

\begin{figure}[htb]
\begin{center}
\includegraphics[width=3.7in]{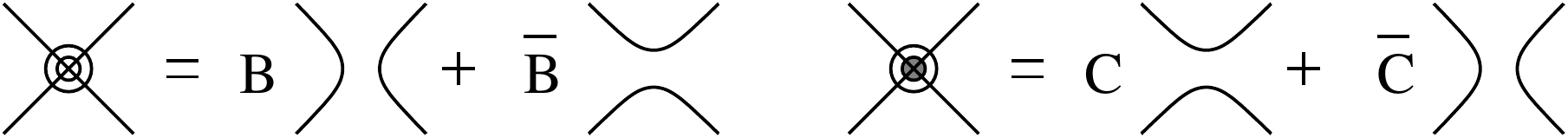}
\end{center}
\caption{}
\label{cocyskein}
\end{figure}

 Diagrammatic convention for a $2$-cocycles for $\phi_1$ and $\hat \phi_1$  are depicted in Figure~\ref{cocyskein1}, where a double circles represents $\phi$ and a double circles 
 with shaded inner circle represents $\hat \phi$.
 Let $B, \bar B, C, \bar C \in \mathbb k$. 
 We set $\phi= B \mathbb 1 + \bar B \cap \cup$ and 
 $\hat \phi= C  \cap \cup + \bar C \mathbb 1 $ as in Figure~\ref{cocyskein1}
  and examine this case.
  Thus with the above $\phi$, $\hat \phi $ and undeformed $\cup$ and $\cap$, our set up is
  $ \tilde R = R + \hbar \phi$ and $ \tilde R^{-1} = R^{-1} + \hbar \hat \phi$ modulo $ \hbar^2$ as described in the preceding section.
 
 In Figure~\ref{cocyskein1} and later figures, the square bracket indicates that the cocycle skein relations are to be applied locally, and the bracket on the right-hand side indicates that the bracket skein relations are to be applied to other parts of a given diagram. For simplicity, we drop brackets when the stages of computations are understood.

\begin{figure}[htb]
\begin{center}
\includegraphics[width=4.2in]{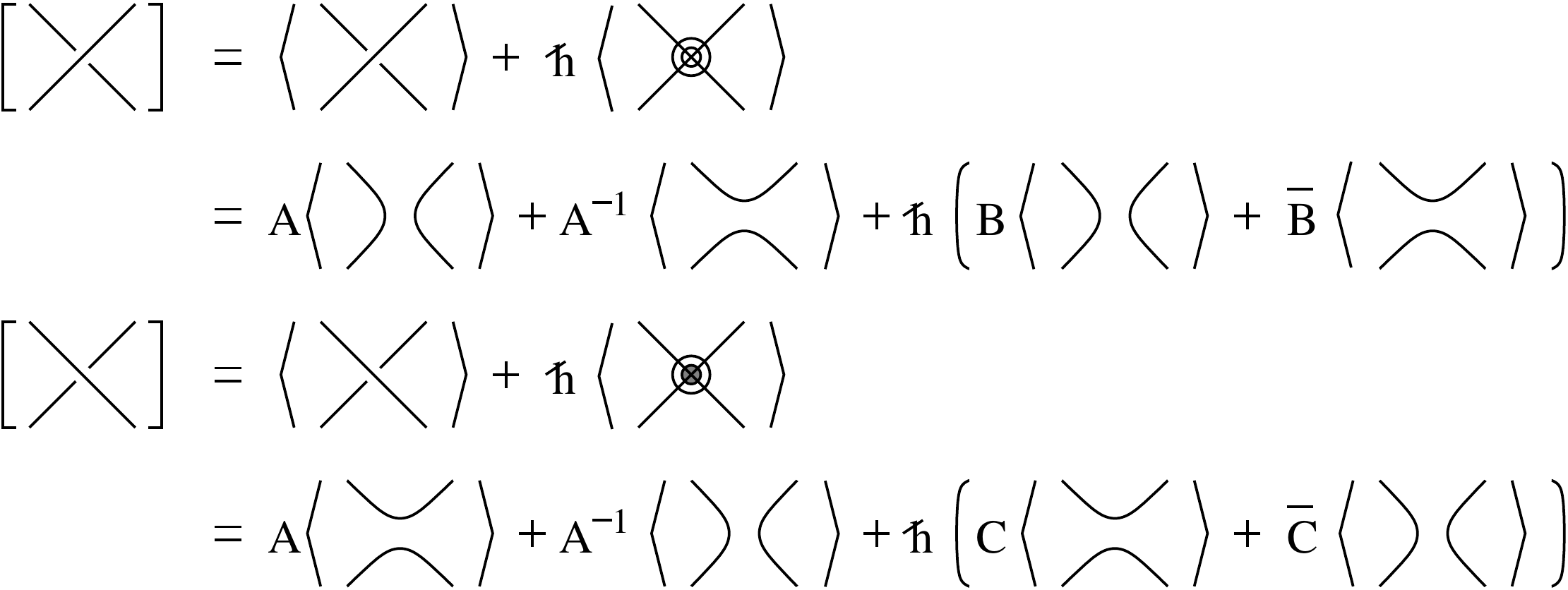}
\end{center}
\caption{}
\label{cocyskein1}
\end{figure}

\begin{lemma}\label{lem:B=C}
 The operators  $ \tilde R = R + \hbar \phi$  and $ \tilde R^{-1} = R^{-1} + \hbar \hat \phi$ satisfy 
 the passcup and passcap identities if and only if 
 $B=C$ and $\bar B = \bar C$.
\end{lemma}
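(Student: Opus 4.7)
The plan is to decompose the four passcup and passcap identities from Theorem~\ref{thm:cupcap} by the power of $\hbar$. Since in this setting $\cup_1 = 0$ and $\cap_1 = 0$, the degree-zero parts reduce to the classical passcup and passcap identities for the undeformed data $(R, \cup, \cap)$, which hold by the standing hypothesis that $\cup, \cap$ satisfy the switchback, passcup and passcap properties with respect to $R$. The content of the lemma therefore lies entirely in the degree-one parts, which take the form
\begin{eqnarray*}
(\mathbb{1} \otimes \cup)(\phi \otimes \mathbb{1}) &=& (\cup \otimes \mathbb{1})(\mathbb{1} \otimes \hat{\phi}), \\
(\mathbb{1} \otimes \cup)(\hat{\phi} \otimes \mathbb{1}) &=& (\cup \otimes \mathbb{1})(\mathbb{1} \otimes \phi),
\end{eqnarray*}
together with the two passcap analogues obtained by reading the corresponding diagrams upside down.

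The next step would be to substitute $\phi = B\, \mathbb{1} + \bar B\, \cap\cup$ and $\hat{\phi} = C\, \cap\cup + \bar C\, \mathbb{1}$ into each side and reduce using the switchback identity $(\mathbb{1} \otimes \cup)(\cap \otimes \mathbb{1}) = \mathbb{1}$ and its mirror. The crucial simplifications this yields are
\begin{eqnarray*}
(\mathbb{1} \otimes \cup)((\cap\cup) \otimes \mathbb{1}) &=& \cup \otimes \mathbb{1}, \\
(\cup \otimes \mathbb{1})(\mathbb{1} \otimes (\cap\cup)) &=& \mathbb{1} \otimes \cup,
\end{eqnarray*}
with the analogous collapses on the passcap side (obtained by interchanging the roles of $\cup$ and $\cap$). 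After these reductions, each side of each of the four equations becomes a linear combination of a single fixed pair of elementary operators: namely $\mathbb{1} \otimes \cup$ and $\cup \otimes \mathbb{1}$ for the passcup equations, and $\mathbb{1} \otimes \cap$ and $\cap \otimes \mathbb{1}$ for the passcap equations.

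Matching coefficients on the two sides of each of the four equations will then produce the same pair of scalar conditions $B = C$ and $\bar B = \bar C$, provided the pairs $\{\mathbb{1} \otimes \cup, \cup \otimes \mathbb{1}\}$ and $\{\mathbb{1} \otimes \cap, \cap \otimes \mathbb{1}\}$ are linearly independent in the relevant Hom spaces. This linear independence is easily verified by evaluating on basis tensors of $V^{\otimes 3}$ (respectively $V$) using the explicit Kauffman values of $\cup$ and $\cap$ recorded just before the lemma. The converse is then immediate, since the same calculation read in the opposite direction shows that $B = C$ and $\bar B = \bar C$ are sufficient for all four identities to hold.

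The main obstacle is bookkeeping rather than any hard computation: one must keep track of which summand of $\phi$ ($\mathbb{1}$ versus $\cap\cup$) matches with which summand of $\hat\phi$ across each of the four equations, and ensure that under the switchback collapses the $B$ and $\bar B$ contributions on one side line up with the $C$ and $\bar C$ contributions on the other side in a consistent way. This is cleanest to carry out diagrammatically in the spirit of Figure~\ref{cocyskein1}, where the two summands of $\phi$ and of $\hat\phi$ are drawn separately and the switchback reductions appear as visual simplifications on the page.
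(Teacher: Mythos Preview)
Your proposal is correct and follows essentially the same approach as the paper. The paper's proof is entirely diagrammatic, referring to Figure~\ref{passcupcap} where the eight sides of the four passcup/passcap identities are expanded via the skein relations for $\phi$ and $\hat\phi$ and simplified using switchback; your write-up is precisely the algebraic transcription of that picture, with the added care of noting the linear independence of $\{\mathbb 1\otimes\cup,\ \cup\otimes\mathbb 1\}$ (and its cap analogue) to justify the ``only if'' direction.
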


\begin{proof}
A diagrammatic proof is presented in Figure~\ref{passcupcap} for all identities involved.
The identities are (1)=(2), (3)=(4), (5)=(6) and (7)=(8). 
One obtains the stated conditions from these diagrams. 
This argument is similar to the bracket case. 
\end{proof}

\begin{figure}[htb]
\begin{center}
\includegraphics[width=6.5in]{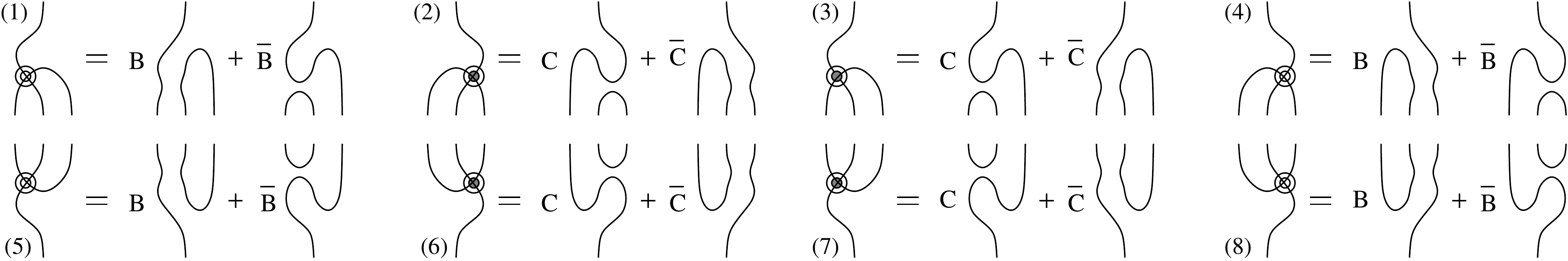}
\end{center}
\caption{}
\label{passcupcap}
\end{figure}

From here on we assume the conditions $B=C$ and $\bar B = \bar C$ in Lemma~\ref{lem:B=C}.

\begin{lemma} \label{lem:inverse}
Under the condition $B=C$ and $\bar B = \bar C$, 
the operators $ \tilde R = R + \hbar \phi$  and $ \tilde R^{-1} = R^{-1} + \hbar \hat \phi$ are inverses to each other if and only if $\bar B = - A^{-2} B$ and $2(A^4 -1) B=0$.
\end{lemma}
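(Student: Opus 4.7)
The plan is to expand the product $\tilde R\,\tilde R^{-1}$ and $\tilde R^{-1}\,\tilde R$ to first order in $\hbar$ and reduce the inverse condition to a pair of linear relations among the coefficients of $\mathbb 1$ and $U := \cap\cup$ in the bracket skein algebra. First, since
$$\tilde R\,\tilde R^{-1} = (R+\hbar\phi)(R^{-1}+\hbar\hat\phi) = \mathbb 1 + \hbar\bigl(R\hat\phi + \phi R^{-1}\bigr) \pmod{\hbar^2},$$
the operators are mutual inverses modulo $\hbar^2$ precisely when $R\hat\phi + \phi R^{-1} = 0$, and symmetrically $\hat\phi R + R^{-1}\phi = 0$ from the other product. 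I will show the two conditions coincide, so only one need be analyzed.

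Next I would record the bracket skein identities as operator relations on $V\otimes V$. Namely, the Kauffman bracket gives $R = A\,\mathbb 1 + A^{-1}U$ and $R^{-1} = A^{-1}\mathbb 1 + A\,U$, together with $U^2 = d\,U$ where $d = -A^2 - A^{-2}$ arises from the loop value $\cup\cap$. These immediately yield
$$RU = UR = -A^{-3}\,U, \qquad R^{-1}U = UR^{-1} = -A^{3}\,U,$$
which are the standard type-I framing computations and which I would simply cite from the bracket formalism of \cite{K&P}.

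Now, under the assumption $B=C$ and $\bar B = \bar C$ coming from Lemma~\ref{lem:B=C}, we have $\phi = B\,\mathbb 1 + \bar B\,U$ and $\hat\phi = \bar B\,\mathbb 1 + B\,U$. Substituting and using the relations above,
\begin{align*}
R\hat\phi &= \bar B A\,\mathbb 1 + \bigl(\bar B A^{-1} - A^{-3}B\bigr)U,\\
\phi R^{-1} &= A^{-1}B\,\mathbb 1 + \bigl(AB - A^{3}\bar B\bigr)U.
\end{align*}
Summing and setting the result to zero, the vanishing of the coefficient of $\mathbb 1$ reads $A\bar B + A^{-1}B = 0$, i.e.\ $\bar B = -A^{-2}B$. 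Substituting this into the vanishing of the coefficient of $U$ yields
$$-A^{-3}B - A^{-3}B + AB + AB = 2A^{-3}B(A^4 - 1) = 0,$$
which (since $A$ is invertible) is equivalent to $2(A^4-1)B = 0$. Conversely, if both conditions hold, the same calculation shows $R\hat\phi + \phi R^{-1} = 0$, and an entirely parallel computation with the roles of $R$ and $R^{-1}$ swapped gives $\hat\phi R + R^{-1}\phi = 0$, establishing both inverse identities modulo $\hbar^2$.

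The computation is largely mechanical; the only point that requires care is to confirm that the second product $\tilde R^{-1}\tilde R$ does not impose additional constraints. This follows because swapping $R \leftrightarrow R^{-1}$ and $A \leftrightarrow A^{-1}$ interchanges $\phi$ and $\hat\phi$ (under the given symmetry $B=C,\ \bar B = \bar C$), so the derived relations are invariant under this substitution. Thus the stated pair of conditions is both necessary and sufficient.
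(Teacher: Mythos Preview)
Your proof is correct and follows essentially the same approach as the paper: both reduce the degree-$1$ inverse condition $R\hat\phi + \phi R^{-1} = 0$ to the vanishing of the coefficients of $\mathbb 1$ and $U=\cap\cup$ in the Temperley--Lieb algebra, obtaining the same two equations. The only difference is that the paper carries out the expansion diagrammatically via the skein relation (Figures~\ref{inverse0} and~\ref{inverse1}), whereas you use the operator identities $R = A\,\mathbb 1 + A^{-1}U$, $RU = -A^{-3}U$ directly; your symmetry argument for the other product $\tilde R^{-1}\tilde R$ is also sound (and indeed gives literally the same expression).
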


\begin{proof}
One of the conditions for the two operators to be inverses to each other is computed under the skein relation in Figure~\ref{inverse0}. 
One of the $\hbar$-degree one part of the inverse conditions is depicted in Figure~\ref{inverse1}
in the left-hand side. 
In the first line in the right-hand side, the defining relations of $\phi$ and $\hat \phi$ are  are applied.
In the second and the third lines, the bracket skein relation is applied. 
By collecting terms for each of $\mathbb 1$ and $\cap \cup$, we obtain
\begin{eqnarray*}
A^{-1} B + A \bar C &=& 0 , \\
AB+A^{-1} \bar C - A^3 \bar B - A^{-3} C &=& 0 .
\end{eqnarray*}
By substituting $B=C$ and $\bar B = \bar C$ we obtain the stated result.
\end{proof}

\begin{figure}[htb]
\begin{center}
\includegraphics[width=3.2in]{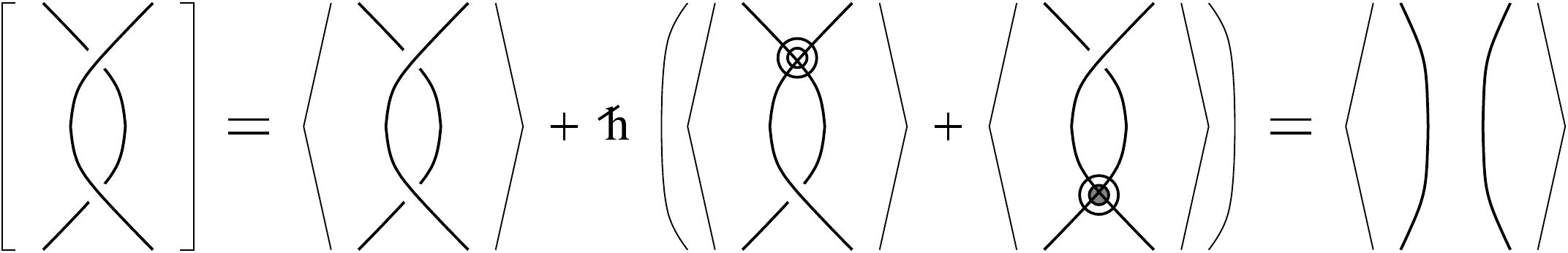}
\end{center}
\caption{}
\label{inverse0}
\end{figure}

\begin{figure}[htb]
\begin{center}
\includegraphics[width=3in]{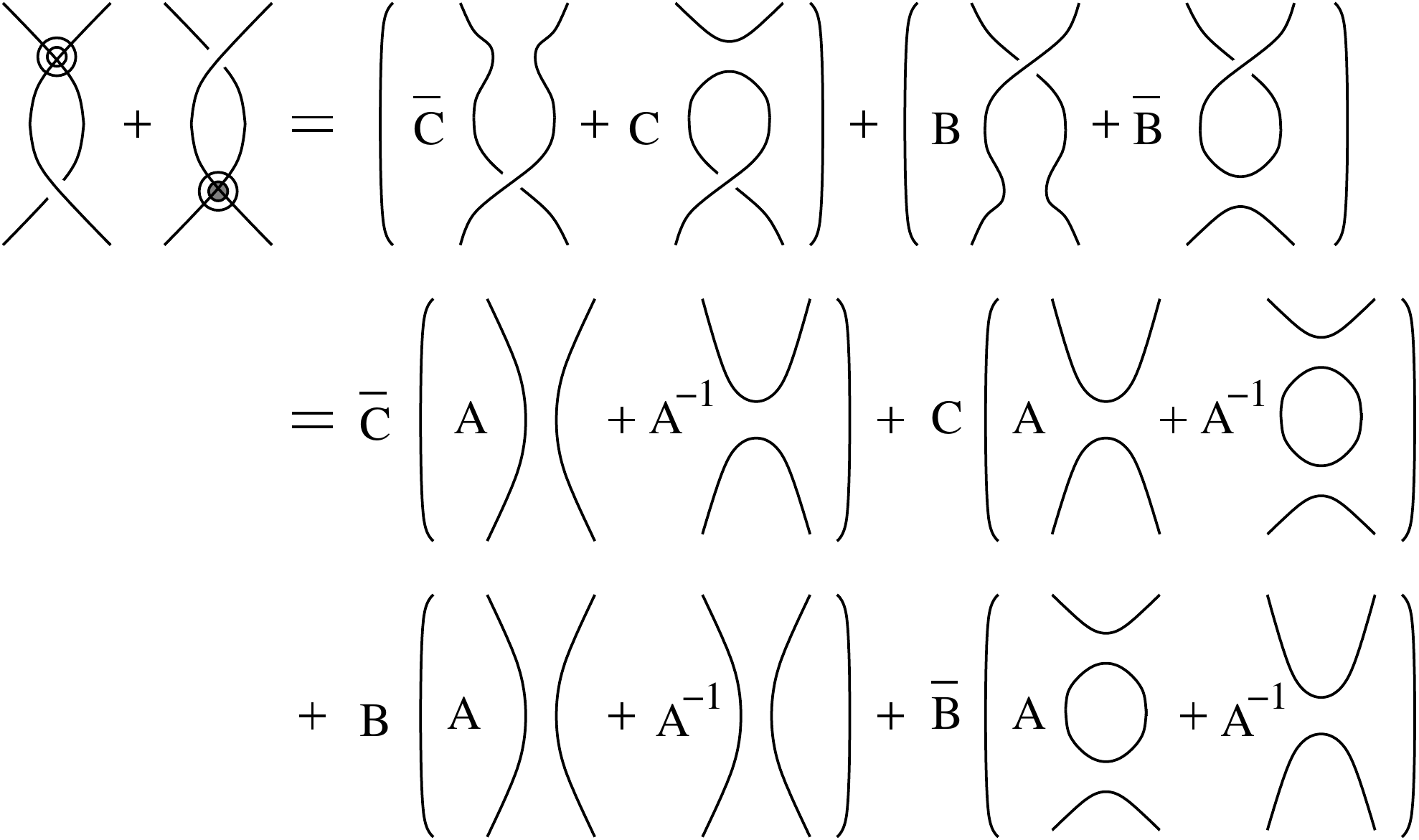}
\end{center}
\caption{}
\label{inverse1}
\end{figure}

\begin{lemma} \label{lem:YB2cocy}
The operators $\tilde R $ and $\tilde R^{-1}$ are YBOs. Equivalently, $\phi$ and $\hat \phi$ are YB $2$-cocycles. 
\end{lemma}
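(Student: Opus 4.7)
The plan is to invoke Lemma~\ref{lem:YB2toR}, which provides the equivalence: $\tilde R = R + \hbar \phi$ is a YBO if and only if $\phi \in Z^2_{\rm YB}(V,V)$, and analogously $\tilde R^{-1} = R^{-1} + \hbar \hat\phi$ is a YBO if and only if $\hat\phi$ is a YB $2$-cocycle with respect to $R^{-1}$. Thus the two assertions of the lemma are equivalent statements of the same content, and it suffices to verify the Yang--Baxter equation for $\tilde R$. The corresponding claim for $\tilde R^{-1}$ then follows immediately, since by Lemma~\ref{lem:inverse} it is the inverse of the YBO $\tilde R$, and the inverse of any YBO is again a YBO (invert both sides of the YBE).

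The key observation is that $\tilde R$ lies in the same two-dimensional Temperley--Lieb subalgebra of $\mathrm{End}(V^{\otimes 2})$, spanned by $\mathbb{1}$ and $U := \cap\cup$, to which the Kauffman bracket $R$-matrix belongs. Writing
\[
\tilde R \;=\; a\,\mathbb{1} + b\,U, \qquad a := A + \hbar B, \quad b := A^{-1} + \hbar \bar B,
\]
and setting $U_1 := U\otimes\mathbb{1}$, $U_2 := \mathbb{1}\otimes U$, the Temperley--Lieb relations $U_i^2 = dU_i$ with loop value $d = \cup\cap = -A^2 - A^{-2}$, together with $U_1 U_2 U_1 = U_1$ and $U_2 U_1 U_2 = U_2$, allow one to expand both sides of the YBE on $V^{\otimes 3}$. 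A routine calculation shows that the coefficients of $\mathbb{1}_3$, $U_1 U_2$, and $U_2 U_1$ agree on both sides, and the remaining difference is $b(a^2 + abd + b^2)(U_1 - U_2)$. Since $b$ is a unit (its $\hbar^0$ part is $A^{-1}$) and $U_1 - U_2 \neq 0$, the YBE for $\tilde R$ reduces, modulo $\hbar^2$, to the single scalar identity $a^2 + abd + b^2 \equiv 0 \pmod{\hbar^2}$.

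The $\hbar^0$ term of this expression is $A^2 + d + A^{-2} = 0$ by the definition of $d$, recovering the classical fact that the undeformed bracket matrix satisfies the YBE. Extracting the $\hbar^1$ coefficient and simplifying yields a scalar proportional to $A^{-3}(A^4 - 1)\bigl[B - A^2 \bar B\bigr]$. Substituting the first constraint $\bar B = -A^{-2} B$ from Lemma~\ref{lem:inverse} gives $B - A^2\bar B = 2B$, and the second constraint $2(A^4 - 1)B = 0$ from the same lemma then forces this coefficient to vanish. This completes the verification of the YBE for $\tilde R$, and Lemma~\ref{lem:YB2toR} identifies $\phi$ and (either by the same argument applied to $R^{-1}$, or via the inverse already established) $\hat\phi$ as YB $2$-cocycles.

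The main technical step is the explicit computation of the $\hbar^1$ coefficient in the scalar identity, where both constraints of Lemma~\ref{lem:inverse} must be deployed simultaneously. This reflects the fact that inverse-consistency and the YBE are not independent conditions within this two-parameter deformation family: given $\bar B = -A^{-2}B$, the same scalar relation $2(A^4-1)B = 0$ is forced by either requirement, so the lemmas preceding this statement have in effect already performed the bulk of the bookkeeping needed.
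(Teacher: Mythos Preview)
Your proof is correct and follows essentially the same route as the paper: both arguments work inside the Temperley--Lieb algebra, reduce the YBE/2-cocycle condition to scalar constraints on the coefficients of the basis diagrams, and find that the only nontrivial constraint is $2(A^4-1)B=0$. The paper presents this as a diagrammatic term-by-term expansion of the six summands of $\delta^2_{\rm YB}(\phi)$ followed by collecting coefficients in the five-element TL$_3$ basis, whereas you package the same computation via the closed-form identity $R_1R_2R_1 - R_2R_1R_2 = b(a^2+abd+b^2)(U_1-U_2)$ for any $R=a\,\mathbb{1}+b\,U$; this is a cleaner algebraic route to the same endpoint and makes transparent why exactly one scalar relation appears.
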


\begin{proof}
The left-hand side of  Figure~\ref{YBcocy} is the first term of the YB 2-cocycle condition in Figure~\ref{YBd2}.
In the top line of the right-hand side of Figure~\ref{YBcocy} shows the expansion of crossings according to the bracket skein relation. In the second line, the skein relation for the YB 2-cocycle $\phi$ is applied.
This computation is carried out for all the eight terms of the 2-cocycle condition in Figure~\ref{YBd2}, 
and set equal to zero. The expressions is written in like terms with respect to the basis elements
of the Temperley-Lieb algebra (e.g. \cite{K&P})  as depicted in Figure~\ref{basis}, and the coefficient of each basis element is set equal to zero. 
The coefficients of the elements (A), (D) and (E) cancel.
Those for (B) and (C) each gives the identical condition $2(A^4-1)B = 0$, which is among the hypotheses for the choice of $A$ and $B$.
The case for $\tilde R^{-1}$ is the same due to the symmetry in the definitions.
\end{proof}

\begin{figure}[htb]
\begin{center}
\includegraphics[width=6.2in]{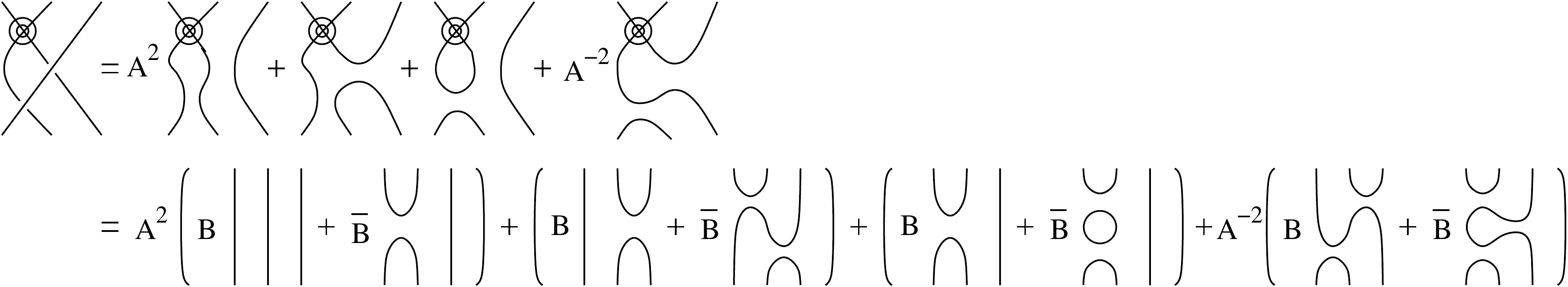}
\end{center}
\caption{}
\label{YBcocy}
\end{figure}

\begin{figure}[htb]
\begin{center}
\includegraphics[width=2.5in]{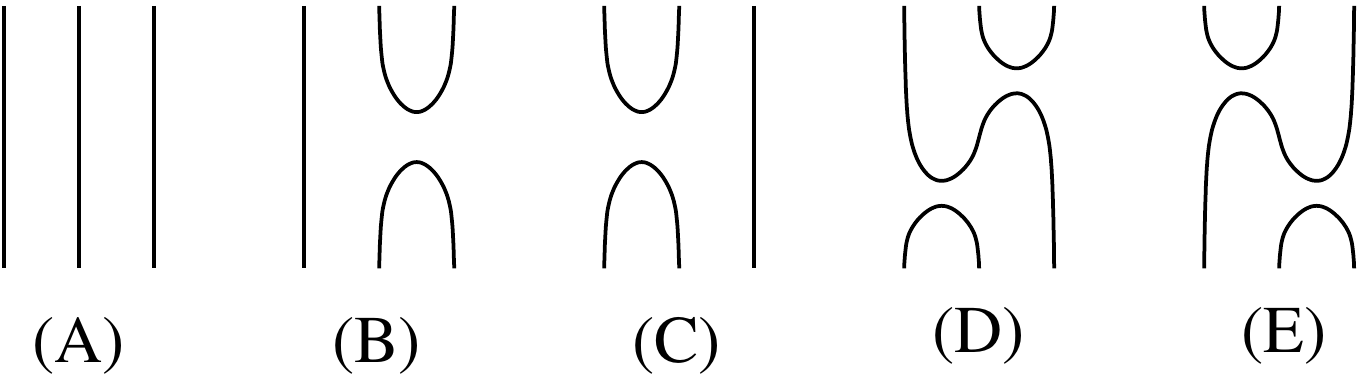}
\end{center}
\caption{}
\label{basis}
\end{figure}

\begin{proposition}\label{prop:allrelations}
 Let $\mathbb k$ be a unital ring that contains an invertible $A$ and $B,\bar B, C, \bar C$. 
The YBOs $\tilde R $ and $\tilde R^{-1}$, together with 
undeformed $\tilde \cup = \cup$ and $\tilde \cap=\cap $ 
define a quantum cocycle knot invariant 
 up to regular isotopy if the following hold:
$$B=C,  \quad \bar B = \bar C, 
\quad \bar B = - A^{-2} B, \quad 2(A^4 -1) B=0. $$ 
\end{proposition}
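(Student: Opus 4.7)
The plan is to apply Theorem~\ref{thm:cupcap} and deduce that all of its hypotheses hold from the three preceding lemmas combined with the stated relations among $A, B, \bar B, C, \bar C$. Concretely, I would organize the proof by walking through each equation in Theorem~\ref{thm:cupcap} in turn and pointing out which piece of structure supplies it.

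First I would exploit the simplification afforded by taking $\tilde\cup = \cup$ and $\tilde\cap = \cap$, so that $\cup_1 = 0$ and $\cap_1 = 0$. With this observation the switchback Equations~\eqref{switch1} and~\eqref{switch2} collapse to the undeformed switchback identities for the Kauffman bracket pairing and copairing, which are known to hold by the discussion preceding the Kauffman bracket skein relation; in particular no condition on $B, \bar B, C, \bar C$ is imposed by the switchback axioms. Next, the passcup Equations~\eqref{passcup1}--\eqref{passcup2} and passcap Equations~\eqref{passcap1}--\eqref{passcap2} also split into a degree-zero piece, which holds because $R$ and $\cup,\cap$ satisfy the original passcup/passcap identities from the Kauffman bracket construction, and a degree-one piece, which is precisely the content of Lemma~\ref{lem:B=C} and therefore requires $B=C$ and $\bar B = \bar C$.

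It remains to verify that $\tilde R$ is actually a Yang--Baxter operator (that is, an invertible solution of the YBE on $\tilde V$) with inverse given by the $\tilde R^{-1}$ used throughout. Invertibility and the explicit form of $\tilde R^{-1} = R^{-1} + \hbar \hat\phi$ are supplied by Lemma~\ref{lem:inverse}, which introduces the constraints $\bar B = -A^{-2}B$ and $2(A^4 - 1)B = 0$. That $\tilde R$ and $\tilde R^{-1}$ satisfy the YBE, equivalently that $\phi$ and $\hat\phi$ are YB $2$-cocycles, is the content of Lemma~\ref{lem:YB2cocy}; note that the coefficient condition arising there coincides with the $2(A^4-1)B=0$ already imposed, so no new constraints appear.

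Collecting these ingredients, the hypotheses of Theorem~\ref{thm:cupcap} are all satisfied precisely when the four listed relations among $A, B, \bar B, C, \bar C$ hold, and the theorem then produces the desired regular isotopy invariant $\Phi^{\rm M}_{(\tilde R, \tilde\cup, \tilde\cap)}$. I do not anticipate a substantive obstacle: the work has been done in Lemmas~\ref{lem:B=C}--\ref{lem:YB2cocy}, and the role of the proposition is essentially to assemble those computations and match them against Theorem~\ref{thm:cupcap}. The only point requiring a moment of care is confirming that leaving $\cup$ and $\cap$ undeformed is compatible with the other constraints, which is immediate since $\cup_1 = \cap_1 = 0$ trivializes the degree-one contribution of the pairings in each equation of Theorem~\ref{thm:cupcap}.
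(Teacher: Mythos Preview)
Your proposal is correct and follows essentially the same route as the paper: the paper's proof is the single line ``This follows from Lemma~\ref{lem:B=C}, Lemma~\ref{lem:inverse} and Lemma~\ref{lem:YB2cocy},'' and your write-up simply unpacks how those three lemmas feed into the hypotheses of Theorem~\ref{thm:cupcap}, with the additional (correct) observation that the undeformed $\tilde\cup=\cup$, $\tilde\cap=\cap$ trivialize the switchback constraints.
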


\begin{proof}
This follows from  Lemma~\ref{lem:B=C}, Lemma~\ref{lem:inverse} and Lemma~\ref{lem:YB2cocy}.  
\end{proof}

\begin{figure}[htb]
\begin{center}
\includegraphics[width=3in]{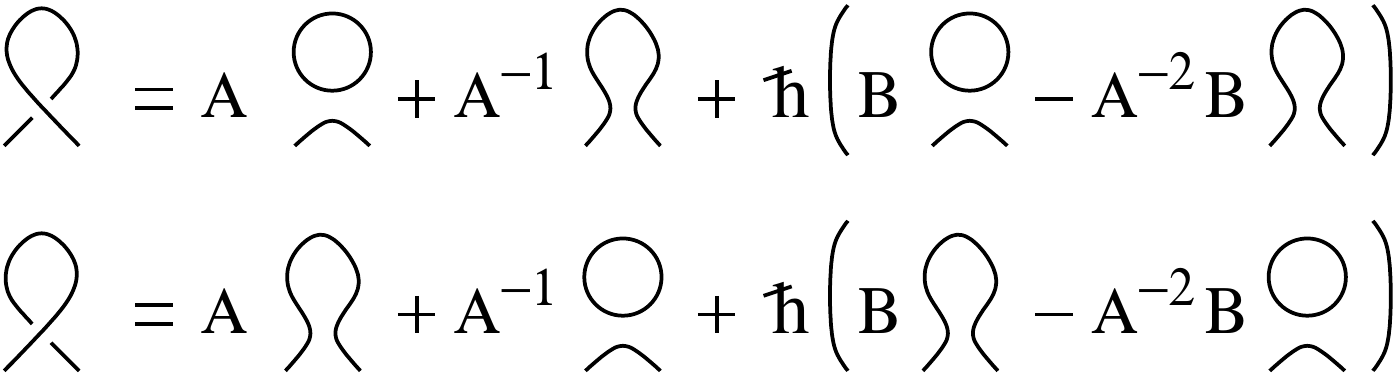}
\end{center}
\caption{}
\label{writhe}
\end{figure}

		\begin{definition}
					{\rm 
					We normalize the regular isotopy invariant defined in Proposition~\ref{prop:allrelations} by dividing it by the value $\delta=- A^2 - A^{-2} $  of the unknot 
					$\begin{tikzpicture}
									\draw (0,0) circle (0.1cm);
							\end{tikzpicture}$, and denote 
					the normalized regular isotopy invariant  by 
					$\Phi^{\rm M}_{(\tilde R, \tilde \cup, \tilde \cap)}(K)$.
										Therefore, we have that $\Phi^{\rm M}_{(\tilde R, \tilde \cup, \tilde \cap)}(\begin{tikzpicture}
							\draw (0,0) circle (0.1cm);
							\end{tikzpicture}) = 1$. 					}
		\end{definition}

The supersctript M stands for maxima/minima.
We note that since $\tilde \cup$ and $\tilde \cap$ are undeformed, the value of  $\begin{tikzpicture}
							\draw (0,0) circle (0.1cm);
							\end{tikzpicture}$ does not contain any terms in $\hbar$, and therefore no new restrictions on $A$ and $B$ are added,   
							as the invertibility of the normalizing factor does not depend on the deformation coefficients.

We examine the effect of writhe to normalize the regular isotopy invariant to obtain an isotopy invariant for oriented knot diagrams, as in the case of the bracket polynomial.
Let $D$ be an oriented diagram.

It has been assumed in this section that knots are unoriented.
To make a normalization by writhe, from here on we pick and fix an orientation. In Figure \ref{writhe} left-hand side, two types of small kinks are depicted. For the top figure, either choice of orientation gives a positive crossing for oriented diagrams, and  a negative crossing for the bottom figure.

\begin{lemma}
Let $D$ be a diagram of an oriented knot obtained from $D'$ by type I Reidemeister move of removing a small
kink 
 as in Figure~\ref{writhe} top (resp. bottom) left.
Set 
\begin{eqnarray*}
w_+:= - A^{3} -  \hbar B A^{-2} (A^{4} + 2 ) , 
  & \ {\rm and} \  & 
w_- := - A^{-3} + \hbar B (A^{-4} + 2 )  .
\end{eqnarray*}
Then we have, respectively, 
$\Phi^{\rm M}_{(\tilde R, \tilde \cup, \tilde \cap)} (D') =  w_+ \Psi^{\rm M}_{(\tilde R, \tilde \cup, \tilde \cap)} (D) $ and  
$\Phi^{\rm M}_{(\tilde R, \tilde \cup, \tilde \cap)}(D') =  w_- \Psi^{\rm M}_{(\tilde R, \tilde \cup, \tilde \cap)} (D). $
\end{lemma}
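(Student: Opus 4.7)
The plan is to reduce the statement to a local calculation at the single crossing of the kink. Since in the hypothesis of this section $\tilde\cup = \cup$ and $\tilde\cap = \cap$ are undeformed, all $\hbar$-dependence of the kink operator is concentrated at the crossing, and the remainder of $D'$ is identical to $D$ outside the kink. Consequently, if I show that the local operator $C_\pm : V \to V$ assigned to a small kink equals $w_\pm\,\mathbb 1_V$, then by locality of the evaluation procedure of Section~\ref{sec:cupcap} the scalar assigned to $D'$ equals $w_\pm$ times the scalar assigned to $D$, and the stated identity for $\Phi^{\rm M}$ follows.

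For the positive kink I would compute
$$C_+ \;=\; (\mathbb 1\otimes\cup)(\tilde R\otimes\mathbb 1)(\mathbb 1\otimes\cap).$$
Expanding $\tilde R = R + \hbar\phi$ with $R = A\,\mathbb 1 + A^{-1}\cap\cup$ (bracket skein) and $\phi = B\,\mathbb 1 + \bar B\,\cap\cup$, each of the two resolutions of the crossing either produces a detached loop, worth $\delta = -A^2 - A^{-2}$, or a $\cap\cup$-insertion that collapses to $\mathbb 1_V$ by the switchback identity. At order $\hbar^0$ these combine to the classical writhe factor $-A^3\,\mathbb 1_V$; at order $\hbar^1$ they combine to $(B\delta + \bar B)\,\mathbb 1_V$, which after substituting $\bar B = -A^{-2}B$ simplifies to $-B A^{-2}(A^4 + 2)\,\mathbb 1_V$. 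This matches $w_+\mathbb 1_V$.

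The negative kink is treated symmetrically using $\tilde R^{-1} = R^{-1} + \hbar\hat\phi$ with $R^{-1} = A^{-1}\,\mathbb 1 + A\,\cap\cup$ and $\hat\phi = \bar C\,\mathbb 1 + C\,\cap\cup$. The same two scalar reductions yield $-A^{-3}\,\mathbb 1_V$ at order $\hbar^0$ and $(\bar C\delta + C)\,\mathbb 1_V$ at order $\hbar^1$. Invoking Lemmas~\ref{lem:B=C} and~\ref{lem:inverse} to substitute $C = B$ and $\bar C = \bar B = -A^{-2}B$, the order-$\hbar^1$ coefficient becomes $B(A^{-4} + 2)$, matching $w_-$.

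I do not expect any real obstacle; the argument rests on the two reductions $(\mathbb 1\otimes\cup)(\mathbb 1\otimes\cap) = \delta\,\mathbb 1_V$ and $(\mathbb 1\otimes\cup)(\cap\cup\otimes\mathbb 1)(\mathbb 1\otimes\cap) = \mathbb 1_V$, the first immediate and the second a direct consequence of the switchback property. The only care required is in bookkeeping like-terms in $\hbar$ and in not confusing the roles of $B$, $\bar B$, $C$, $\bar C$ in the cocycle skein expansions of Figure~\ref{cocyskein1}.
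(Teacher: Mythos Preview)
Your proposal is correct and follows essentially the same approach as the paper: the paper's proof consists solely of the diagrammatic expansion in Figure~\ref{writhe}, which is exactly your local computation of $C_\pm$ via the bracket and cocycle skein relations followed by the switchback simplification. Your algebraic bookkeeping (the reductions $(\mathbb 1\otimes\cup)(\mathbb 1\otimes\cap)=\delta\,\mathbb 1_V$ and $(\mathbb 1\otimes\cup)(\cap\cup\otimes\mathbb 1)(\mathbb 1\otimes\cap)=\mathbb 1_V$, and the substitutions $\bar B=-A^{-2}B$, $C=B$, $\bar C=\bar B$) reproduces the diagrammatic calculation faithfully.
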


\begin{proof}
Diagrammatic computations are shown in Figure~\ref{writhe} top and bottom right, respectively.
\end{proof}

We note that the above two coefficients should yield $1$ when multiplied, and indeed we find
\begin{eqnarray*}
w_+ \cdot w_- &=& [ - A^{3} -  \hbar B A^{-2} (A^{4} + 2 ) ]    [ - A^{-3} + \hbar B (A^{-4} + 2 ) ]  \\
&=& 
1 -  \hbar B ( A^3 ( (A^{-4} + 2 )  - A^{-3} A^{-2} (A^{4} + 2 ) ) \\
&=& 
1 - \hbar B [ 2 A^{-5}  ( A^8 - 1 ) ] , 
\end{eqnarray*}
modulo $\hbar^2$, and $2   ( A^8 - 1 ) B $  is divisible by $2(A^4 -1)B$, one of the equalities in Lemma~\ref{prop:allrelations},
 as expected. Hence we treat that $w_+$ and $w_-$ are inverses each other in $\mathbb k$.

By the writhe normalization argument, we obtain the following.

\begin{theorem}\label{thm:bracket}
Let $D$ be an oriented knot diagram with the number of positive (resp. negative)  crossings $p(D)$
(resp. $n(D)$). 
Then 
$$ \Phi^{\rm W}_{(\tilde R, \tilde \cup, \tilde \cap)}  (D) = w_+^{-p(D)} w_-^{-n(D)} \Phi^{\rm M}_{(\tilde R, \tilde \cup, \tilde \cap)} (D) $$ 
is independent of choice of $D$ and indeed is an invariant up to isotopy, thus denoted by 
$ \Phi^{\rm W}_{(\tilde R, \tilde \cup, \tilde \cap)}  (K)  $ and simply  $\Phi^{\rm W} (K)$. 
\end{theorem}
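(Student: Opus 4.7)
The plan is to reduce the claim to invariance under the three Reidemeister moves one at a time, leveraging the regular isotopy invariance already established. First I would observe that Proposition~\ref{prop:allrelations} guarantees $\Phi^{\mathrm{M}}_{(\tilde R, \tilde \cup, \tilde \cap)}$ is invariant under Reidemeister moves II and III, and that the crossing counts $p(D)$ and $n(D)$ are unchanged by these moves; hence the normalization factor $w_+^{-p(D)}w_-^{-n(D)}$ is also unchanged, so $\Phi^{\mathrm{W}}$ inherits invariance under R-II and R-III automatically.

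The substantive step is invariance under Reidemeister I. I would invoke the preceding lemma: adding a positive kink to $D$ to form $D'$ multiplies $\Phi^{\mathrm{M}}$ by $w_+$ while increasing $p$ by one and leaving $n$ fixed, and symmetrically a negative kink multiplies by $w_-$ while increasing $n$ by one. A single substitution then shows that the scalar introduced by the kink is cancelled exactly by the change in the normalization exponent, in precise parallel to the standard writhe normalization of the Kauffman bracket used to pass from the bracket polynomial to the Jones polynomial. The different diagrammatic variants of kinks of the same sign (e.g.\ mirror-reflected curls) reduce to one another via Reidemeister II moves already handled, so checking one representative per sign suffices.

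The main obstacle is a prerequisite rather than a step within the argument: one must verify that $w_+$ and $w_-$ are units in $\mathbb k[[\hbar]]/(\hbar^2)$, so that the exponents $-p(D)$ and $-n(D)$ are well-defined. This was already established in the computation immediately preceding the theorem, where $w_+\cdot w_-$ was shown to equal $1$ modulo $\hbar^2$ upon using the hypothesis $2(A^4-1)B=0$ of Proposition~\ref{prop:allrelations}; from this identity one reads off $w_-$ as the inverse of $w_+$ and vice versa. With unit-ness granted, the theorem itself reduces to a bookkeeping exercise, with all of the substantive algebraic and diagrammatic content already carried out in Proposition~\ref{prop:allrelations} and the preceding lemma on kinks.
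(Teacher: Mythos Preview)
Your outline matches the paper's own approach, which simply invokes ``the writhe normalization argument'' and the preceding lemma on kinks. One small correction: under Reidemeister~II the individual counts $p(D)$ and $n(D)$ are \emph{not} unchanged---each shifts by $1$ (of the same sign)---so the normalization factor picks up $w_+^{\mp 1}w_-^{\mp 1}$; the conclusion survives precisely because $w_+w_-=1$, which you have already recorded. Equivalently, since $w_+$ and $w_-$ are mutually inverse, the prefactor may be rewritten as $w_+^{-(p(D)-n(D))}=w_+^{-w(D)}$, and the writhe $w(D)$ is what is preserved by R-II and R-III.
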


The superscript W stands for writhe-normalized. 
We also use the notation $\Phi^{\rm W} (K)=\Phi^{{\rm W} 0} (K) + \hbar  \Phi^{{\rm W} 1} (K) $.
Since $w_+$ and $w_-$ are inverses of each other, we can also write the prefix powers to be $w_-^{p(D)} w_+^{n(D)}$.

\begin{lemma}\label{lem:Tm}
Let $T_m$ be a $(2,m)$ torus knot or knot  for positive integers $m$,
as depicted in Figure~\ref{Tm}. 
Then we have $ \Phi^{\rm M} (T_1)= w_+$,  $\Phi ^{\rm W}(T_1)=1$ and 
\begin{eqnarray}
 \Phi^{\rm M} (T_{m+1} ) &=&   (A + \hbar B ) \Phi^{\rm M} (T_m ) + (A^{-1} - \hbar A^2 B)  w_+^{m} , \label{eq:Tmbracket} \\
 \Phi^{\rm W} (T_{m+1}) &=&   (A + \hbar B )  w_-  \Phi^{\rm W}(T_m) +  (A^{-1} - \hbar A^2 B) w_- . \label{eq:TmJones}
\end{eqnarray}
\end{lemma}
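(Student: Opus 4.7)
The proof is by induction on $m$, using the cocycle-deformed Kauffman skein of Section~\ref{sec:bracket} applied to a single crossing of the standard $(2,m+1)$-torus diagram, together with a direct computation for the base case; throughout, the relations $\bar B = -A^{-2}B$ and $2(A^4-1)B = 0$ from Lemmas~\ref{lem:inverse}--\ref{lem:YB2cocy} will be used repeatedly. For the base case, $T_1$ is a single positive kink: expanding $\tilde R = (A + \hbar B)\mathbb 1 + (A^{-1} - \hbar A^{-2} B)\,\cap\cup$ and closing in the kink fashion — where $\mathbb 1$ closes to the loop value $\delta = -A^2 - A^{-2}$ and $\cap\cup$ closes to $1$ — yields $(A + \hbar B)\delta + (A^{-1} - \hbar A^{-2}B) = -A^3 - \hbar B A^{-2}(A^4 + 2) = w_+$, and since $T_1$ has writhe $+1$ this gives $\Phi^{\rm W}(T_1) = w_+^{-1} w_+ = 1$.

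For the inductive step, I would view $T_{m+1}$ as the Markov closure of $\sigma_1^{m+1}$ and apply the skein $\tilde R = a\mathbb 1 + bU$ (with $a = A + \hbar B$, $b = A^{-1} - \hbar A^{-2}B$, $U = \cap\cup$) to one distinguished crossing. The $a\mathbb 1$-smoothing leaves the closure of the remaining $\sigma_1^m$, namely $T_m$, and contributes $(A + \hbar B)\Phi^{\rm M}(T_m)$. The $bU$-smoothing produces the Markov closure of $\sigma_1^m\cdot U$; writing $\tilde R^m = X_m\mathbb 1 + Y_m U$ in the 2-strand Temperley--Lieb algebra and using $U^2 = \delta U$, one has $\tilde R^m\cdot U = (X_m + \delta Y_m)\,U$, which closes to $X_m + \delta Y_m$. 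The recursions $X_{m+1} = aX_m$ and $Y_{m+1} = (a + b\delta)Y_m + bX_m$ telescope cleanly to $X_m + \delta Y_m = (a + b\delta)^m$, and a short expansion using $\bar B = -A^{-2}B$ identifies $a + b\delta$ with $w_-$, so this contribution equals $b\,w_-^m$. The coefficient $b = A^{-1} - \hbar A^{-2}B$ may be rewritten as $A^{-1} - \hbar A^2 B$ using $A^{2}B = A^{-2}B$ (a consequence of $2(A^4-1)B = 0$), and together with $w_+ w_- = 1$ modulo the same relation this yields the form in \eqref{eq:Tmbracket}.

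The writhe-normalized recursion \eqref{eq:TmJones} follows by multiplying \eqref{eq:Tmbracket} by the writhe factor $w_+^{-(m+1)} = w_-^{m+1}$ (since $T_{m+1}$ has $m+1$ positive crossings) and using the inductive definition $\Phi^{\rm W}(T_m) = w_-^m\,\Phi^{\rm M}(T_m)$: the first summand becomes $(A + \hbar B)\,w_-\,\Phi^{\rm W}(T_m)$, while the second simplifies using $w_+ w_- = 1$ to $(A^{-1} - \hbar A^2 B)\,w_-$. The main technical obstacle is the bookkeeping inside the quotient ring enforced by $2(A^4 - 1)B = 0$: the identifications $a + b\delta = w_-$, $w_+ w_- = 1$, and $A^{\pm 2}B = A^{\mp 2}B$ must each be verified consistently at order $\hbar$, and the telescoping identity $X_m + \delta Y_m = (a + b\delta)^m$ together with the collapse of the powers of $w_\pm$ in the second summand must be carried through the ideal so that the final answer appears in the precise normal form stated. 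Once this careful bookkeeping is set up, the underlying Temperley--Lieb calculation is routine.
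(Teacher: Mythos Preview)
Your overall strategy---resolving one crossing of $T_{m+1}$ via the deformed skein and tracking both smoothings in the $2$-strand Temperley--Lieb algebra---is exactly the computation the paper's one-line proof is pointing to, and the TL manipulation is carried out correctly up through the identification $X_m + \delta Y_m = (a + b\delta)^m = w_-^m$.

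The gap is in your final reconciliation. You obtain the second summand as $b\,w_-^m$ and then invoke $w_+ w_- = 1$ to reach the stated form $(A^{-1} - \hbar A^2 B)\,w_+^m$. But $w_+ w_- = 1$ yields $w_-^m = w_+^{-m}$, \emph{not} $w_+^m$; these differ already at order~$\hbar^0$, since $(-A^{-3})^m \ne (-A^{3})^m$ unless $A^{6m}=1$. (Separately, the rewriting $A^{2}B = A^{-2}B$ requires $(A^4-1)B = 0$, which is strictly stronger than the hypothesis $2(A^4-1)B = 0$.) A sanity check at $m=1$ makes the issue concrete: the recursion \eqref{eq:Tmbracket} as printed, together with $\Phi^{\rm M}(T_1)=w_+$, gives $\Phi^{\rm M}(T_2)$ with $\hbar^0$-part $(A+A^{-1})(-A^3)=-A^4 - A^2$, whereas the direct bracket evaluation of the Hopf link is $-A^4 - A^{-4}$; it is your intermediate formula $a\,\Phi^{\rm M}(T_m)+b\,w_-^m$ that reproduces the correct value. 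So either the diagram in Figure~\ref{Tm} is arranged so that the $U$-smoothing of one crossing genuinely produces an unknot with $m$ \emph{positive} kinks---in which case your model of $T_m$ as the Markov closure of $\sigma_1^m$ is the wrong starting point and the TL calculation must be redone for that diagram---or the printed exponent $w_+^m$ is a misprint. Either way, the inference ``$w_+ w_- = 1 \Rightarrow w_-^m = w_+^m$'' is invalid, and your argument cannot close \eqref{eq:Tmbracket} as written; the derivation of \eqref{eq:TmJones} then inherits the same problem.
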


\begin{proof}
Equation~\eqref{eq:Tmbracket} follows from the definitions and the deformed bracket skein.
Since 
$$ \Phi^{\rm W} (T_m)=w_+^{-m}\Phi^{\rm M}(T_m)=w_-^m \Phi^{\rm M}(T_m), $$
 by multiplying both sides by $w_-^{m+1}$, we obtain Equation~\eqref{eq:TmJones}.
\end{proof}

\begin{example}\label{ex:Tmbracket}
{\rm
Let $T_m$ be a $(2,m)$ torus knot or knot as depicted in Figure~\ref{Tm}. 
If $m-1$ appears we retain  it to be positive. Note that all crossings are positive.
By using Equation~\eqref{eq:TmJones}, we obtain 
\begin{eqnarray*}
  \Phi^{\rm W}(T_2) &=&[  (A  + A^{-1}) + \hbar(B + A^2 B) ]  w_- \\
  &=& [ ( A  + A^{-1}) + \hbar (B - A^2B) ] [ - A^{-3} + \hbar B (A^{-4} + 2 ) ]  \\
  &=& -A^{-3}(A  + A^{-1}) + \hbar  B(A^{-5} + 3A^{-1} + 2A). 
\end{eqnarray*}

By Proposition~\ref{prop:allrelations}, the coefficient of $\hbar$ is considered modulo 
$2(A^4-1)$. This is satisfied with $A=\sqrt{-1}$.
In the coefficient of $\hbar B$, 
$$A^{-5} + 3A^{-1} + 2A = - 2 \sqrt{-1}. $$
Thus $\Phi^{{\rm W} 1} (T_2)$ is nontrivial.

}
\end{example}

	\begin{proposition} 
		For $A=\sqrt{-1}$,  we have $\Phi^{{\rm W}} (T_m) = (2-m) +  \sqrt{-1} (m-1)  (8 - m)  \hbar  B$
		\begin{eqnarray*}
				\Phi^{{\rm W}} (T_m) &=& 
				\begin{cases}
							0 - m\sqrt{-1}B\hbar \ \ \ \ \hspace{10mm} {\rm for}\ m\geq 0\  {\rm even}\\
							\frac{1}{2} - (m-2)\sqrt{-1}B \hbar\ \  \ \ {\rm for}\  m\geq 0\ {\rm odd}
				\end{cases}.
		\end{eqnarray*} 
	\end{proposition}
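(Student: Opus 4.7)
The plan is to reduce the problem to an affine first-order recursion in $m$, solve it explicitly modulo $\hbar^2$, and verify the stated parity-split formula by induction. The recursion \eqref{eq:TmJones} from Lemma~\ref{lem:Tm} already expresses $\Phi^{\rm W}(T_{m+1})$ as a degree-one affine function of $\Phi^{\rm W}(T_m)$ with coefficients built from $w_-$, $A$, and $B$, so once those coefficients are specialized at $A = \sqrt{-1}$ the induction should be routine.

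First I would specialize $w_+$ and $w_-$ at $A = \sqrt{-1}$. Using $A^2 = -1$, $A^3 = -\sqrt{-1}$, $A^{-2} = -1$, and $A^{-3} = \sqrt{-1}$, a short calculation yields $w_+ = \sqrt{-1} + 3\hbar B$ and $w_- = -\sqrt{-1} + 3\hbar B$; one checks $w_+ w_- \equiv 1 \pmod{\hbar^2}$, consistent with the remark immediately following Theorem~\ref{thm:bracket}. Substituting into \eqref{eq:TmJones}, the multiplicative coefficient $(A+\hbar B)\,w_-$ collapses to $1 + 2\sqrt{-1}\hbar B$ and the additive term $(A^{-1} - \hbar A^2 B)\,w_-$ collapses to $-1 - 4\sqrt{-1}\hbar B$, both modulo $\hbar^2$, so the recursion becomes the clean affine form
\[
\Phi^{\rm W}(T_{m+1}) \;=\; (1 + 2\sqrt{-1}\hbar B)\,\Phi^{\rm W}(T_m) \;-\; 1 \;-\; 4\sqrt{-1}\hbar B.
\]

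Writing $\Phi^{\rm W}(T_m) = a_m + b_m \hbar B$ with $a_m, b_m \in \mathbb k$ and reducing modulo $\hbar^2$, this decouples into two scalar recursions, $a_{m+1} = a_m - 1$ and $b_{m+1} = b_m + 2\sqrt{-1}\,(a_m - 2)$, with base data $a_1 = 1$, $b_1 = 0$ taken from $\Phi^{\rm W}(T_1) = 1$ in Lemma~\ref{lem:Tm}. Both recursions are telescoping sums and yield closed-form polynomials in $m$, which I would then rearrange into the two cases of the stated formula and verify by induction on $m$.

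The main obstacle is bookkeeping rather than any conceptual difficulty: because $A^4 = 1$ forces many terms in $w_\pm$ and in the recursion coefficients to collapse, a stray sign or factor of $\sqrt{-1}$ can propagate through every iteration, and the unified closed form must be carefully repackaged into the odd and even parity pieces written in the statement. To anchor the recursion before running the induction I would cross-check intermediate values against the $m=2$ calculation of Example~\ref{ex:Tmbracket}, which already pins down $\Phi^{\rm W}(T_2)$.
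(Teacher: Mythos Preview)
Your approach is exactly the paper's: specialize the recursion \eqref{eq:TmJones} at $A=\sqrt{-1}$, reduce modulo $\hbar^2$, and run an induction on $m$. There is no conceptual gap.

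There is, however, a concrete computational discrepancy you should be aware of before you try to ``rearrange into the two cases of the stated formula''. Your evaluations $w_-=-\sqrt{-1}+3\hbar B$, $(A+\hbar B)w_-=1+2\sqrt{-1}\hbar B$, and $(A^{-1}-\hbar A^2B)w_-=-1-4\sqrt{-1}\hbar B$ are all correct (they agree with Example~\ref{ex:Tmbracket} at $m=2$ and with a direct bracket computation for $T_3$). The paper, in its proof, substitutes $w_-$ with the opposite sign, obtaining instead the multiplicative factor $-1+4\sqrt{-1}\hbar B$ and additive term $1-2\sqrt{-1}\hbar B$, and then divides by $\delta=2$ at the end. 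That is how the parity-split formula in the displayed cases is reached.

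Your recursion gives $a_{m+1}=a_m-1$, hence $a_m=2-m$, whereas the paper's gives $a_{m+1}=-a_m+1$, hence $a_m\in\{0,1\}$ alternating. These agree at $m=1,2$ (so your planned cross-check against Example~\ref{ex:Tmbracket} will not detect the difference) but diverge from $m=3$ on. Note that the statement of the proposition itself carries \emph{two} formulas, the in-line $(2-m)+\ldots$ and the displayed cases, and they are mutually inconsistent; your recursion reproduces the degree-zero part $2-m$ of the in-line formula, not the cases. So rather than expecting the induction to confirm the cases, you should flag the sign issue and the double normalization, and state which closed form your (correctly derived) recursion actually yields.
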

	
	\begin{proof} 
		From Example~\ref{ex:Tmbracket}, we use the evaluation $A=\sqrt{-1}$.
		In this case the recursive formula of Lemma~\ref{lem:Tm} becomes
		$$ \Phi^{\rm W} (T_{m+1})_{A=\sqrt{-1}}  = ( \sqrt{-1}+ \hbar B )   w_-  \Phi^{\rm W}(T_m)_{A=\sqrt{-1}} +  ( - \sqrt{-1} +   \hbar B) w_- ,$$
		where 
		$w_- := - A^{-3} + \hbar B (A^{-4} + 2 )  = - \sqrt{-1} + 3 \hbar  B  $.
		Hence by simplifying the notation  $\varphi_m:=  \Phi^{\rm W}(T_m)_{A=\sqrt{-1}} $, we obtain 
		\begin{eqnarray*}
			\varphi_{m+1} 
			&=&
			(\sqrt{-1} + \hbar B)(\sqrt{-1}+3\hbar B)\varphi_m + (-\sqrt{-1} + \hbar B)(\sqrt{-1}+3\hbar B)\\
			&=&
			(-1+4\sqrt{-1}B\hbar)\varphi_m + (1-2\sqrt{-1}B\hbar ). 
		\end{eqnarray*}
		
		Set $\varphi_m=a_m + \hbar B b_m$. Then we obtain 
		\begin{eqnarray*}
			a_{m+1} + \hbar B b_{m+1} &=&  (-1+4\sqrt{-1}B\hbar) (a_m + \hbar B b_m) + (1-2\sqrt{-1}B\hbar )\\
			&=& (-a_m +1) + \hbar B (4 \sqrt{-1} a_m - b_m - 2 \sqrt{-1} ),
		\end{eqnarray*}
		so that 
		$$  \begin{cases} a_{m+1} = -a_m +1 , \\
		b_{m+1} = 4 \sqrt{-1} a_m - b_m - 2 \sqrt{-1} , 
		\end{cases}
		$$
		and $a_1=1$, $b_1=0$. 
		It follows that $a_m=0$ for $m$ even, and $a_m = 1$ for $m$ odd, while 
		$b_m = -2m\sqrt{-1}$ for $m$ even, and $b_m = -2(m-2)\sqrt{-1}$ for $m$ odd. The normalizing factor is $\delta = -A^2-A^{-2} = 2$, 
		and dividing $a_m$ and $b_m$ by $2$ we obtain the result. 
	\end{proof}

	\begin{corollary}
		There exist infinitely many knots and links with nontrivial, and distinct, infinitesimal part $\Phi^{{\rm W} 1}$ of the quantum cocycle invariant $\Phi^{{\rm W}}$.
	\end{corollary}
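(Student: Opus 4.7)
The plan is to read off the corollary directly from the preceding Proposition. That Proposition gives an explicit closed form for $\Phi^{\rm W}(T_m)$ evaluated at $A=\sqrt{-1}$, and in particular identifies the infinitesimal part
\begin{eqnarray*}
\Phi^{{\rm W}1}(T_m) = \begin{cases} -m\sqrt{-1}\, B & m\ \text{even},\\ -(m-2)\sqrt{-1}\, B & m\ \text{odd}. \end{cases}
\end{eqnarray*}
So the whole content of the corollary is to pick an infinite subfamily of torus knots/links $T_m$ on which these values are all nonzero and pairwise distinct, and to confirm that the underlying topological objects are genuinely different.

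First, I would fix the coefficient ring. By Proposition~\ref{prop:allrelations}, the admissible data require $2(A^4-1)B=0$, $\bar B=-A^{-2}B$, $B=C$, $\bar B=\bar C$; with $A=\sqrt{-1}$ the constraint $2(A^4-1)B=0$ is satisfied automatically, so $B$ can be taken to be an arbitrary nonzero element. I would work over $\mathbb k=\mathbb Q(\sqrt{-1})[B]$ (or simply $\mathbb Q(\sqrt{-1})$ with $B=1$), which is of characteristic zero, so the elements $k\sqrt{-1}\, B$ for distinct positive integers $k$ are pairwise distinct and nonzero.

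Second, I would take the infinite subfamily $\{T_{2k}\}_{k\geq 1}$, i.e.\ the $(2,2k)$-torus links. By the formula, $\Phi^{{\rm W}1}(T_{2k})=-2k\sqrt{-1}\, B$, which is nonzero for every $k\geq 1$ and takes a different value for each $k$. To close the argument I would note that the $T_{2k}$ are pairwise non-isotopic as links: this is a standard fact about torus links, and it is already detected for instance by the degree of the Jones polynomial or by the number of components combined with the Alexander polynomial. Hence $\{T_{2k}\}_{k\geq 1}$ supplies infinitely many distinct knots/links whose infinitesimal parts $\Phi^{{\rm W}1}$ are nontrivial and pairwise distinct, which is exactly the claim.

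There is really no hard step here; the only thing worth flagging is the choice of coefficient ring. If one insisted on a ring in which $B$ is nilpotent or of small positive characteristic, the values $-2k\sqrt{-1}\,B$ could collapse and the argument would need to be restricted to an arithmetic progression of indices $k$ avoiding the collapse. I would therefore mention this explicitly, observe that Proposition~\ref{prop:allrelations} places no such restriction on $\mathbb k$ beyond containing $\sqrt{-1}$ and an invertible $A$ with $2(A^4-1)B=0$, and take $\mathbb k=\mathbb Q(\sqrt{-1})$, $A=\sqrt{-1}$, $B=1$ as the canonical choice.
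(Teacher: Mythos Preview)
Your proposal is correct and follows exactly the intended approach: the paper states the corollary with no proof, as it is immediate from the preceding Proposition's explicit formula for $\Phi^{\rm W}(T_m)$ at $A=\sqrt{-1}$. Your added care in fixing the coefficient ring (so that the values $-2k\sqrt{-1}\,B$ are genuinely distinct and nonzero) and in noting that the $T_{2k}$ are pairwise non-isotopic makes the argument more complete than what the paper records.
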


\section{Higher order invariants}\label{sec:higher_def}

We consider now higher order deformations
 of YBOs, where the condition of being enhanced is preserved
 for the trace approach.  It is known that YBOs can be nontrivially deformed to higher order, as seen explicitly in \cite{EZ_perturbative}. Our objective in this section is to characterize when higher order deformations define EYBOs, and how to use such 
 deformed  
 EYBOs to define higher order quantum cocycle invariants.
  Although a  similar higher order deformations can be formulated, 
we focus on the trace approach in the remaining sections, as our purpose  of defining this procedure is to give an interpretation of such deformations for the YBOs used for the Jones and Alexander polynomials.

We define the following in a manner similar to the infinitesimal deformation. 
Let 
$$\tilde R_{(n)} = \sum_{i=0}^n \hbar^i\phi_i \in \tilde V_{(n)} = \mathbb k [[\hbar]]/ (\hbar^{n+1} )  \otimes V$$
 be an order $n$ deformation of the EYBO $\phi_0 := R$. 
Then, we say that  $\phi_{n+1}$ is an enhanced order (or degree) $n+1$ deformation if 
$$\tilde R_{(n+1)} := \tilde R_{(n)}  + \hbar^{n+1}\phi_{n+1} =  \sum_{i=0}^{n+1} \hbar^i\phi_i 
\in 
\tilde V_{(n+1)} = \mathbb k [[\hbar]]/ (\hbar^{n+2} ) \otimes V   $$
 is an EYBO.

The following theorem provides conditions for $(\phi_{n+1}, \mu_{n+1})$ 
to extend deformation $\tilde R_{(n)}$ to $\tilde R_{(n+1)}$ as an EYBO. 
 
 We recall the following notation from Definition~\ref{def:gamma}.
 \begin{eqnarray*}
					\hat  \Gamma_n^m = \{(j_1,\ldots,j_m )\ |\  j_1+\cdots +j_m= n, \  j_\ell \geq 0\ {\rm and}\  j_\ell \neq n,\ {\rm for\ all}\ \ell=1,\ldots,m \}.
\end{eqnarray*}
Recall also, when $m$ is apparent, we use $ \hat  \Gamma_n=\hat  \Gamma_n^m$. 

		We also define, following \cite{SZ-YBH}, the quantity
	\begin{eqnarray*}
		\Theta_{n+1} 
		&:=& \sum_{(i,j,k)\in \hat\Gamma_{n+1}} [(\phi_i\otimes \mathbb 1)(\mathbb 1\otimes \phi_j)(\phi_k\otimes \mathbb 1)- (\phi_k\otimes \mathbb 1)(\phi_i\otimes \mathbb 1)(\mathbb 1\otimes \phi_j)]
	\end{eqnarray*}

\begin{theorem}\label{thm:higher_YB_inv}
		Let $R$ be a YBO, and let $\phi_1, \ldots, \phi_n$, $\mu_1, \ldots, \mu_n$, $\alpha, \beta$ be such that $S = (\tilde\phi, \alpha, \beta, \tilde \mu)$ is an EYBO, where $\tilde R_{(n)}= \sum_{i=0}^n \hbar^i\phi_i$ and $\tilde \mu_n = \sum_{i=0}^n \hbar^i\mu_i$. Then the obstruction for $(\phi_{n+1}, \mu_{n+1})$ to be an enhanced order $n+1$ deformation of $R$ is the following system of equations
		\begin{eqnarray}\label{eqn:higher_deform0}
			\delta^2_{\rm YB} \phi_{n+1} + \Theta_{n+1} = 0 
			\end{eqnarray}
			and
		\begin{eqnarray}\label{eqn:higher_deform}
				\begin{cases}
											\sum_{(i,j,k)\in \hat \Gamma_{n+1}} [(\mu_i\otimes \mu_j)\phi_k - \phi_i(\mu_j\otimes \mu_k)] = 0\\
						\sum_{(i,j,k)\in \hat  \Gamma_{n+1}} \tr(\phi_i(\mu_j\otimes \mu_k)) = \alpha\beta\mu_{n+1}\\
						\sum_{(i,j,k)\in  \hat \Gamma_{n+1}} \tr(\hat \phi_i(\mu_j\otimes \mu_k)) = \alpha\beta\mu_{n+1}
				\end{cases}
		\end{eqnarray}
		where $\hat \phi_i$ indicates the $i^{\rm th}$ component of the inverse of $\tilde R_{(n)} + \hbar^{n+1}\phi_{n+1}$ (modulo $\hbar^{n+2}$). 
	\end{theorem}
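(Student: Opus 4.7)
The plan is to expand each of the three defining conditions of an EYBO for the candidate data $\tilde R_{(n+1)} = \tilde R_{(n)} + \hbar^{n+1}\phi_{n+1}$ and $\tilde\mu_{n+1} = \tilde\mu_n + \hbar^{n+1}\mu_{n+1}$ in powers of $\hbar$, and collect the coefficient of $\hbar^{n+1}$. Since by hypothesis $\tilde S_{(n)} = (\tilde R_{(n)},\alpha,\beta,\tilde\mu_n)$ is already an EYBO on $\tilde V_{(n)}$, every coefficient of $\hbar^{k}$ for $k\le n$ automatically vanishes in each of the three equations, so the only new obstruction to promoting the data to an EYBO on $\tilde V_{(n+1)}$ lives in degree $n+1$. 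The goal is to show that the degree $n+1$ part of the three EYBO conditions reads precisely as \eqref{eqn:higher_deform0} and \eqref{eqn:higher_deform}.

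First I would handle the Yang--Baxter equation. Expanding $(\tilde R\otimes \mathbb 1)(\mathbb 1\otimes\tilde R)(\tilde R\otimes\mathbb 1)-(\mathbb 1\otimes\tilde R)(\tilde R\otimes\mathbb 1)(\mathbb 1\otimes\tilde R)$ and reading off the coefficient of $\hbar^{n+1}$ gives a sum over $(i,j,k)\in\Gamma^{3}_{n+1}$. I would partition this index set into the three extremal triples $(n+1,0,0),(0,n+1,0),(0,0,n+1)$, whose contribution reproduces $\delta^{2}_{\rm YB}(\phi_{n+1})$ term by term by direct comparison with the defining formula of the second YB differential, and the complementary triples $\hat\Gamma_{n+1}$, whose sum is by definition $\Theta_{n+1}$. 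Setting the total to zero yields equation \eqref{eqn:higher_deform0}.

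Next I would address the enhancement conditions. Analogously expanding $(\tilde\mu\otimes\tilde\mu)\tilde R - \tilde R(\tilde\mu\otimes\tilde\mu)$ at order $\hbar^{n+1}$ produces a sum over $\Gamma^{3}_{n+1}$, from which the three extremal triples must again be separated; on those, the terms involve $\phi_{n+1}$ or $\mu_{n+1}$ paired with $R$ or $\mu$ and can be reorganised using $R(\mu\otimes\mu) = (\mu\otimes\mu)R$ which is valid for the undeformed EYBO, leaving the interior sum indexed by $\hat\Gamma_{n+1}$ as the first equation of \eqref{eqn:higher_deform}. For the partial-trace equations I would first determine the degree-$(n+1)$ coefficient $\hat\phi_{n+1}$ of the inverse $\tilde R_{(n+1)}^{-1}$ by solving $\tilde R_{(n+1)}\tilde R_{(n+1)}^{-1} \equiv \mathbb 1 \pmod{\hbar^{n+2}}$ recursively: writing $\tilde R_{(n+1)}^{-1} = \sum_{i=0}^{n+1}\hbar^{i}\hat\phi_i$ with $\hat\phi_0 = R^{-1}$, the order-$\hbar^{n+1}$ relation expresses $\hat\phi_{n+1}$ in terms of $\phi_0,\dots,\phi_{n+1}$ and $\hat\phi_0,\dots,\hat\phi_n$. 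Substituting this into the $\hbar^{n+1}$-coefficient of $\tr_2(\tilde R^{\pm}(\tilde\mu\otimes\tilde\mu)) - \alpha^{\pm}\beta\tilde\mu$ and again splitting extremal from interior triples produces the two trace equations of \eqref{eqn:higher_deform}.

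The step I expect to be the main obstacle is the extremal-triple analysis of the partial-trace equations. Each extremal triple contributes a term such as $\tr_2(\phi_{n+1}(\mu\otimes\mu))$ or $\tr_2(R^{\pm}(\mu_{n+1}\otimes\mu))$, and I need to invoke the cyclicity of $\tr_2$ with respect to factors of the form $f\otimes\mathbb 1$, together with the partial-trace identity $\tr_2(R^{\pm}(\mathbb 1\otimes\mu)) = \alpha^{\pm}\beta\mathbb 1$ already satisfied by $S$, to collapse the extremal contributions into a clean multiple of $\mu_{n+1}$ that can be moved to the right-hand side. Theorem~\ref{thm:inf_YB_inv} provides the model for this reduction in the case $n=0$, and Lemma~\ref{lem:YB_complex} together with the deformation-theoretic proof in \cite{SZ-YBH} ensures that the YB-coboundary portion of the computation assembles with the correct signs so that the final equations are precisely those displayed in the statement.
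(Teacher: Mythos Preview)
Your overall plan---expand each EYBO axiom in $\hbar$ and isolate the degree-$(n+1)$ coefficient---is correct, and for the Yang--Baxter equation the extremal/interior split you describe is precisely how $\delta^2_{\rm YB}\phi_{n+1}+\Theta_{n+1}$ appears. The gap is in your treatment of the enhancement equations. Your assertion that the extremal triples in the commutativity relation ``can be reorganised using $R(\mu\otimes\mu)=(\mu\otimes\mu)R$'' so as to drop out is not correct: the six extremal summands $(\mu_{n+1}\otimes\mu)R$, $(\mu\otimes\mu_{n+1})R$, $(\mu\otimes\mu)\phi_{n+1}$, $-\phi_{n+1}(\mu\otimes\mu)$, $-R(\mu_{n+1}\otimes\mu)$, $-R(\mu\otimes\mu_{n+1})$ do not cancel, and that relation does not simplify any of them. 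The same failure recurs in the partial-trace conditions: the extremal term $\tr_2(\phi_{n+1}(\mu\otimes\mu))$ involves $\phi_{n+1}$ rather than $\mu_{n+1}$ and cannot be ``collapsed into a clean multiple of $\mu_{n+1}$'' by cyclicity alone. In short, the passage from the full $\Gamma_{n+1}$-sum to the restricted $\hat\Gamma_{n+1}$-sum in \eqref{eqn:higher_deform} cannot be achieved by the reductions you propose. The paper's proof does not attempt any such reduction either: it simply states that taking degree-$(n+1)$ terms yields the displayed equations, without separating extremal from interior triples for the enhancement conditions.

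There is also a significant difference in emphasis. After disposing of the enhancement equations in one sentence each, the bulk of the paper's proof is not about deriving \eqref{eqn:higher_deform} but about establishing that $\tilde R_{(n)}$ is invertible modulo $\hbar^{n+1}$ for every $n$, so that the $\hat\phi_i$ in the last equation of \eqref{eqn:higher_deform} are well defined. The paper does this via an explicit closed formula $\hat\phi_n=\sum\mathbb S(\phi_{i_1}|\cdots|\phi_{i_k})$ built from symmetrised products $\phi_0^{-1}\phi_{\sigma(i_1)}\phi_0^{-1}\cdots\phi_0^{-1}\phi_{\sigma(i_k)}\phi_0^{-1}$, and verifies by induction that this single expression simultaneously solves both the left-inverse and right-inverse recursions \eqref{eqn:system_inverse_right} and \eqref{eqn:system_inverse_left}. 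Your recursive determination of $\hat\phi_{n+1}$ produces a one-sided inverse; the explicit symmetrised formula and the verification that left and right inverses agree constitute the main technical content of the paper's proof, which your proposal omits.
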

		
\begin{proof} 
	First we  show that enhanced higher order deformations are equivalent to the system~\eqref{eqn:higher_deform0} and \eqref{eqn:higher_deform}. It is a known result, see for instance \cite{SZ-YBH}, that the obstruction for $\phi_{n+1}$ to be an order $n+1$ YB deformation of $\tilde \phi$ is $\delta^2_{\rm YB} \phi_{n+1} + \Theta_{n+1} = 0$, where the left-hand side lies in the third cohomology group of $R$. Hence 
		$\phi_{n+1}$ extends $\tilde \phi$ to a YBO if and only if the first equality of~\eqref{eqn:higher_deform0} holds.

		Commutativity of $\tilde R_{(n+1)}= \tilde R_{(n)}+ \hbar^{n+1}\phi_{n+1}$ with the deformed 
		$\tilde  \mu_{n+1} = \tilde \mu_{n} + \hbar^{n+1}\mu_{n+1}$ means that Equation~\eqref{eqn:mu_comm} holds with $\tilde \mu_{(n+1)}$ and $\tilde R_{(n+1)}$. Taking terms of degree $n+1$ in $\hbar$ gives the first  
		equation in~\eqref{eqn:higher_deform}. Equation~\eqref{eqn:partial_tr} with positive sign gives the 
		second 
		equation of~\eqref{eqn:higher_deform} when considering terms of degree $n+1$ in $\hbar$. Similarly, we obtain the third 
		 equation of the system from Equation~\eqref{eqn:partial_tr} with negative sign, where the existence of the inverse of $\tilde R_{(n+1)} $ has been shown in the first part of the proof. This completes the first statement of the theorem. 

		Next  we  
		show that $\tilde R_{(n)}  = \sum_{i=0}^n \hbar^i\phi_i$ is invertible modulo $\hbar^n$ for any $n$. Let us set 
	 $\tilde R^{-1}_{(n)}  = \sum_{i=0}^n \hbar^i  \hat \phi_i$. The fact that  $\tilde R^{-1}_{(n)}$ is an inverse to $\tilde R_{(n)}$ is equivalent to the following two systems. 
		\begin{eqnarray}\label{eqn:system_inverse_right}
	\hat \phi_0 = \phi_0^{-1}, \ 
					\ldots, \ 
					\hat\phi_k = -\phi_0^{-1}\sum_{i=1}^k \phi_i\hat\phi_{k-i},
					\  \ldots, \ 
										\hat\phi_n = -\phi_0^{-1}\sum_{i=1}^n \phi_i\hat\phi_{n-i}
\end{eqnarray}
		and 
		\begin{eqnarray}\label{eqn:system_inverse_left}
\hat \phi_0 = \phi_0^{-1}, \ \ldots, \  	\hat\phi_k = -\sum_{i=1}^k \phi_i\hat\phi_{k-i}\cdot \phi_0^{-1}, \ \ldots, \  
 	\hat\phi_n = -\sum_{i=1}^n \phi_i\hat\phi_{n-i} \cdot \phi_0^{-1}
		\end{eqnarray}
		where \eqref{eqn:system_inverse_right} gives right-invertibility, and \eqref{eqn:system_inverse_left} corresponds to left invertiblity. 
		Additionally, we need to verify that the solutions obtained by the two systems coincide. Let $\phi_{i_1}, \ldots, \phi_{i_k}$ be a list of elements appearing in $\tilde \phi$, not necessarily mutually distinct. We define the symmetrized expression
		\begin{eqnarray}
				(-1)^n\mathbb S(\phi_{i_1}|\ldots|\phi_{i_k}) = \sum_{\sigma\in \Sigma_k} \frac{1}{q_1!\cdots q_t!} \phi_0^{-1}\phi_{\sigma(i_1)}\phi_0^{-1} \cdots \phi_0^{-1} \phi_{\sigma(i_k)}\phi_0^{-1},
		\end{eqnarray}
		where $q_1, \ldots, q_t$ indicate the number of repeated indices among the $i_r$. Then, we have for example 
		$$\mathbb S(\phi_1| \cdots | \phi_1) = - \phi_0^{-1}\phi_1\phi_0^{-1} \cdots \phi_0^{-1} \phi_1\phi_0^{-1}$$
		when $\phi_1$ is repeated $2n+1$ times, 
		$$\mathbb S(\phi_1| \cdots | \phi_1) = \phi_0^{-1}\phi_1\phi_0^{-1} \cdots \phi_0^{-1} \phi_1\phi_0^{-1}$$
		 when $\phi_1$ is repeated $2n$ times. Similarly, by a direct computation we have 
		 $$\mathbb S(\phi_1|\phi_1|\phi_2) = - \phi_0^{-1}\phi_1\phi_0^{-1}\phi_1\phi_0^{-1}\phi_2\phi_0^{-1} - \phi_0^{-1}\phi_1\phi_0^{-1}\phi_2\phi_0^{-1}\phi_1\phi_0^{-1} - \phi_0^{-1}\phi_2\phi_0^{-1}\phi_1\phi_0^{-1}\phi_1\phi_0^{-1}$$
		  and so forth. We  
		  now show  that the inverse of $\tilde \phi$ is given by 
		\begin{eqnarray}\label{eqn:phin}
				\hat \phi_n = \sum \mathbb S(\phi_{i_1}| \cdots |\phi_{i_k}),
		\end{eqnarray}
		where the sum is taken over all the possible choices of indices $i_1, \ldots, i_k$ such that $i_1 \leq i_2 \leq \cdots \leq i_k$ and $\sum_{d=1}^k i_d = n$. We proceed by induction to show that $\hat \phi_n$ chosen as in Equation~\eqref{eqn:phin} gives a solution of the system of equations~\eqref{eqn:system_inverse_left}. 
		
		The case $n=0$ is trivial, since $\hat \phi_0 = \phi_0^{-1}$, and case $n=1$ was seen to produce $\hat \phi_1 = \phi_0^{-1}\phi_1\phi_0^{-1}$, which coincides with $\mathbb S(\phi_1)$, and therefore with Equation~\eqref{eqn:phin} since there is no sum. 
		
		Suppose now that the inverse for $\tilde R_{(n)}$ has been determined when there are $n$ terms, with $n>1$. When we consider~\eqref{eqn:system_inverse_left} for $\tilde R_{(n+1)} $. 
		 The first $n$ equations are identical to the case of length $n$. Therefore, they are given by Equation~\eqref{eqn:phin} by the inductive assumption. The $(n+1)^{\rm th}$ equation is given by 
		$$\hat \phi_{n+1} = - \phi_0^{-1} \sum_{i=1}^{n+1} \phi_i \hat \phi_{n+1-i} = - \phi_0^{-1} \sum_{i=1}^{n+1} \phi_i\sum \mathbb S(\phi_{i_1}|\cdots |\phi_{i_k}), $$
		where the second sum is defined as in Equation~\eqref{eqn:phin}. A direct inspection shows that the sum in $\hat \phi_{n+1}$ gives all the possible product of terms of type $\phi_0^{-1}\phi_{i_1} \phi_0^{-1} \cdots \phi_0^{-1}\phi_{i_r}\phi_0^{-1}$ where the indices $i_1, \ldots, i_r$ add up to $n+1$. This completes the inductive step. The case of~\eqref{eqn:system_inverse_right} is analogous. This procedure also shows that both right and left inverses coincide, therefore implying that $\tilde \phi$ is invertible for any $n$. This implies that the system of equations~\eqref{eqn:higher_deform} is well posed.
		\end{proof}

\begin{definition}
	{\rm
			We say that $\phi_1$ is an {\rm integrable enhanced deformation} when the obstruction to deforming $\tilde R_{(n)} = R + \sum_{i=1}^n\hbar^i\phi_i$ to degree $n+1$, $\tilde R_{(n+1)} = R + \sum_{i=1}^{n+1}\hbar^i\phi_i$  vanishes for all $n$. In other words, if for all $n=1, 2, \ldots $ 
			 there exists 
			$\phi_{n+1}$ such that $\tilde R_{(n)} = R + \sum_{i=1}^{n+1}\hbar^i\phi_i$ is an EYBO over 
			$\tilde V_{(n)} = \mathbb k[[\hbar]]/(\hbar^{n+1})\otimes V$. In this case, we formally write $\tilde R = \sum_{i=0}^\infty \hbar^i\phi_i$, where we set $R = \phi_0$.  
	}
\end{definition}

\begin{theorem}\label{thm:higher}
Let a knot $K$ be given by a diagram  that is in a closed braid form of an $m$-braid $b_m$. Suppose that $\tilde R_{(n)}$ is an EYBO with inverse $\tilde R^{-1}_{(n)} $
for all $n=1, \ldots, N$. 

Let $\Psi_{ (\tilde R_{(N)}, \tilde R^{-1}_{(N)} ) } (b_m) $ 
 be the operator defined from deformed YBOs $\tilde R^{\pm}_{(n)} $. Then we have that
\begin{eqnarray}
				\Phi_{ (\tilde R_{(N)}, \tilde R^{-1}_{(N)} )  } (b_m) = 	\sum_{(i,j_1,\ldots,j_m)\in \Gamma^m_n}  \alpha^{-w(b_m)}\beta^{-m}\tr(\Psi_{ (\tilde R_{(N)}, \tilde R^{-1}_{(N)}) } (b_m)(\mu_{j_1}\otimes \cdots \otimes \mu_{j_m}))
		\end{eqnarray}
is invariant under Markov moves, and therefore defines a knot invariant  $\Phi_\phi (K)$.

Write $	\Phi_{ (\tilde R_{(N)}, \tilde R^{-1}_{(N)} )  }  (K) = 	\sum_{i=0}^n \hbar^i \Phi^i_{ (\tilde R_{(N)}, \tilde R^{-1}_{(N)} )  } (K) $, then we have
	\begin{eqnarray}
			\Phi^i_{ (\tilde R_{(N)}, \tilde R^{-1}_{(N)} )  } (K)=	\sum_{(i,j_1,\ldots, j_m)\in \Gamma^{m+1}_k} \alpha^{-w(b_m)}\beta^{-m}\tr(\Psi^i_{ (\tilde R_{(N)}, \tilde R^{-1}_{(N)} )  } (b_m)(\mu_{j_1}\otimes \cdots \otimes \mu_{j_m}))
		\end{eqnarray}

\end{theorem}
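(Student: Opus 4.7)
The plan is to reduce everything to Turaev's classical invariance theorem, now applied to the truncated EYBO rather than to the undeformed one. By hypothesis (and by the proof of Theorem~\ref{thm:higher_YB_inv}), for each $n \leq N$ the quadruple $(\tilde R_{(n)}, \alpha, \beta, \tilde \mu_{(n)})$ is an EYBO over the truncated coefficient ring $\mathbb k[[\hbar]]/(\hbar^{n+1})$, equipped with the explicit two-sided inverse $\tilde R^{-1}_{(n)}$ constructed there via the symmetrized expressions $\mathbb S(\phi_{i_1}|\cdots|\phi_{i_k})$. In particular, $\tilde R_{(N)}$ is a genuine EYBO on the extended module $\tilde V_{(N)} = \mathbb k[[\hbar]]/(\hbar^{N+1}) \otimes V$.

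First I would invoke Turaev's theorem directly for the EYBO $\tilde R_{(N)}$ on $\tilde V_{(N)}$, as already done in the infinitesimal case (Theorem~\ref{thm:inf_YB_inv}). The key observation is that Turaev's original argument uses only the defining EYBO relations~\eqref{eqn:mu_comm} and~\eqref{eqn:partial_tr}, together with formal properties of trace and partial trace valid over any commutative unital ring; nothing requires the ground ring to be a field. Consequently the quantity
$$
\alpha^{-w(b_m)}\beta^{-m}\tr\bigl(\Psi_{(\tilde R_{(N)}, \tilde R^{-1}_{(N)})}(b_m)\,\tilde \mu_{(N)}^{\otimes m}\bigr)
$$
is independent of the braid representative $b_m$ of $K$ and invariant under Markov conjugations and (de)stabilizations. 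It therefore descends to a well-defined knot invariant with values in $\mathbb k[[\hbar]]/(\hbar^{N+1})$, which is exactly $\Phi_{(\tilde R_{(N)}, \tilde R^{-1}_{(N)})}(K)$.

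Second, I would expand in powers of $\hbar$. Writing
$$
\Psi_{(\tilde R_{(N)}, \tilde R^{-1}_{(N)})}(b_m) = \sum_{i=0}^{N} \hbar^i \Psi^i(b_m) \pmod{\hbar^{N+1}}
$$
and $\tilde\mu_{(N)}^{\otimes m} = \sum \hbar^{j_1+\cdots+j_m}(\mu_{j_1}\otimes \cdots \otimes \mu_{j_m})$, multiplying the two and collecting the coefficient of $\hbar^k$ yields precisely the contributions indexed by $(i,j_1,\ldots,j_m)\in \Gamma^{m+1}_k$. Taking trace and multiplying by $\alpha^{-w(b_m)}\beta^{-m}$ produces the stated component formula for $\Phi^k_{(\tilde R_{(N)}, \tilde R^{-1}_{(N)})}(K)$; this step is pure bookkeeping.

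The only conceivable obstacle is checking that Turaev's Markov-invariance argument really does pass to the coefficient ring $\mathbb k[[\hbar]]/(\hbar^{N+1})$, which is not a field. However, his proof is diagrammatic and manipulates only the two EYBO identities algebraically, so the passage is immediate; and the inverse $\tilde R^{-1}_{(N)}$ needed to interpret negative braid generators was already produced in Theorem~\ref{thm:higher_YB_inv}. Thus there is no substantive new difficulty beyond what Theorem~\ref{thm:higher_YB_inv} has already settled, and the present theorem is essentially a corollary of that result combined with Turaev's classical construction.
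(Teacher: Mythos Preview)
Your proposal is correct and follows essentially the same approach as the paper: apply Turaev's invariance theorem directly to the EYBO $\tilde R_{(N)}$ over the truncated ring $\mathbb k[[\hbar]]/(\hbar^{N+1})$, then expand in powers of $\hbar$ and read off the degree-$k$ coefficient as the sum over $\Gamma^{m+1}_k$. The paper's own proof is in fact shorter and less explicit than yours, but the strategy is identical.
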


\begin{proof}
We define the quantum invariant of Turaev \cite{Tur} associated to the EYBO $\tilde \phi$. Since $\tilde \phi$ is graded by the formal variable $\hbar$, it follows that each coefficient of the monomials in $\hbar$ in the quantum invariant is invariant  under Markov moves. Considering the degree $n$ component in $\hbar$ we find the term $\sum_{(i,j_1,\ldots, j_m)\in \Gamma^{m+1}_k} \alpha^{-w(b_m)}\beta^{-m}\tr(\Psi^i_\phi(b_m)(\mu_{j_1}\otimes \cdots \otimes \mu_{j_m}))$, which is therefore an 
 invariant of the knot 
$K$.			This completes the proof.
\end{proof}

 Although it is desirable to have cencrete examples of higher order invariants as defined above, it is out of the scope of this paper, and the purpose of this section is to generalize this further to Laurent polynomial deformation to 
give an interpretation of the Jones and Alexander polynomials as such quantum cocycle invariant in the next section.

\section{The Jones and Alexander polynomials as higher order  invariants}\label{sec:Jones_Alex}
		
In this section we discuss how to interpret the Jones and Alexander polynomials as quantum cocycle invariants. We need a slight generalization of the framework considered up to now, before proceeding with the main results of the section. 

Note that YB cohomology used in this article does not depend on the invertibility of the YBO, as opposed to that used in \cite{Eis,Eis1}, for example. Therefore, it still makes sense to discuss YB cocycles of pre-YB operators, i.e. operators that satisty the YBE, but are not necessarily invertible. While we have assumed invertibility so far, in this section we will 
not make this assumption.

\begin{definition}\label{def:Laurent}
{\rm
Let 
$$\tilde R_{(n)}=\sum_{i=0}^n \hbar^i \phi_i  \in \tilde V_{(n)} = \mathbb k[[\hbar]]/(\hbar^{ n+1 } ) \otimes  V
\quad {\rm  and} \quad
\tilde R^{-1}_{(n)} = \sum_{i=0}^n \hat\hbar^i \hat \phi_i  \in \hat V_{(n)} = \mathbb k[[\hat \hbar]]/(\hat \hbar^{ n+1}  ) \otimes  V$$
be  pre-YBOs for all $n=0, \ldots, N$.
Regard $\tilde V_{(N)}$ and $\hat V_{(N)}$ as submodules of $$\mathcal V = \mathbb k [[\hbar, \hat \hbar ]]/ (\hbar \hat \hbar -1) .$$ 
If $\tilde R_{(N)}$ and $\tilde R^{-1}_{(N)}$ satisfy the YBE in $\mathcal V$, and are inverse to each other in $\mathcal V$, then we say that 
$(\tilde R_{(N)}, \tilde R^{-1}_{(N)})$ is a YBO obtained by {\it Laurent deformation } of pre-YBOs $(R, R^{-1})=(\phi_0, \hat \phi_0)$, of order $N$. 
We do not exclude the case where $N = \infty$, and $\tilde R_{(\infty)}=\sum_{i=0}^\infty \hbar \phi_i$, $\tilde R^{-1}_{(\infty)} = \sum_{i=0}^\infty \hat\hbar^i \hat \phi_i$. 
}
\end{definition}

We further elucidate the situation in Definition~\ref{def:Laurent}. For finite $N$, each $\tilde R_{(n)}$ is a pre-YBO modulo factors of degree $\hbar^{n+1}$, for $n=0, \ldots, N-1$. However, 
when considering $\tilde  R_{(N)}$, 
 it is assumed in the definition that 
this operator satisfies the YBE exactly, in the sense that the YBE is satisfied even without quotienting modulo $\hbar^{N+1}$. 
A similar assumption is made for $R^{-1}_{(n)}$. Moreover, the operators $\tilde R_{(N)}$ and $\tilde  R^{-1}_{(N)}$ are inverse to each others when the condition $\hat \hbar = \hbar^{-1}$ is imposed. When $N=\infty$, we have that $\tilde  R_{(n)}$ and $\tilde  R^{-1}_{(n)}$ are pre-YBOs for each $n\in \mathbb N$, but the full power series satisfy the  YBE exactly, and they are inverse to each other. When deformations are integrable, we can automatically find such $\tilde  R_{(\infty)}$ and $\tilde  R^{-1}_{(\infty)}$. We will provide two examples of finite order Laurent deformations that correspond to the Jones and Alexander polynomials below. They are both Laurent deformations of order $2$, and we will see that while the first order deformations are only pre-YBOs, hence they are invertible only modulo degrees of order $2$, the second order deformations satisfy the YBE exactly, and are invertible exactly.

\begin{lemma}
Suppose that 
$\tilde R_{(n)}$ and $R^{-1}_{(n)}$ 
are pre-YBOs, and that $\phi_{n+1}$ and
$\hat \phi_{n+1} $ satisfy Equation~\ref{eqn:higher_deform0}:
	$$\delta^2_{\rm YB}\phi_{n+1} + \Theta_{n+1} = 0 \quad {\rm and } \quad \delta^2_{\rm YB}\hat \phi_{n+1} + \Theta_{n+1} = 0$$
respectively, for all $n=1, \ldots, n$, and that $\tilde R_{(n+1)}$ and $R^{-1}_{(n+1)}$ are inverses to each other in $\mathcal V$,
then $( \tilde R_{(n+1)}, \tilde R^{-1}_{(n+1)}  )$ is a Laurent deformation of $(\phi_0, \hat \phi_0)$.
\end{lemma}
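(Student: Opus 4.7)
The plan is to verify each condition in Definition~\ref{def:Laurent} at order $N=n+1$. The pre-YBO property at every level $k\leq n$ is part of the hypothesis, and the pre-YBO property at level $n+1$ for both $\tilde R_{(n+1)}$ and $\tilde R^{-1}_{(n+1)}$ is exactly what Equation~\eqref{eqn:higher_deform0} supplies through Theorem~\ref{thm:higher_YB_inv}, applied to $R$ and to $R^{-1}$ respectively. The mutual inverse condition in $\mathcal V$ is also a hypothesis. Hence the only nontrivial step is to upgrade the pre-YBO condition at level $n+1$ (YBE modulo $\hbar^{n+2}$) to the exact YBE of $\tilde R_{(n+1)}$ in $\mathcal V$.

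For this, I would set $A := \tilde R_{(n+1)}$, $L_A := (A\otimes \mathbb 1)(\mathbb 1\otimes A)(A\otimes \mathbb 1)$, $R_A := (\mathbb 1\otimes A)(A\otimes \mathbb 1)(\mathbb 1\otimes A)$, and $\Omega(A) := L_A - R_A$. First I would establish the identity
\[
\Omega(A) \;=\; -\,L_A\,\Omega(A^{-1})\,R_A,
\]
which follows directly from $L_A^{-1}=L_{A^{-1}}$ and $R_A^{-1}=R_{A^{-1}}$ together with the exact invertibility of $A$ in $\mathcal V$: $L_A\Omega(A^{-1})R_A = L_A L_A^{-1}R_A - L_A R_A^{-1}R_A = R_A-L_A$. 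This identity links the Yang-Baxter defect $\Omega(A)$, which is a polynomial in $\hbar$ with non-negative powers, to $\Omega(A^{-1})$, which is a polynomial in $\hat\hbar$.

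The core of the argument is then a degree-by-degree induction. By the pre-YBO property of $\tilde R^{-1}_{(n+1)}$, one has $\Omega(A^{-1}) = \sum_{k=n+2}^{3(n+1)} \hat\hbar^{\,k}\hat\omega_k$. Extracting the coefficient of $\hat\hbar^{\,n+2}$ in the identity, the left-hand side vanishes (as $\Omega(A)$ carries no negative powers of $\hbar$), while the right-hand side collapses to $-L_{A,0}\,\hat\omega_{n+2}\,R_{A,0}$. Since $\phi_0=R$ is a YBO, both $L_{A,0}$ and $R_{A,0}$ are invertible, forcing $\hat\omega_{n+2}=0$. Proceeding analogously at $\hat\hbar^{\,n+3}, \hat\hbar^{\,n+4},\ldots$ yields $\hat\omega_k=0$ for every $k$, so $\Omega(A^{-1})=0$ in $\mathcal V$, and the identity then gives $\Omega(A)=0$ in $\mathcal V$.

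The hard part, I expect, is the conceptual observation that exact invertibility in $\mathcal V$ can be leveraged via the identity above to promote a pre-YBO defect bounded merely at degree $\hbar^{n+2}$ to exact vanishing, since the pre-YBO hypothesis alone controls only a small portion of the polynomial $\Omega(A)$ of total degree $3(n+1)$. A secondary subtlety is the correct reading of the obstruction equation for $\hat\phi_{n+1}$: here $\Theta_{n+1}$ and $\delta^2_{\rm YB}$ must be taken with respect to the YBO $R^{-1}$, so that Theorem~\ref{thm:higher_YB_inv} applies symmetrically to $\tilde R^{-1}_{(n+1)}$ and yields the pre-YBO property that the induction above relies upon.
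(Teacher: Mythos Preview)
Your argument is more ambitious than the paper's. The paper simply observes that in the proof of Theorem~\ref{thm:higher_YB_inv} the obstruction identity $\delta^2_{\rm YB}\phi_{n+1}+\Theta_{n+1}=0$ was shown to be equivalent to $\tilde R_{(n+1)}$ satisfying the YBE in $\tilde V_{(n+1)}$, and that this part of the argument never used invertibility of any $\tilde R_{(k)}$; the same observation applies verbatim to $\tilde R^{-1}_{(n+1)}$. That is the entirety of the paper's proof --- it does not separately address an upgrade from ``pre-YBO in $\tilde V_{(n+1)}$'' to ``exact YBE in $\mathcal V$.''

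You correctly notice that Definition~\ref{def:Laurent} asks for the exact YBE of $\tilde R_{(N)}$ in $\mathcal V$, and you try to derive it from exact invertibility via the identity $\Omega(A)=-L_A\,\Omega(A^{-1})\,R_A$. This attempt, however, does not go through. First, the coefficient of $\hat\hbar^{\,n+2}$ on the right does \emph{not} collapse to $-L_{A,0}\hat\omega_{n+2}R_{A,0}$: since $L_A$ and $R_A$ carry positive powers of $\hbar$, every $\hat\omega_k$ with $k\ge n+2$ contributes at that degree; to isolate a single $\hat\omega$ you would have to start your descent at the top degree $\hat\hbar^{\,3(n+1)}$, not at $\hat\hbar^{\,n+2}$. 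Second, and fatally, you invoke ``since $\phi_0=R$ is a YBO'' to conclude that $L_{A,0}$ and $R_{A,0}$ are invertible; but in this section $\phi_0$ is only a \emph{pre}-YBO, and in the Jones and Alexander examples of Theorems~\ref{thm:Jones} and~\ref{thm:Alex} the degree-zero matrices $J_0$ and $\Delta_0$ are expressly non-invertible. Without invertibility of $\phi_0$ the relation $L_{A,0}\hat\omega_kR_{A,0}=0$ does not force $\hat\omega_k=0$, and your induction stalls at its first step. Thus your degree-by-degree argument does not establish the exact YBE; in the applications that condition is checked directly by computation rather than supplied by this lemma.
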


\begin{proof}
In the proof of the first statement of Theorem~\ref{thm:higher_YB_inv}, it was not assumed that $\tilde R_{(n)}$ is 
invertible for any $n$, and that Equation~\eqref{eqn:higher_deform0} is sufficient for $\tilde R_{(n)}$
to be a YBO. This also holds for $R^{-1}_{(n)}$ as well. 
\end{proof}

\begin{definition}\label{def:ELaurent}
{\rm

Let $(\tilde R_{(N)}, \tilde R^{-1}_{(N)})$ be a Laurent deformation of $(\phi_0, \hat \phi_0)$ as in Definition~\ref{def:Laurent}.
If $\tilde R_{(N)}$ and $\tilde R^{-1}_{(N)}$ are both enhanced YBO, then we call the pair 
$(\tilde R_{(N)}, {\tilde R_{(N)}}^{-1} )$ a {\it Laurent enhanced YBO (LEYBO)}.
}
\end{definition}

\begin{lemma}\label{lem:LEYBOinv}
Let 
 $(\tilde R_{(N)}, \tilde R^{-1}_{(N)})$ be a Laurent enhanced YBO. 
 Let $b_m$ be an $m$-braid whose closure represents a knot $K$. (end replace, below $D$ is replaced by $b_m$) 
Let $\Psi_{(\tilde R_{(N)}, \tilde R^{-1}_{(N)}) } (b_m)$ be the braid representation defined from 
 $(\tilde R_{(N)}, \tilde R^{-1}_{(N)})$.
Let 
\begin{eqnarray*}
				\Phi_{(\tilde R_{(N)}, \tilde R^{-1}_{(N)})}(b_m) = 	\sum_{(i,j_1,\ldots,j_m)\in \Gamma^m_n}  \alpha^{-w(b_m)}\beta^{-m}\tr(\Psi_{(\tilde R_{(N)}, \tilde R^{-1}_{(N)})  }(b_m)(\mu_{j_1}\otimes \cdots \otimes \mu_{j_m}))
		\end{eqnarray*}
be as in Theorem~\ref{thm:higher}.
 Then $\Phi_{(\tilde R_{(N)}, \tilde R^{-1}_{(N)}) }(b_m) $ defines a knot invariant $\Phi_{(\tilde R_{(N)}, \tilde R^{-1}_{(N)}) }(K) $,  called a {\it quantum cocycle invariant from LEYBO}.
\end{lemma}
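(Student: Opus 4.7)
The plan is to reduce the statement to Turaev's classical invariance theorem~\cite{Tur} by working in the Laurent ring $\mathcal V = \mathbb k[[\hbar,\hat\hbar]]/(\hbar\hat\hbar - 1)$, where all relevant identities hold exactly rather than merely up to some truncation. First I would observe that by Definition~\ref{def:ELaurent}, the pair $(\tilde R_{(N)},\tilde R^{-1}_{(N)})$ furnishes a genuine enhanced YBO on $\mathcal V\otimes V$: the two operators satisfy the YBE, are mutually inverse in $\mathcal V$, and carry enhancement data $(\alpha,\beta,\tilde\mu_{(N)})$ satisfying the compatibility conditions~\eqref{eqn:mu_comm} and~\eqref{eqn:partial_tr} exactly in $\mathcal V$. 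This is the key payoff of Definition~\ref{def:Laurent}: although the truncations $\tilde R_{(n)}$ for $n<N$ were only pre-YBOs modulo $\hbar^{n+1}$, at level $N$ the YBE and inversion hold on the nose inside $\mathcal V$.

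Next I would apply Turaev's construction verbatim in this enlarged coefficient ring. The assignments $\sigma_i^\pm\mapsto \mathbb 1^{\otimes (i-1)}\otimes \tilde R_{(N)}^\pm\otimes \mathbb 1^{\otimes (m-i-1)}$ define a representation $\mathbb B_m\to \mathrm{End}_{\mathcal V}(\mathcal V^{\otimes m}\otimes V^{\otimes m})$, since the YBE together with mutual inversion are exactly the braid relations. Invariance of the weighted trace
\begin{eqnarray*}
\alpha^{-w(b_m)}\beta^{-m}\tr\bigl(\Psi_{(\tilde R_{(N)},\tilde R_{(N)}^{-1})}(b_m)\,\tilde\mu_{(N)}^{\otimes m}\bigr)
\end{eqnarray*}
under Markov conjugation $b_m\mapsto g\, b_m\, g^{-1}$ follows from the cyclicity of the trace combined with the commutation $[\tilde R_{(N)},\tilde\mu_{(N)}\otimes\tilde\mu_{(N)}]=0$, while invariance under positive and negative stabilization $b_m\mapsto b_m\sigma_m^{\pm 1}\in\mathbb B_{m+1}$ follows from the partial-trace identities $\tr_2(\tilde R_{(N)}^{\pm}(\mathbb 1\otimes\tilde\mu_{(N)}))=\alpha^{\pm}\beta\,\mathbb 1$ applied to the last tensor factor. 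This is Turaev's original argument, now interpreted over $\mathcal V$. The expansion in the statement, $\Phi_{(\tilde R_{(N)},\tilde R_{(N)}^{-1})}(K) = \sum_{(i,j_1,\ldots,j_m)\in \Gamma^m_n}\alpha^{-w(b_m)}\beta^{-m}\tr(\Psi\cdot(\mu_{j_1}\otimes\cdots\otimes\mu_{j_m}))$, is obtained by simply expanding the total trace in monomials of $\hbar$ and $\hat\hbar$; since each monomial-coefficient separately inherits Markov invariance, each is a knot invariant.

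The main obstacle I anticipate is purely bookkeeping, namely making precise that the ring $\mathcal V = \mathbb k[[\hbar,\hat\hbar]]/(\hbar\hat\hbar-1)$ is well-behaved enough to support the standard trace machinery (cyclicity of partial traces, $\mathcal V$-linearity of all compositions) despite the two-variable topological completion. This reduces to observing that all operators $\tilde R_{(N)}^{\pm}$ and $\tilde\mu_{(N)}$ are $\mathcal V$-linear of finite order in $\hbar$ and $\hat\hbar$, so every trace and partial trace computation lives in a locally finite $\mathcal V$-submodule where the identities from Turaev's proof go through unchanged. Once this is observed, invariance under the remaining Reidemeister moves (captured by Markov moves on braid closures) is immediate, and the well-definedness of $\Phi_{(\tilde R_{(N)},\tilde R_{(N)}^{-1})}(K)$ follows.
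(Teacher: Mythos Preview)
Your proposal is correct and follows essentially the same route as the paper: both reduce the statement to Turaev's invariance theorem by observing that at level $N$ the pair $(\tilde R_{(N)},\tilde R^{-1}_{(N)})$ is a genuine EYBO in $\mathcal V$, so the standard Markov-trace argument applies verbatim, with invertibility of the intermediate truncations $\tilde R_{(n)}$ for $n<N$ never being needed. Your write-up is in fact more detailed than the paper's own proof, which simply points back to the proof of Theorem~\ref{thm:higher} and notes that only invertibility at level $N$ is required.
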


\begin{proof}
The proof of Theorem~\ref{thm:higher} applies without assumption of invertibility of
$\tilde R_{(n)}$ and $\tilde R^{-1}_{(n)}$ for $n < N$, and only requires invertibility of 
$\tilde R_{(N)}$ and $\tilde R^{-1}_{(N)}$. 
\end{proof}

\begin{theorem}\label{thm:Jones}
			The Jones polynomial is a 
			quantum cocycle invariant from LEYBO. 
			Moreover, let $D$ be a closed braid diagram of the knot $ K$ on $n$ strings, with $m^+$ positive  and $m^-$ negative crossings.
			Then, we have
			\begin{eqnarray*}
					(\hbar+\hbar^{-1})J(K) = \hbar^{m^+-m^--n} 
					\Phi_{(\tilde R_{(N)}, \tilde R^{-1}_{(N)}) }(K)
			\end{eqnarray*}
			for some 
			$\tilde R_{(N)}$ and $\tilde R^{-1}_{(N)}$. 
\end{theorem}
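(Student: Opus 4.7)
The plan is to realize the Jones polynomial as a Turaev-type invariant of a specific EYBO and then exhibit that EYBO as an LEYBO in the sense of Definition~\ref{def:ELaurent}; the explicit formula in the theorem will then follow by tracking normalizations. First I would recall the classical construction of the Jones polynomial via the R-matrix of $U_q(\mathfrak{sl}_2)$ on the two-dimensional representation: on $V = \mathbb k^2$ with basis $e_0, e_1$, take
\begin{eqnarray*}
R_J(e_i \otimes e_j) =
\begin{cases}
q \, e_i \otimes e_i & \text{if}\ i = j,\\
e_j \otimes e_i & \text{if}\ i < j,\\
e_j \otimes e_i + (q - q^{-1}) e_i \otimes e_j & \text{if}\ i > j,
\end{cases}
\end{eqnarray*}
together with a diagonal $\mu_J$ and scalars $\alpha_J, \beta_J$ that are specific powers of $q$. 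It is classical that the resulting Turaev EYBO $S_J = (R_J, \alpha_J, \beta_J, \mu_J)$ satisfies $T_{S_J}(K) = (q + q^{-1}) J(K)$, where $J$ is the normalized Jones polynomial.

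Next, I would identify $q$ with $\hbar$ and $q^{-1}$ with $\hat \hbar$, embedding $R_J$ into $\mathrm{End}(V^{\otimes 2}) \otimes \mathcal V$, where $\mathcal V = \mathbb k[[\hbar, \hat \hbar]]/(\hbar\hat\hbar - 1)$. Because $R_J$ contains both $q$ and $q^{-1}$, it is not a polynomial in $\hbar$ alone, so I would pass to the rescaling
$$\tilde R_{(2)} := \hbar\, R_J \qquad \text{and} \qquad \tilde R^{-1}_{(2)} := \hat \hbar\, R_J^{-1}.$$
A direct inspection shows that $\tilde R_{(2)} = \phi_0 + \hbar \phi_1 + \hbar^2 \phi_2$ is a polynomial in $\hbar$ of degree two with constant term $\phi_0$ a nonzero non-invertible pre-YBO, and analogously for $\tilde R^{-1}_{(2)}$ as a polynomial in $\hat\hbar$. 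By construction $\tilde R_{(2)} \tilde R^{-1}_{(2)} = \hbar \hat\hbar\, R_J R_J^{-1} = \mathbb 1$ in $\mathcal V$, and the YBE is preserved under the central rescaling, so $(\tilde R_{(2)}, \tilde R^{-1}_{(2)})$ is a Laurent deformation in the sense of Definition~\ref{def:Laurent}. Applying $\tr_2$ to the EYBO axioms for $S_J$ shows that $(\tilde R_{(2)}, \hbar \alpha_J, \beta_J, \mu_J)$ is an EYBO, and the analogous statement for $\tilde R^{-1}_{(2)}$ makes the pair an LEYBO. Lemma~\ref{lem:LEYBOinv} then yields a well-defined invariant $\Phi(K) := \Phi_{(\tilde R_{(2)}, \tilde R^{-1}_{(2)})}(K)$.

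To establish the explicit formula, I would track the effect of the rescaling through the trace. Each positive (resp.~negative) crossing contributes an extra factor of $\hbar$ (resp.~$\hat \hbar$) relative to Turaev's operator $\Psi_{R_J}(b_m)$, so $\Psi_{(\tilde R_{(2)}, \tilde R^{-1}_{(2)})}(b_m) = \hbar^{m^+ - m^-}\, \Psi_{R_J}(b_m) = \hbar^{w(b_m)} \Psi_{R_J}(b_m)$. Substituting this into the normalization $(\hbar \alpha_J)^{-w(b_m)} \beta_J^{-n}$ of Lemma~\ref{lem:LEYBOinv}, the explicit factors of $\hbar$ combine, and comparison with Turaev's $T_{S_J}(K) = (\hbar + \hbar^{-1}) J(K)$ gives the identity $(\hbar + \hbar^{-1}) J(K) = \hbar^{m^+ - m^- - n} \Phi(K)$ after absorbing the powers of $\alpha_J$ and $\beta_J$ (both specific powers of $\hbar$ in the standard Jones EYBO).

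The main obstacle is this final bookkeeping: the target exponent $\hbar^{m^+ - m^- - n}$ must emerge exactly from the combination of (i) the scalar $\hbar$ introduced at each crossing by $\tilde R_{(2)} = \hbar R_J$, (ii) the $\alpha^{-w(b_m)}$ convention in Lemma~\ref{lem:LEYBOinv} versus the $\alpha^{w(b_m)}$ convention in Turaev's original construction, and (iii) the powers of $\hbar$ hidden in the classical choices of $\alpha_J$, $\beta_J$, $\mu_J$ for the Jones EYBO. Selecting the rescaling so that all these contributions balance to precisely $\hbar^{w(b_m) - n}$, with no residual factor, is the delicate step of the proof.
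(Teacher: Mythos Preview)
Your approach is essentially the paper's: start from the standard Jones $R$-matrix, rescale by a power of $\hbar$ so that it becomes a degree-two polynomial in $\hbar$ (and the inverse a degree-two polynomial in $\hat\hbar$), observe that this is a Laurent deformation in the sense of Definition~\ref{def:Laurent}, and then track the scalars. The paper carries this out with the explicit Ohtsuki matrix $(J,1,1,\mu)$ rather than the $U_q(\mathfrak{sl}_2)$ description, writes out $J_0,J_1,J_2$ by hand, and checks by computer that $J_0$ is a pre-YBO and $J_1$ a YB $2$-cocycle; but the underlying idea is identical to yours.

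The one point where the paper is sharper than your sketch is the treatment of $\mu$. You keep $\mu_J$ unchanged and propose absorbing its hidden powers of $\hbar$ into the ``delicate bookkeeping''; but $\mu_J=\mathrm{diag}(\hbar^{-1},\hbar)$ is not a polynomial in $\hbar$, so it does not sit inside the $\sum_i \hbar^i\mu_i$ decomposition implicit in Lemma~\ref{lem:LEYBOinv}. The paper resolves this by rescaling $\mu$ as well, writing $\hbar\mu=\mu_0+\hbar^2\mu_2$ with constant matrix coefficients. It is precisely the $n$ copies of this rescaled $\mu$, one per string, that produce the $\hbar^{-n}$ in the final exponent; the $\hbar^{m^+-m^-}$ comes, as you say, from the rescaling of $R$ at the crossings. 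Once you make this adjustment to $\mu$ your computation goes through cleanly and the exponent $m^+-m^--n$ falls out without further balancing of $\alpha_J,\beta_J$.
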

\begin{proof} 
		We explicitly construct  $\tilde R_{(N)}$ and $\tilde R^{-1}_{(N)}$. 
		
		Recall (e.g.  \cite{Ohtsuki})  that the Jones polynomial (up to a factor of $t^{\frac{1}{2}}+t^{-\frac{1}{2}}$)
		can be obtained as a trace invariant of an appropriate EYBO $(J,\mu, 1, 1)$. We use this EYBO and rewrite it using the formalism developed in this article. By  setting  $t^{\frac{1}{2}} = \hbar$ we have
			\begin{eqnarray*}
				J =  \begin{bmatrix}
					t^{\frac{1}{2}} & 0 & 0 & 0\\
					0 & 0 & t & 0\\
					0 & t & t^{\frac{1}{2}}-t^{\frac{3}{2}} & 0\\
					0 & 0 & 0 & t^{\frac{1}{2}}
				\end{bmatrix}
				= \hbar
				\begin{bmatrix}
						1 & 0 & 0 & 0\\
						0 & 0 & \hbar & 0\\
						0 & \hbar & 1-\hbar^2 & 0\\
						0 & 0 & 0 & 1
				\end{bmatrix}.
		\end{eqnarray*}
		 This  
		 form of $J$ can be rewritten as
		 $$J=J_0 + \hbar J_1 + \hbar^2 J_2, $$
where 	
	\begin{eqnarray*}
		J_0 = \begin{bmatrix}
		1 & 0 & 0 & 0\\
		0 & 0 & 0 & 0\\
		0 & 0 & 1 & 0\\
		0 & 0 & 0 & 1
		\end{bmatrix} ,
		\quad
		J_1= \begin{bmatrix}
		0 & 0 & 0 & 0\\
		0 & 0 & 1 & 0\\
		0 & 1 & 0 & 0\\
		0 & 0 & 0 & 0
		\end{bmatrix} ,
		\quad 
		J_2=		\begin{bmatrix}
	0 & 0 & 0 & 0\\
			0 & 0 & 0 & 0\\
			0 & 0 & -1 & 0\\
			0 & 0 & 0 & 0
		\end{bmatrix}.
	\end{eqnarray*}

 By  computer calculations, we find 
$J_0$ 	is a pre-YBO, i.e. it satisfies the YBE but it is 
  not invertible, 
and $J_1$ is a YB $2$-cocycle.
However, since the YBE for $J$ holds exactly, as opposed as being up to higher orders of $\hbar$ than $2$, it means that the deformation is integrable by adding zero maps of all orders. 
	
	To find an inverse, we proceed analogously by using the matrix
	\begin{eqnarray*}
			J^{-1} = \begin{bmatrix}
				1 & 0 & 0 & 0\\
				0 & 1-\hbar^{-2} & \hbar^{-1} & 0\\
				0 & \hbar^{-1} & 0 & 0\\
				0 & 0 & 0 & 1
			\end{bmatrix},
	\end{eqnarray*}
	and decompose this in terms of $\hbar^{-1}$, denoting the matrices by $\hat J_i$, $i=0,1,2$. The same discussion as for $J$ applies here too, mutatis mutandis.

	Next, we have
	\begin{eqnarray*}
			\mu = \begin{bmatrix}
				t^{-\frac{1}{2}} & 0\\
				0 & t^{\frac{1}{2}}
			\end{bmatrix}
					= \hbar^{-1} \begin{bmatrix}
						1 & 0\\
						0 & \hbar^2
					\end{bmatrix}.
	\end{eqnarray*}
	It follows that 
	\begin{eqnarray*}
			\hbar \mu = \mu_0 + \mu_2 = 
			\begin{bmatrix}
			1 & 0\\
			0 & 0
			\end{bmatrix}
			+ 
			\hbar^{-1} \begin{bmatrix}
			0 & 0\\
			0 & \hbar^2
			\end{bmatrix}.
	\end{eqnarray*}
	
	To show that 
	\begin{eqnarray*}
		(\hbar+\hbar^{-1})J( K) = \hbar^{m^+-m^--n} \Phi_{(\tilde R_{(N)}, {\tilde R_{(N)}}^{-1} )}(  K),
	\end{eqnarray*}
	we set $\phi = \sum_{i=0}^2 \hbar^i J_i$ and $\hat \phi = \sum_{i=0}^2 \hbar^i \hat J_i$. Then, 
	$\Phi_{(\tilde R_{(N)}, \tilde R_{(N)}^{-1} ) } (  K)$ differs from the Jones polynomial $J(  K)$ by only a multiplying power of $\hbar$, which was initially factored out when constructing the matrices $J$ and $J^{-1}$. Since $J$ needs to be multiplied by $\hbar$ and $J^{-1}$ by $\hbar^{-1}$, it follows that $\hbar^{m^+-m^-}$ corrects this deficiency and gives $J( K)$ as needed. Lastly, each $\mu$ contributes with a factor $\hbar^{-1}$, and there are $n$ of them since $K$ is obtained as the closure of a braid on $n$ strings.  
\end{proof}

Although the trace is taken only in the factors $2, \ldots, m$ leaving the first factor in the following theorem, 
so that it is not fully LEYBO as defined, it provides a similar description for the Alexander polynomial, and 
we make the following statement.

\begin{theorem}\label{thm:Alex}
	The Alexander polynomial $\Delta( K)$ of a  
	knot $ K$ is a  
	quantum cocycle invariant 
	in the following sense. 
		Let $K$ be represented by a closed braid form of an $m$-braid $b_m$. 
		Assume that $b_m$ has $m^+$ positive  and $m^-$ negative crossings. Then, we have
	\begin{eqnarray*}
		\Delta (K) \cdot \mathbb 1 = \hbar^{-m^++m^-+m-1} \tr_{2,\ldots, m}
		(\Psi_{(\tilde R_{(N)}, \tilde R^{-1} _{(N)} ) }(b_m) (\mathbb 1\otimes \mu^{\otimes (m-1)}) ) ,
	\end{eqnarray*}
	for some $\phi$ and its inverse $\hat \phi$, where $\tr_{2,\ldots, m}$ is the trace over the $2, 3, \ldots, m$ factors 
	 of ${\mathcal V}$.
\end{theorem}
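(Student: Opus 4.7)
The proof will closely parallel that of Theorem~\ref{thm:Jones}. The starting point is the classical fact (see, e.g., \cite{Ohtsuki}) that the Alexander polynomial arises as a partial-trace invariant of an enhanced YBO $(A,\mu,\alpha,\beta)$ on a $2$-dimensional vector space, where both $A$ and $\mu$ have entries in $\mathbb{Z}[t^{1/2},t^{-1/2}]$. In contrast to the Jones case, here the full Markov trace vanishes, so one instead takes the partial trace over all factors except the first, producing a scalar multiple of the identity on the remaining factor, and that scalar is (up to a unit) $\Delta(K)$.

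Setting $t^{1/2}=\hbar^{-1}$, so $t=\hbar^{-2}$, I would factor appropriate powers of $\hbar$ out of $A$ and $A^{-1}$, writing
$$\hbar^{a} A = \phi_0 + \hbar\phi_1 + \hbar^2\phi_2, \qquad \hbar^{b} A^{-1} = \hat\phi_0 + \hat\hbar\hat\phi_1 + \hat\hbar^2\hat\phi_2,$$
for suitable integers $a,b$, so that each $\phi_i,\hat\phi_i$ has entries in the base ring $\mathbb{k}$. As in the Jones computation, a direct matrix calculation shows that $\phi_0$ and $\hat\phi_0$ are pre-YBOs (satisfying the YBE, but not necessarily invertible), while the higher-order terms assemble into a Laurent deformation in the sense of Definition~\ref{def:Laurent}. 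Since the original $A$ satisfies the YBE exactly and $A,A^{-1}$ are honest inverses in $\mathcal V$, the deformation is automatically integrable (the obstructions $\Theta_{n+1}$ of Theorem~\ref{thm:higher_YB_inv} vanish beyond the explicit order), and the pair $(\tilde R_{(N)},\tilde R^{-1}_{(N)})$ is a LEYBO in the sense of Definition~\ref{def:ELaurent}. An analogous decomposition is performed for $\mu$, writing $\hbar^{c}\mu = \mu_0 + \hbar^{2c}\mu_{2c}$; each of the $m-1$ copies of $\mu$ used in the partial trace then contributes a net factor of $\hbar^{-c}$, and each positive (resp. negative) crossing contributes $\hbar^{-a}$ (resp. $\hbar^{-b}$). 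Collecting these exactly yields the normalizing factor $\hbar^{-m^{+}+m^{-}+m-1}$ stated in the theorem.

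The main obstacle is not the cocycle machinery, which transplants directly, but rather the bookkeeping of the partial trace, since Theorem~\ref{thm:higher} and Lemma~\ref{lem:LEYBOinv} are phrased in terms of the full Markov trace. One must observe that the partial-trace variant of Turaev's construction is still invariant under the Markov moves supported on the strands $2,\ldots,m$: conjugation by braids on those strands is handled by the cyclic property of $\tr_{2,\ldots,m}$, while stabilization there is controlled by the partial-trace identity $\tr_2(A^{\pm}(\mathbb{1}\otimes\mu)) = \alpha^{\pm}\beta\,\mathbb{1}$. Commutativity $(\mu\otimes\mu)A = A(\mu\otimes\mu)$ guarantees that the output is a scalar multiple of $\mathbb{1}$ on the open strand. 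Once these two facts are checked for the undeformed EYBO $(A,\mu,\alpha,\beta)$, the graded argument of Theorem~\ref{thm:higher} applies degree by degree in $\hbar$, and the resulting scalar coincides, after the explicit $\hbar$-normalization computed above, with $\Delta(K)\cdot\mathbb{1}$.
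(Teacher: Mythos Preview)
Your proposal is correct and follows essentially the same route as the paper: start from Ohtsuki's partial-trace description of $\Delta(K)$, factor a power of $\hbar$ out of the Alexander $R$-matrix and its inverse to obtain degree-$2$ Laurent deformations whose degree-$0$ parts are pre-YBOs, and then read off the normalization. The paper is in fact more cursory than you are about why the partial-trace variant of Turaev's argument survives; your paragraph on conjugation, stabilization on strands $2,\ldots,m$, and the commutativity of $\mu$ with $R$ is a welcome addition.

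One bookkeeping point: the paper takes $t^{1/2}=\hbar$, not $t^{1/2}=\hbar^{-1}$. With your convention the Alexander enhancement map is $\mu=\hbar^{-1}\begin{pmatrix}1&0\\0&-1\end{pmatrix}$, so each of the $m-1$ copies of $\mu$ contributes $\hbar^{-1}$ rather than $\hbar^{+1}$, and the collected exponent becomes $-m^{+}+m^{-}-(m-1)$ instead of the stated $-m^{+}+m^{-}+(m-1)$. Swapping to $t^{1/2}=\hbar$ (or, equivalently, swapping the roles of $\hbar$ and $\hat\hbar$ in your decomposition) fixes this and matches the theorem exactly. Also, for the Alexander $\mu$ the decomposition is simpler than you wrote: after factoring, $\hbar^{-1}\mu$ is already constant, so there is no higher $\mu_{2c}$ term as there was in the Jones case.
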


\begin{proof} 
			The proof is analogous to the case of the Jones polynomial above.  We have, from \cite{Ohtsuki}, the equality 
			\begin{eqnarray*}
					\Delta(K)\cdot \mathbb 1 = \tr_{2,\ldots, m}(\psi_{\hat \Delta}(b_m) (\mathbb 1\otimes \mu^{\otimes (m-1)})), 
			\end{eqnarray*}
			where $\psi_{\hat \Delta}(b_m)$ is the operator obtained from $b_m$ by replacing crossings with the matrix 
			\begin{eqnarray*}
				\hat \Delta = \begin{bmatrix}
					t^{-\frac{1}{2}} & 0 & 0 & 0\\
					0 & 0 & 1 & 0\\
					0 & 1 & t^{-\frac{1}{2}}-t^{\frac{1}{2}} & 0\\
					0 & 0 & 0 & -t^{\frac{1}{2}}
				\end{bmatrix}, 
			\end{eqnarray*}
			and 
			\begin{eqnarray*}
					\mu = \begin{bmatrix}
						t^{\frac{1}{2}} & 0\\
						0 & -t^{\frac{1}{2}}
					\end{bmatrix}
					=
				\hbar  \begin{bmatrix}
						1 & 0\\
						0 & -1
					\end{bmatrix}.
			\end{eqnarray*}
			We factor out a term of $\hbar^{-1} = t^{-\frac{1}{2}}$ from the matrix $\hat \Delta$ to obtain
			\begin{eqnarray*}
				\Delta = \begin{bmatrix}
					1 & 0 & 0 & 0\\
					0 & 0 & \hbar & 0\\
					0 & \hbar & 1-\hbar^2 & 0\\
					0 & 0 & 0 & -\hbar^2
				\end{bmatrix}.
			\end{eqnarray*}
							Its inverse is given by 
					\begin{eqnarray*}
						\Delta^{-1} = \begin{bmatrix}
							1 & 0 & 0 & 0\\
							0 & 1-\hbar^{-2} & \hbar^{-1} & 0\\
							0 & \hbar^{-1} & 0 & 0\\
							0 & 0 & 0 & -\hbar^{-2}
						\end{bmatrix}.
					\end{eqnarray*}

			In this case one needs to have $\Delta_i$ with $i=0,1,2$. 
			As in the case of the Jones polynomial, computer calculations show that $\Delta_0$ is a pre-YBO and $\Delta_1$ is a YB 2-cocycle.
			
			Since $\hbar^{-1}$ is factored out for $\Delta$, and $\hbar$ is factored out for $\Delta^{-1}$  (i.e., $\hbar \Delta^{-1}= \tilde{\Delta}^{-1}$), we find that the generalized quantum cocycle invariant needs to be multiplied by $\hbar^{-m^++m^-}$ to give the Alexander polynomial. The map $\mu$ introduces a factor of $\hbar^{m-1}$, one for each copy of $\mu$. 
			The details are analogous to the case of the Jones polynomial in Theorem~\ref{thm:Jones}, except the fact that the trace is a partial trace on the entries $2, \ldots, m$ of $V^{\otimes m}$. 
\end{proof}

\end{document}